\DeclareMathOperator{\linSpan}{LinSpan}
\newcommand*{\mc}[1]{\mathcal{#1}}
\newcommand*{\meta}[1]{\ensuremath{\langle\text{\textit{#1}}\rangle}}
\newcommand*{\mf}[1]{\mathfrak{#1}}
\newcommand*{\parenthetic}[1]{\/\textup{(#1)}}
\newcommand*{\pin}[1]{
   \mathchoice{%
      \mathrel{\mathrm{in}}%
   }{%
      \mathrel{\mathrm{in}}%
   }{%
      \mathop{\mathrm{in}}%
   }{%
      \mathop{\mathrm{in}}%
   }#1%
}
\newcommand*{\setOf}[3][\@gobble]{%
   \left\{ \, #2 \,\,\vrule\relax#1.\,\, #3 \, \right\}%
}
\DeclareMathSymbol{\txthyphen}{\mathalpha}{operators}{`\-}
\newcommand{\xfmapsto}[2][1fill]{%
  \sbox\z@{$\scriptstyle #2$}%
  \skip@=\wd\z@ \relax
  \ifdim 0.4em<\skip@
    \addtolength\skip@{0.4em plus #1}%
  \else
    \setlength\skip@{0.8em plus #1}%
  \fi
  \divide\skip@ by 2%
  \;
  \mathord- \mkern-6mu%
  \cleaders\hbox{$\mkern-2mu \mathord- \mkern-2mu$}\hskip\skip@
  \dimen@=1ex%
  \advance\dimen@ \dp\z@
  \raise\dimen@ \hb@xt@\z@{\hss\unhbox\z@\hss}%
  \hb@xt@\z@{\hss$-$\hss}%
  %
  \mkern-6mu
  \cleaders\hbox{$\mkern-2mu \mathord- \mkern-2mu$}\hskip\skip@
  \mkern-6mu \mathord\rightarrow \;
}
\newcommand{\tone}{{
\begin{picture}(2,10)(0,3)
\drawline(1,0)(1,10)
\put(1,10){\circle*{2}}
\end{picture}
}}
\newcommand{\ttwo}{{
\begin{picture}(16,14)(0,5)
\drawline(8,0)(8,6)(0,14)
\drawline(8,6)(16,14)
\put(0,14){\circle*{2}}
\put(16,14){\circle*{2}}
\put(8,6){\circle*{2}}
\end{picture}
}}
\newcommand{\tthreeone}{{
\begin{picture}(16,14)(0,5)
\drawline(8,0)(8,6)(0,14)
\drawline(8,6)(16,14)
\drawline(12,10)(8,14)
\put(0,14){\circle*{2}}
\put(16,14){\circle*{2}}
\put(8,14){\circle*{2}}
\put(8,6){\circle*{2}}
\put(12,10){\circle*{2}}
\end{picture}
}}
\newcommand{\tthreetwo}{{
\begin{picture}(16,14)(0,5)
\drawline(8,0)(8,6)(0,14)
\drawline(4,10)(8,14)
\drawline(8,6)(16,14)
\put(0,14){\circle*{2}}
\put(16,14){\circle*{2}}
\put(8,14){\circle*{2}}
\put(8,6){\circle*{2}}
\put(4,10){\circle*{2}}
\end{picture}
}}
\newcommand{\tfourone}{{
\begin{picture}(18,15)
\drawline(9,-4)(9,2)(0,11)
\drawline(9,2)(18,11)
\drawline(12,5)(6,11)
\drawline(15,8)(12,11)
\put(0,11){\circle*{2}}
\put(18,11){\circle*{2}}
\put(6,11){\circle*{2}}
\put(12,11){\circle*{2}}
\put(9,2){\circle*{2}}
\put(12,5){\circle*{2}}
\put(15,8){\circle*{2}}
\end{picture}
}}
\newcommand{\tfourtwo}{{
\begin{picture}(18,15)
\drawline(9,-4)(9,2)(0,11)
\drawline(9,2)(18,11)
\drawline(12,5)(6,11)
\drawline(9,8)(12,11)
\put(0,11){\circle*{2}}
\put(18,11){\circle*{2}}
\put(6,11){\circle*{2}}
\put(12,11){\circle*{2}}
\put(9,2){\circle*{2}}
\put(12,5){\circle*{2}}
\put(9,8){\circle*{2}}
\end{picture}
}}
\newcommand{\tfourthree}{{
\begin{picture}(18,15)
\drawline(9,-4)(9,2)(0,11)
\drawline(3,8)(6,11)
\drawline(9,2)(18,11)
\drawline(15,8)(12,11)
\put(0,11){\circle*{2}}
\put(18,11){\circle*{2}}
\put(6,11){\circle*{2}}
\put(12,11){\circle*{2}}
\put(9,2){\circle*{2}}
\put(3,8){\circle*{2}}
\put(15,8){\circle*{2}}
\end{picture}
}}
\newcommand{\tfourfour}{{
\begin{picture}(18,15)
\drawline(9,-4)(9,2)(0,11)
\drawline(6,5)(12,11)
\drawline(9,8)(6,11)
\drawline(9,2)(18,11)
\put(0,11){\circle*{2}}
\put(18,11){\circle*{2}}
\put(6,11){\circle*{2}}
\put(12,11){\circle*{2}}
\put(9,2){\circle*{2}}
\put(6,5){\circle*{2}}
\put(9,8){\circle*{2}}
\end{picture}
}}
\newcommand{\tfourfive}{{
\begin{picture}(18,15)
\drawline(9,-4)(9,2)(0,11)
\drawline(3,8)(6,11)
\drawline(6,5)(12,11)
\drawline(9,2)(18,11)
\put(0,11){\circle*{2}}
\put(18,11){\circle*{2}}
\put(6,11){\circle*{2}}
\put(12,11){\circle*{2}}
\put(9,2){\circle*{2}}
\put(6,5){\circle*{2}}
\put(3,8){\circle*{2}}
\end{picture}
}}
\newcommand{\psiwedgephi}{{
\begin{picture}(20,18)(-2,5)
\drawline(8,0)(8,6)(0,14)
\drawline(8,6)(16,14)
\put(8,6){\circle*{2}}
\put(-1,18){\makebox(0,0){$\psi$}}
\put(19,18){\makebox(0,0){$\varphi$}}
\end{picture}}}
\begin{document}

\mathversion{normal2}

\title*{Universal Algebra Applied
  to Hom-Associative Algebras, and More}
  \titlerunning{Universal Algebra Applied
  to Hom-Associative Algebras, and More}

\author{Lars Hellstr\"om \and Abdenacer Makhlouf \and  Sergei D. Silvestrov}
\institute{
  Lars Hellstr\"om \at
    Division of Applied Mathematics,
    The School of Education, Culture and Communication,\\
    M{\"a}lardalen University, Box 883, 721 23 V{\"a}ster{\aa}s,
    Sweden,
    \email{Lars.Hellstrom@residenset.net}
  \and
  Abdenacer Makhlouf \at
    University of Haute Alsace, LMIA,
    4 rue des Fr\`eres Lumi\`ere 68093 Mulhouse, France
    \email{Abdenacer.Makhlouf@uha.fr}
  \and
  Sergei D. Silvestrov \at
    Division of Applied Mathematics,
    The School of Education, Culture and Communication,\\
    M{\"a}lardalen University, Box 883, 721 23 V{\"a}ster{\aa}s,
    Sweden,
    \email{sergei.silvestrov@mdh.se}%
}

\maketitle

\abstract*{
  The purpose of this paper is to discuss the universal algebra 
  theory of hom-algebras. This kind of algebra involves a linear map 
  which twists the usual identities. We focus on hom-associative 
  algebras and hom-Lie algebras for which we review the main results. 
  We discuss the envelopment problem, operads, and the Diamond Lemma; 
  the usual tools have to be adapted to this new situation. Moreover 
  we study Hilbert series for the hom-associative operad and free 
  algebra, and describe them up to total degree equal $8$ and $9$ 
  respectively.}
\abstract{
  The purpose of this paper is to discuss the universal algebra 
  theory of hom-algebras. This kind of algebra involves a linear map 
  which twists the usual identities. We focus on hom-associative 
  algebras and hom-Lie algebras for which we review the main results. 
  We discuss the envelopment problem, operads, and the Diamond Lemma; 
  the usual tools have to be adapted to this new situation. Moreover 
  we study Hilbert series for the hom-associative operad and free 
  algebra, and describe them up to total degree equal $8$ and $9$ 
  respectively.}
%
\section*{Introduction}

Abstract algebra is a subject that may be investigated on many
different levels of maturity. At the most elementary level that still
meets the standards of mathematical rigor, the investigator simply
postulates some set of axioms (usually in the form of a definition)
and then goes on to derive random consequences of these axioms,
hopefully topping it off with examples to illustrate the range of
possible outcomes for the results that are stated (as there have been
some spectacular instances of mathematical theories that died due to
having no nontrivial examples where they were applicable). This level
of investigation may produce a nicely whole theory of something, but
in the hands of an immature investigator it runs a significant risk
of ending up as a random collection of facts that don't combine to
anything greater than themselves; the whole of a good theory should
be greater than the sum of its parts.

One way of reaching a higher level can be to investigate matters
using the techniques of universal algebra, since these combine
looking at concrete examples with the generality of investigating the
generic case. Another way is to employ the language of category
theory to investigate matters on a level that is even more abstract.
Indeed, category theory has become so fashionable that modern
presentations of universal algebra may treat it as a mere application
of the categorical formalism. This has the advantage of allowing
definitions of for example free algebras to be given that do not
presuppose a specific construction machinery, but on the other hand
it runs the risk of losing itself in the heavens of abstraction,
because the difficulties have been postponed rather than taken care
of; doing any nontrivial example may bring them all back with a
vengeance. Therefore we were glad to see how Yau
in~\cite{Yau:EnvLieAlg} would proceed from an abstract categorical
definition to concrete constructions of many free algebras of
relevance to hom-associative and hom-Lie algebras---glad, but also a
bit curious as to why the constructions were not more systematic.

For better or worse, there is probably a simple reason for someone
doing \emph{ad hoc} constructions rather than the standard systematic
ones here: even though the systematic constructions are well known
within the Formal languages, Logic, and Discrete mathematics
communities, they are \emph{not} so within the Algebra community. 
Therefore one aim for us in writing this paper has been to bring to 
the attention of the Algebra community this veritable treasure-trove 
of methods and techniques that universal algebra and formal languages 
have to offer. Another aim was of course to find out more about 
hom-algebras, as what as come so far is only the beginning of the 
exploration of these.


The first motivation to study nonassociative hom-algebras comes from
quasi-deformations of Lie algebras of vector fields, in particular
$q$-deformations of Witt and Virasoro algebras
\cite{AizawaSaito,ChaiElinPop,ChaiKuLukPopPresn,ChaiIsKuLuk,ChaiPopPres,
CurtrZachos1,DaskaloyannisGendefVir, Kassel1,LiuKeQin,Hu}. The
deformed algebras arising in connection with $\sigma$-derivation are
no longer Lie algebras. It was observed in the pioneering works
 that in these
examples a twisted Jacobi identity holds. Motivated by these
examples and their generalisation on the one hand, and the desire to
be able to treat within the same framework such well-known
generalisations of Lie algebras as the color and  Lie superalgebras
on the other hand, quasi-Lie algebras and subclasses of
quasi-hom-Lie algebras and hom-Lie algebras were introduced by
Hartwig, Larsson and Silvestrov in \cite{HLS,LS1,LS2,LS3}.

The hom-associative algebras play the role of associative algebras in
the hom-Lie setting. They were introduced by Makhlouf and Silvestrov
in \cite{MS}. Usual functors between the categories of Lie algebras
and associative algebras were extended to hom-setting, see
\cite{Yau:EnvLieAlg} for the construction of the enveloping algebra of
a hom-Lie algebra. Likewise, many classical structures as alternative,
Jordan, Malcev, graded algebras and $n$-ary algebras of Lie and
associative type, were considered in this framework, see
\cite{AEM,AmmarMakhlouf2009,AMS2008,BenayadiMakhlouf,Canepl2009,
  Mak:Almeria,HomHopf,MakSil-def,HomAlgHomCoalg,Makhlouf-Yau,Sheng,
  Yau:YangBaxter,Yau:comodule,Yau:YangBaxter2,Yau:ClassicYangBaxter,
  Yau:MalsevAlternJordan}.
Notice that Hom-algebras over a PROP were defined and studied
in~\cite{Yau:Prop} and deformations of hom-type of the Associative
operad from the point of view of the confluence property discussed
in~\cite{Iyudu-Makhlouf}.

The main feature of all these algebras is that classical identities
are twisted by a homomorphism. Pictorially, drawing the
multiplication $\mathsf{m}$ as a circle and the linear map $\alpha$ as a
square, \index{hom-associativity} hom-associativity may be written as
\begin{equation}
  \left[ \begin{sdpgf}{0}{0}{120}{-196}{0.2pt}
    \m 60 -191 \L 0 41 \S \m 30 -46 \L 0 41 \S \m 79 -51 \C -12 12 -7
    18 0 16 \S \m 101 -51 \C 14 14 -25 17 0 15 \S \m 49 -123 \C -12 12
    -7 17 0 16 \S \m 71 -123 \C 12 12 7 17 0 16 \S \ov 44 -150 32 32
    \S \re 14 -78 32 32 \S \ov 74 -78 32 32 \S
  \end{sdpgf} \right]
  \equiv
  \left[ \begin{sdpgf}{0}{0}{120}{-196}{0.2pt}
    \m 60 -191 \L 0 41 \S \m 19 -51 \C -14 14 25 17 0 15 \S \m 41 -51
    \C 12 12 7 18 0 16 \S \m 90 -46 \L 0 41 \S \m 49 -123 \C -12 12 -7
    17 0 16 \S \m 71 -123 \C 12 12 7 17 0 16 \S \ov 44 -150 32 32 \S
    \ov 14 -78 32 32 \S \re 74 -78 32 32 \S
  \end{sdpgf} \right]
\end{equation}

In this paper, we summarize the basics of hom-algebras in the first
section. We emphasize on hom-associative and hom-Lie algebras. We show
first the paradigmatic example of $q$-deformation of
$\mathfrak{sl}_2$ using $\sigma$-derivations, leading to an
interesting example of hom-Lie algebra.  We provide the general method
and some other procedures to construct examples of hom-associative or
hom-Lie algebras. We describe the free hom-nonassociative algebra
constructed by Yau. It leads to free hom-associative algebra and to the
enveloping algebra of a hom-Lie algebra.  In
Section~\ref{Sec:ClassicalUA} we recall the basic concepts in
universal algebra as signature $\Omega$, $\Omega$-algebra, formal
terms, normal form, rewriting system, and quotient algebra. We
emphasize on hom-associative algebras and discuss the envelopment
problem. Section~\ref{Sec:OperadUA} is devoted to operadic
approach. We discuss this concept and universal algebra for operads.
We provide a diamond lemma for operads and discuss ambiguities for
symmetric operads. Then we focus on hom-associative algebras operad
for which attempt to resolve the ambiguities.  Likewise we study
congruence modulo hom-associativity and Hilbert series in this case.
Moreover we study Hilbert series for the hom-associative operad 
and compute several dozen terms of it exactly using techniques from 
formal languages (notably regular tree languages).

\section{Hom-algebras: definitions, constructions and examples}
\label{Sec:Hom-algebras}

We summarize in this section the basics about hom-associative
algebras and hom-Lie algebras.

The hom-associative identity \( \alpha(x) \cdot (y \cdot\nobreak z) =
(x \cdot\nobreak y) \cdot \alpha(z)\) is a generalisation of the
ordinary associative identity \( x \cdot (y \cdot\nobreak z) =
(x \cdot\nobreak y) \cdot z\). Study of it could be motivated simply
by the creed that ``one should always generalise'', and in
Subsection~\ref{Ssec:Hilbert} we will briefly consider the view that
hom-associativity (in a rather abstract setting) can be considered as
homogenisation of ordinary associativity, but historically the
hom-associative identity was first suggested by an application; the
line of thought went from $\sigma$-derivations, then to hom-\emph{Lie}
algebras, before finally touching upon hom-associative algebras. We
sketch the $\sigma$-derivation development in the first subsection
below, but the rest of the text does not depend on the material
presented there, so the reader who prefers to skip to
Subsection~\ref{Ssec:Hom-ass-examples} now should have no problem
doing so.

\subsection{$q$-Deformations and $\sigma$-derivations}
\label{Ssec:sigma-derivations}

Let $A$ be an associative $\mathbb{K}$-algebra with unity $1$.
Let $\sigma$ be an endomorphism on $A$. By a  \index{twisted derivation} \emph{twisted derivation}
or \index{$\sigma$-derivation} \emph{$\sigma$-derivation} on $A$, we mean a $\mathbb{K}$-linear map
\(\Delta\colon A \longrightarrow A\) such that a $\sigma$-twisted
product rule (Leibniz rule) holds:
\begin{equation}\label{eq:sigmaLeibniz}
  \Delta(ab) = \Delta(a) b + \sigma(a) \Delta(b)
  \text{.}
\end{equation}
The ordinary derivative $(\partial\,a)(t) = a'(t)$ on the polynomial
ring \(A = \mathbb{K}[t]\) is a $\sigma$-derivation for \(\sigma = \mathrm{id}\). If
on a superalgebra \(A = A_0 \oplus A_1\) one defines \(\sigma(a) = a\)
for \(a \in A_0\) but \(\sigma(a) = -a\) for \(a \in A_1\), then
\eqref{eq:sigmaLeibniz} precisely captures the parity adjustments of
the product rule that derivations in such settings tend to exhibit,
and it does so in a manner that unifies the even and odd cases.
Returning to the the polynomial ring \(A = \mathbb{K}[t]\), the
$\sigma$-derivation concept offers a unified framework for various
derivation-\emph{like} operators, perhaps most famously the Jackson
$q$-derivation operator \((D_q a)(t) = \frac{1}{(q-1)t} \bigl( a(qt)
-\nobreak a(t) \bigr)\) for some \(q \in \mathbb{K}\), that has the ordinary
derivative as the \(q \to 1\) limit and the product rule \(D_q(ab)(t)
= D_q(a)(t) \, b(t) + a(qt) \, D_q(b)(t)\); this is thus a
$\sigma$-derivation for \(\sigma(a)(t) = a(qt)\), which acts on the
standard basis for $\mathbb{K}[t]$ as \(\sigma(t^n) = q^n t^n\).
(See~\cite{HS} and references therein.)

The big algebraic insight about derivations is that they form Lie
algebras, from which one can go on to universal enveloping algebras
and exploit the connections to formal groups and Lie groups. What
about twisted derivations, then? A quick calculation will reveal that
they do not form a Lie algebra in the usual way, but there can still
be a Lie-algebra-like structure on them.

We let $\mathfrak{D}_\sigma(A)$ denote the set of $\sigma$-derivations
on $A$. As with vector fields in differential geometry, one may
define the product of some \(a \in A\) and \(\Delta \in
\mathfrak{D}_\sigma(A)\) to be the \(a \cdot \Delta \in
\mathfrak{D}_\sigma(A)\) defined by \((a \cdot\nobreak \Delta)(b) = a
\, \Delta(b)\) for all \(b \in A\); hence $\mathfrak{D}_\sigma(A)$ can
be regarded as a left $A$-module.
The \emph{annihilator} $\mathrm{Ann}(\Delta)$ of some \(\Delta \in
\mathfrak{D}(A)\) is the set of all \(a \in A\) such that \(a \cdot
\Delta = 0\). By~\cite[Th.~4]{HLS}, if $A$ is a commutative unique
factorisation domain then $\mathfrak{D}_\sigma(A)$ is as a left
$A$-module free and of rank one, which lets us use the following
construction to exhibit a Lie-algebra-like structure on
$\mathfrak{D}_\sigma(A)$.

\begin{theorem}[{\cite[Th.~5]{HLS}}] \label{theorem:GenWitt}
  Let $A$ be a commutative associative $\mathbb{K}$-algebra with unit $1$ and
  let \(\sigma\colon A\longrightarrow A\) be an algebra homomorphism other than
  the identity map. Fix some \(\Delta \in \mathfrak{D}_\sigma(A)\)
  such that \(\sigma\bigl( \mathrm{Ann}(\Delta) \bigr) \subseteq
  \mathrm{Ann}(\Delta)\). Define a binary operation
  $[\cdot,\cdot]_\sigma$ on the left $A$-module $A \cdot \Delta$ by
  \begin{equation} \label{eq:GenWittProdDef}
    [ a\cdot \Delta, b\cdot \Delta]_\sigma =
    (\sigma(a)\cdot \Delta) \circ (b\cdot \Delta)
    - (\sigma(b)\cdot \Delta) \circ (a\cdot \Delta)
    \quad\text{for all \(a,b \in A\),}
  \end{equation}
  where $\circ$ denotes composition of functions. This operation is
  well-defined and satisfies the two identities
  \begin{align}\label{eq:GenWittProdFormula}
    [ a\cdot \Delta, b\cdot \Delta]_\sigma ={}&
      (\sigma(a) \Delta(b) - \sigma(b) \Delta(a))
      \cdot \Delta
      \text{,}\\
    [ b\cdot \Delta, a\cdot \Delta]_\sigma ={}&
      - [a\cdot \Delta, b\cdot \Delta]_\sigma
      \label{eq:GenWittSkew}
  \end{align}
  for all \(a,b \in A\). If there in addition is some \(\delta \in
  A\) such that
  \begin{equation}
    \Delta\bigl( \sigma(a) \bigr) =
    \delta \sigma\bigl( \Delta(a) \bigr)
    \qquad\text{for all \(a\in A\),}
      \label{eq:GenWittCond2}
  \end{equation}
  then $[\cdot,\cdot]_\sigma$ satisfies the deformed six-term Jacobi
  identity
  \begin{equation} \label{eq:GenWittJacobi}
    \circlearrowleft_{a,b,c} \, \bigl(
      [\sigma(a)\cdot \Delta,
        [b\cdot \Delta, c\cdot \Delta]_\sigma
      ]_\sigma
      + \delta\cdot[ a\cdot \Delta,
        [b\cdot \Delta, c\cdot \Delta]_\sigma
      ]_\sigma
    \bigr) = 0
  \end{equation}
  for all \(a,b,c \in A\).
\end{theorem}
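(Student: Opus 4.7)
The plan is to verify every assertion by direct calculation inside the ambient endomorphism algebra $\mathrm{End}_{\mathbb{K}}(A)$ containing $A\cdot\Delta$, taking the explicit product formula \eqref{eq:GenWittProdFormula} as the engine that drives the remaining claims.

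First I would evaluate $(\sigma(a)\cdot \Delta)\circ(b\cdot \Delta)$ at an arbitrary $c\in A$, using the definition of the left $A$-module action $(a\cdot\Delta)(b)=a\Delta(b)$ together with the $\sigma$-Leibniz rule \eqref{eq:sigmaLeibniz}. This produces $\sigma(a)\Delta(b)\Delta(c) + \sigma(a)\sigma(b)\Delta^2(c)$. Swapping $a\leftrightarrow b$ and subtracting, the ``second-order'' $\Delta^2$ contributions cancel because $A$ is commutative, and what remains is exactly $\bigl(\sigma(a)\Delta(b)-\sigma(b)\Delta(a)\bigr)\Delta(c)$. This simultaneously establishes \eqref{eq:GenWittProdFormula} and shows the bracket lands inside $A\cdot\Delta$. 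The skew-symmetry \eqref{eq:GenWittSkew} is then immediate from the formula. Well-definedness amounts to checking that replacing $a$ by $a+r$ with $r\in \mathrm{Ann}(\Delta)$ does not alter the right-hand side of \eqref{eq:GenWittProdFormula} as an operator: applying $\Delta$ to the identity $r\cdot\Delta(c)=0$ and using Leibniz together with the hypothesis $\sigma(r)\in \mathrm{Ann}(\Delta)$ yields $\Delta(r)\Delta(c)=0$ for all $c$, while the $\sigma(r)\Delta(b)\cdot\Delta$ contribution vanishes directly from $\sigma(r)\in \mathrm{Ann}(\Delta)$ after a commutativity-driven rearrangement of factors.

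The real obstacle is the six-term deformed Jacobi identity \eqref{eq:GenWittJacobi}. Here I would apply \eqref{eq:GenWittProdFormula} twice: first to rewrite the inner bracket as $[b\cdot\Delta,c\cdot\Delta]_\sigma = \omega_{b,c}\cdot\Delta$ with $\omega_{b,c}:=\sigma(b)\Delta(c)-\sigma(c)\Delta(b)$, then again to expand each outer bracket $[\sigma(a)\cdot\Delta,\omega_{b,c}\cdot\Delta]_\sigma$ and $[a\cdot\Delta,\omega_{b,c}\cdot\Delta]_\sigma$. Expanding $\Delta(\omega_{b,c})$ by the $\sigma$-Leibniz rule produces terms involving $\Delta(\sigma(b))$ and $\Delta(\sigma(c))$, and at each such occurrence I would invoke condition \eqref{eq:GenWittCond2} to substitute $\Delta(\sigma(x))=\delta\,\sigma(\Delta(x))$. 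After this substitution, the coefficient of $\Delta$ in the summand inside $\circlearrowleft_{a,b,c}$ becomes a sum of monomials in $\sigma^i(a),\sigma^i(b),\sigma^i(c),\Delta(a),\Delta(b),\Delta(c)$, each carrying a definite power of $\delta$ (namely $0$, $1$, or $2$).

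The cancellation is then a bookkeeping exercise: group the monomials by their power of $\delta$ and by the pattern of $\sigma$-compositions, and check that under the cyclic permutation $(a,b,c)\mapsto(b,c,a)\mapsto(c,a,b)$ each monomial acquires a cyclically shifted companion of opposite sign, with commutativity of $A$ commuting the coefficients into the required matching order. The main difficulty is not the individual manipulations but organising the expansion so that the threefold cyclic cancellation becomes transparent; strategically I would segregate the expansion according to whether each term originates from the ``$\sigma^2(a)\Delta(\omega_{b,c})$'', ``$\sigma(\omega_{b,c})\Delta(\sigma(a))$'', ``$\delta\,\sigma(a)\Delta(\omega_{b,c})$'', or ``$\delta\,\sigma(\omega_{b,c})\Delta(a)$'' channel, so that the matching of terms within each bucket can be read off mechanically.
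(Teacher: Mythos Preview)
The paper does not actually contain a proof of this theorem; it is quoted verbatim from \cite[Th.~5]{HLS} and no argument is supplied here. Your proposal is the standard direct-computation proof one finds in that reference: expand the composition using the $\sigma$-Leibniz rule to obtain \eqref{eq:GenWittProdFormula}, deduce skew-symmetry and closure immediately, handle well-definedness via the annihilator hypothesis, and then grind out the six-term Jacobi identity by two applications of the product formula together with the substitution $\Delta\circ\sigma=\delta\cdot(\sigma\circ\Delta)$ from \eqref{eq:GenWittCond2}. The outline is correct; the well-definedness step in particular is argued properly (the observation that $\Delta(r)\in\mathrm{Ann}(\Delta)$ whenever $r\in\mathrm{Ann}(\Delta)$ and $\sigma(r)\in\mathrm{Ann}(\Delta)$ is exactly what is needed), and the organisation of the Jacobi expansion into the four ``channels'' you name is a sensible way to track the cyclic cancellation.
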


The algebra $A\cdot \Delta$ in the theorem is then a
quasi-hom-Lie algebra with, in the notation of~\cite{LS1},
\(\alpha(a \cdot\nobreak \Delta) = \sigma(a) \cdot \Delta\),
\(\beta(a \cdot\nobreak \Delta) = (\delta a) \cdot \Delta\), and
$\omega = -\mathrm{id}_{A\cdot\Delta}$. For \(\delta \in \mathbb{K}\), as is the case
with \(\Delta = D_q\), \eqref{eq:GenWittJacobi} further simplifies to
the deformed three-term Jacobi identity \eqref{Eq:HomJacobi} of a
hom-Lie algebra.

As example of how the method in Theorem~\ref{theorem:GenWitt} ties in
with more basic deformation approaches, we review the results
in~\cite{LS2,LS3} concerned with this quasi-deformation scheme when
applied to the simple Lie algebra $\mathfrak{sl}_2(\mathbb{K})$.
Recall that the Lie algebra $\mathfrak{sl}_2(\mathbb{K})$ can be realized as a
vector space generated by elements $H$, $E$ and
$F$ with the bilinear bracket product defined by
the relations
\begin{align}\label{eq:sl2}
  [H,E] ={}& 2E \text{,}&
  [H,F] ={}& -2F \text{,} &
  [E,F] ={}& H \text{.}
\end{align}
A basic starting point is the following representation of $\mathfrak{sl}_2(\mathbb{K})$
in terms of first order differential operators acting on some vector
space of functions in a variable $t$:
\begin{align*} 
  E \mapsto{}& \partial \text{,}&
  H \mapsto{}& -2t\partial \text{,}&
  F \mapsto{}& -t^2\partial \text{.}
\end{align*}
To quasi-deform $\mathfrak{sl}_2(\mathbb{K})$ means that we firstly replace $\partial$ by
some twisted derivation $\Delta$ in this representation. At our
disposal as deformation parameters are now $A$ (the ``algebra of
functions'') and the endomorphism $\sigma$. After computing the bracket
on $A \cdot \Delta$ by Theorem~\ref{theorem:GenWitt} the relations in
the quasi-Lie deformation are obtained by pullback.

Let $A$ be a commutative, associative $\mathbb{K}$-algebra with unity $1$,
let $t$ be an element of $A$, and let $\sigma$ denote a $\mathbb{K}$-algebra
endomorphism on $A$. As above, $\mathfrak{D}_\sigma(A)$ denotes the
linear space of $\sigma$-derivations on $A$. Choose an element
$\Delta$ of $\mathfrak{D}_\sigma(A)$ and consider the
$\mathbb{K}$-subspace $A \cdot \Delta$ of elements on the form
$a\cdot \Delta$ for $a\in A$.
The elements \(e := \Delta\), \(h := -2t \cdot \Delta\), and
\(f := -t^2 \cdot \Delta\) span a $\mathbb{K}$-linear subspace
\[
  \mathcal{S} :=
  \linSpan_\mathbb{K} \{ \Delta, -2t \cdot \Delta, -t^2 \cdot \Delta\} =
  \linSpan_\mathbb{K}\{e,h,f\}
\]
of $A \cdot \Delta$. We restrict the multiplication
\eqref{eq:GenWittProdFormula} to $\mathcal{S}$ without, at this point,
assuming closure. Now, \(\Delta(t^2) = \Delta(t \cdot\nobreak t) =
\sigma(t) \Delta(t) + \Delta(t) t =
\bigl( \sigma(t) +\nobreak t \bigr) \Delta(t) \).
Under the natural (see~\cite{LS2}) assumptions $\sigma(1) = 1$ and
$\Delta(1) = 0$, \eqref{eq:GenWittProdFormula} leads to
\begin{subequations}
\begin{align}
  [h,f] ={}& 2\sigma(t) t \Delta(t) \cdot \Delta
    \text{,} \label{eq:Shfsimp}\\
  [h,e] ={}& 2 \Delta(t) \cdot \Delta
    \text{,} \label{eq:Shesimp}\\
  [e,f] ={}& -\bigl( \sigma(t) + t \bigr) \Delta(t) \cdot \Delta
    \text{,} \label{eq:Sefsimp}
\end{align}
\end{subequations}
hence as long as $\sigma$ and $\Delta$, similarly to their untwisted
counterparts, yield that the degrees of $t$ in the expressions on the right hand side remain among
those present in the generating set for the $\mathbb{K}$-linear subspace $\mathcal{S}$, it
follows that $\mathcal{S}$ indeed is closed under this bracket.


In the particular case that \(\sigma(t) = qt\) for some \(q \in \mathbb{K}\)
and \(\Delta = D_q\), we obtain a family of hom-Lie algebras deforming
$\mathfrak{sl}_2$, defined with respect to the basis $\{e,f,h\}$ by
the brackets and the linear map $\alpha$ as follows:
\begin{subequations} \label{Eq:q-def-sl2}
\begin{align}
  [h, f] &= -2q f \text{,} &
    \alpha(f) &= q^2 f\text{,} \\
  [h, e] &= 2 e\text{,} &
    \alpha(e) &= q e \text{,} \\
  [e, f] &= \tfrac{1}{2}(1+q) h \text{,}&
    \alpha(h) &= q h \text{.}
\end{align}
\end{subequations}
This is a hom-Lie algebra for all \(q \in \mathbb{K}\) but not a Lie algebra
unless $q=1$, in which case we recover the classical $\mathfrak{sl}_2$.

\subsection{Hom-algebras: Lie and associative}
\label{Ssec:Hom-ass-examples}

An ordinary Lie or associative algebra may informally be described as
an underlying linear space (often assumed to be a vector space, but we
will typically allow it to be a more general module) on which is
defined some bilinear map $m$ called the \emph{multiplication} (or in
the Lie case sometimes the \emph{bracket}). Depending on what
identities this multiplication satisfies, the algebra is classified
as being associative, commutative, anticommutative, Lie, etc.
A \emph{hom-algebra} may similarly be described as an underlying
linear space on which is defined two maps $m$ and $\alpha$. The
multiplication $m$ is again required to be bilinear, whereas $\alpha$
is merely a linear map from the underlying set to itself. The `hom-'
prefix is historically because $\alpha$ in many examples turn out to
be a \emph{homomorphism} with respect to some operation (not
necessarily the $m$ of the hom-algebra, even though that is certainly
not uncommon), but the modern understanding is that $\alpha$ may be
any linear map.


Practically, the point of incorporating some extra map $\alpha$ in
the definition of an algebra is that this can be used to ``twist'' or
``deform'' the identities defining a variety of algebras, and thus
offer greater opportunities for capturing within an abstract axiomatic
framework the many concrete ``twisted'' or ``deformed'' algebras that
have emerged in recent decades. It was shown in~\cite{HLS} that
hom-Lie algebras are closely related to discrete
and deformed vector fields and differential calculus and that some
$q$-deformations of the Witt and the Virasoro algebras have the
structure of a hom-Lie algebra. The paradigmatic example (given above)
is the $\mathfrak{sl}_2$ Lie algebra which deforms to a new nontrivial
hom-Lie algebra by means of $\sigma$-derivations.
Hom-associative algebras are likewise a
generalisation of a usual associative algebras. A common recipe for
producing the hom-analogue of a classical identity is to insert
$\alpha$ applications wherever some variable is not acted upon by $m$
as many times as the others.

\begin{definition} \label{Def:hom-algebra}
  Let $\mathcal{R}$ be some associative and commutative unital ring.
  Formally, \index{hom-algebra} an \emph{$\mathcal{R}$-hom-algebra} $\mathcal{A}$ is a
  triplet $(A, m, \alpha)$, where $A$ is an $\mathcal{R}$-module,
  \(m\colon A \times A \longrightarrow A\) is a bilinear map, and
  \(\alpha\colon A \longrightarrow A\) is a linear map. As usual, the algebra
  $\mathcal{A}$ and its carrier set $A$ are notationally identified
  whenever there is no risk of confusion.

  The \emph{hom-associative identity} for $\mathcal{A}$ is the
  formula
  \begin{equation} \label{Eq:hom-associativity}
    m\bigl( \alpha(x), m(y,z) \bigr) =
    m\bigl( m(x,y), \alpha(z) \bigr)
    \quad\text{for all \(x,y,z \in \mathcal{A}\).}
  \end{equation}
  A hom-algebra which satisfies the hom-associative identity is said
  to be \index{hom-associative algebra} a \emph{hom-associative} algebra. Similarly, \index{hom-Jacobi identity}
  the \emph{hom-Jacobi identity} for $\mathcal{A}$ is the formula
  \begin{equation} \label{Eq:HomJacobi}
    m\bigl( \alpha(x), m(y,z) \bigr) +
    m\bigl( \alpha(y), m(z,x) \bigr) +
    m\bigl( \alpha(z), m(x,y) \bigr) = 0
    \quad\text{for all \(x,y,z \in \mathcal{A}\).}
  \end{equation}
  For a hom-algebra $\mathcal{A}$ to be \index{hom-Lie algebra} a \emph{hom-Lie algebra},
  it must satisfy the hom-Jacobi identity and the ordinary
  anticommutativity (skew-symmetry) identity
  \begin{equation} \label{Eq:anticommutative}
    m(x,x) = 0
    \quad\text{for all \(x \in \mathcal{A}\).}
  \end{equation}
  A hom-algebra $\mathcal{A}$ is said to be \index{multiplicative hom-algebra} \emph{multiplicative} if
  $\alpha$ is an endomorphism of the algebra $(A,m)$, i.e., if
  \begin{equation}
    m\bigl( \alpha(x), \alpha(y) \bigr) =
    \alpha\bigl( m(x,y) \bigr)
    \quad\text{for all \(x,y \in \mathcal{A}\).}
  \end{equation}

  Now let \(\mathcal{A} = (A,m,\alpha)\) and
  \(\mathcal{A}' = (A',m',\alpha')\) be two hom-algebras.
  A \emph{morphism} \(f\colon \mathcal{A} \longrightarrow \mathcal{A}'\) of
  hom-algebras is a linear map \(f\colon A \longrightarrow A'\) such that
  \begin{align}
    m\bigl( f(x), f(y) \bigr) ={}& f\bigl( m(x,y) \bigr)
     &&\text{for all \(x,y \in A\),}
     \label{Eq1:WeakMorphism}\\
   \alpha\bigl( f(x) \bigr) ={}& f\bigl( \alpha(x) \bigr)
     &&\text{for all \(x \in A\).}
  \end{align}
  A linear map \(f\colon A \longrightarrow A'\) that merely satisfies the
  first condition \eqref{Eq1:WeakMorphism} is called a
  \emph{weak morphism} of hom-algebras.
\end{definition}

The concept of weak morphism is somewhat typical of the classical
algebra attitude towards hom-algebras: the multiplication $m$ is
taken as part of the core structure, whereas the map $\alpha$ is seen
more as an add-on. In both universal algebra and the categorical
setting, it is instead natural to view $m$ and $\alpha$ as equally
important for the hom-algebra concept, even though it is of course
also possible to treat weak morphisms (for example with the help
of a suitable forgetful functor) within these settings, should weak
morphisms turn out to be of interest for the problems at hand.
Yau~\cite{Yau:EnvLieAlg} goes one step in the opposite direction and
considers hom-algebras as being hom-modules with a multiplication;
this makes $\alpha$ part of the core structure whereas $m$ is the
add-on.

As usual, the squaring form \eqref{Eq:anticommutative} of the
anticommutative identity implies the more traditional
\begin{equation} \label{Eq2:anticommutative}
    m(x,y) = - m(y,x)
    \quad\text{for all \(x,y \in \mathcal{A}\)}
\end{equation}
in any hom-algebra $\mathcal{A}$. The two are equivalent in an
algebra over a field of characteristic $\neq 2$, but
\eqref{Eq2:anticommutative} implies nothing about $m(x,x)$ in an
algebra over a field of characteristic equal to $2$, and for
hom-algebras over other rings more intermediate outcomes are
possible.

An example of a hom-Lie algebra was given in the previous subsection.
A similar example of a hom-associative algebra would be:

\begin{example}\label{example1ass}
  Let $\{e_1,e_2,e_3\}$ be a basis of a $3$-dimensional linear space
  $A$ over some field $\mathbb{K}$. Let \(a,b \in \mathbb{K}\) be arbitrary
  parameters. The following equalities
  \begin{gather*}
    \begin{aligned}
       m(e_1,e_1) &= a\,e_1 \text{,} &
       m(e_2,e_2) &= a\,e_2 \text{,} \\
       m(e_1,e_2) &= m(e_2,e_1) = a\,e_2 \text{,}&
       m(e_2,e_3) &= b\,e_3 \text{,}\\
       m(e_1,e_3) &= m(e_3,x_1) = b\,e_3 \text{,}&
       m(e_3,e_2) &= m(e_3,e_3) = 0 \text{,}
    \end{aligned}
    \\
    \alpha(e_1) = a\,e_1 \text{,} \quad
    \alpha(e_2) = a\,e_2 \text{,} \quad
    \alpha(e_3) = b\,e_3 \text{,}
  \end{gather*}
  define the multiplication $m$ and linear map $\alpha$ on a
  hom-associative algebra on $\mathbb{K}^3$. This algebra is not
  associative when $a\neq b$ and $b\neq 0$, since
  \( m\bigl( m(e_1,e_1), e_3 \bigr) - m\bigl( e_1, m(e_1,e_3) \bigr)
  = (a -\nobreak b) b e_3\).
\end{example}

\begin{example}[Polynomial hom-associative algebra \cite{Yau:homology}]
  Consider the polynomial algebra $A=\mathbb{K} [x_1,\cdots x_n]$ in $n$
  variables. Let $\alpha$ be an algebra endomorphism of $A$ which is
  uniquely determined by the $n$ polynomials $\alpha(x_i) =
  \sum \lambda_{i;r_1,\dotsc,r_n} x^{r_1}_1 \cdots x^{r_n}_n$ for
  \(1\leqslant i \leqslant n\). Define $m$ by
  \begin{equation}
    m(f,g) =
    f(\alpha(x_1), \dotsc, \alpha (x_n))
    g(\alpha(x_1), \dotsc, \alpha (x_n))
  \end{equation}
  for $f,g$ in $A$.
  Then $(A, m, \alpha)$ is a hom-associative algebra.
  (This example is a special case of Corollary~\ref{cor1:twist}.)
\end{example}

\begin{example}[\cite{Yau:comodule}]
  Let $(A,m,\alpha)$ be a hom-associative $\mathcal{R}$-algebra.
  Denote by $\mathrm{M}_n(A)$ the $\mathcal{R}$-module of
  $n\times n$ matrices with entries in $A$. Then
  $(\mathrm{M}_n(A),m',\alpha')$ is also a hom-associative algebra,
  in which \(\alpha'\colon \mathrm{M}_n(A) \longrightarrow \mathrm{M}_n(A)\) is
  the map that applies $\alpha$ to each matrix element and the
  multiplication $m'$ is the ordinary matrix multiplication over
  $(A,m)$.
\end{example}

The following result states that hom-associative algebra yields
another hom-associative algebra when its multiplication and twisting
map are twisted by a morphism. The following results work as well for
hom-Lie algebras and more generally $G$-hom-associative algebras.
These constructions introduced in~\cite{Yau:homology} and generalized
in~\cite{Yau:MalsevAlternJordan} were extended to many other algebraic
structures.

\begin{theorem}\label{thm:firsttwist}
  Let \(\mathcal{A} = (A, m, \alpha)\) be a hom-algebra and
  \(\beta \colon \mathcal{A} \longrightarrow \mathcal{A}\) be a weak morphism.
  Then \(\mathcal{A}_\beta = (A, m_\beta, \alpha_\beta)\) where
  \(m_\beta = \beta \circ m\) and \(\alpha_\beta = \beta \circ \alpha\)
  is also a hom-algebra. Furthermore:
  \begin{enumerate}
    \item
      If $\mathcal{A}$ is hom-associative then $\mathcal{A}_\beta$ is
      hom-associative.
    \item
      If $\mathcal{A}$ is hom-Lie then $\mathcal{A}_\beta$ is hom-Lie.
    \item
      If $\mathcal{A}$ is multiplicative and $\beta$ is a morphism then
      $\mathcal{A}_\beta$ is multiplicative.
  \end{enumerate}
\end{theorem}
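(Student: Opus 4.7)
The overall strategy is to exploit the weak-morphism identity $m\bigl(\beta(x),\beta(y)\bigr)=\beta\bigl(m(x,y)\bigr)$ to ``factor $\beta$ outward'' through every occurrence of $m_\beta=\beta\circ m$. After this normalization, each of the hom-identities in $\mathcal{A}_\beta$ will reduce to the corresponding identity in $\mathcal{A}$ with some fixed power of $\beta$ applied on the outside, so that the claims follow from the assumed identities in $\mathcal{A}$.

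First I would verify hom-associativity. Unfolding the definitions,
\begin{equation*}
  m_\beta\bigl(\alpha_\beta(x),m_\beta(y,z)\bigr)
  = \beta\Bigl(m\bigl(\beta(\alpha(x)),\,\beta(m(y,z))\bigr)\Bigr),
\end{equation*}
and one further application of the weak-morphism identity collapses this to $\beta^2\bigl(m(\alpha(x),m(y,z))\bigr)$. A symmetric computation yields $\beta^2\bigl(m(m(x,y),\alpha(z))\bigr)$ for the right-hand side. Since $\mathcal{A}$ is hom-associative, the two inner expressions agree and applying $\beta^2$ preserves the equality, establishing item~1. The same pattern handles the hom-Jacobi identity in item~2: every term in \eqref{Eq:HomJacobi} with $m_\beta$ and $\alpha_\beta$ reduces to $\beta^2$ applied to the corresponding term for $\mathcal{A}$, and linearity of $\beta$ turns the hom-Jacobi identity of $\mathcal{A}$ into that of $\mathcal{A}_\beta$. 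The anticommutativity is even simpler: $m_\beta(x,x)=\beta(m(x,x))=\beta(0)=0$.

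For item~3, assume $\mathcal{A}$ is multiplicative and $\beta$ is a (full) morphism, so that in addition $\beta\circ\alpha=\alpha\circ\beta$. Then
\begin{equation*}
  m_\beta\bigl(\alpha_\beta(x),\alpha_\beta(y)\bigr)
  = \beta\Bigl(m\bigl(\beta(\alpha(x)),\beta(\alpha(y))\bigr)\Bigr)
  = \beta^2\bigl(m(\alpha(x),\alpha(y))\bigr)
  = \beta^2\bigl(\alpha(m(x,y))\bigr),
\end{equation*}
using the weak-morphism identity once and the multiplicativity of $\mathcal{A}$ once, while $\alpha_\beta\bigl(m_\beta(x,y)\bigr)=\beta\bigl(\alpha(\beta(m(x,y)))\bigr)$; these two sides coincide precisely because $\beta\circ\alpha=\alpha\circ\beta$.

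The argument is essentially routine once the factoring strategy is in place, so I do not expect a real obstacle; the one subtle point worth emphasizing is why the full morphism hypothesis is needed in item~3 but not in items~1 and~2. In the first two parts, $\alpha$ only appears inside the inner expression of $m$, and $\beta$ only appears on the outside after normalization, so the relative order of $\alpha$ and $\beta$ never matters. In item~3, by contrast, the multiplicativity identity forces $\alpha$ to appear outside of $m$, which on the right-hand side sandwiches $\alpha$ between two $\beta$'s; swapping them to match the left-hand side is precisely the extra condition that distinguishes a morphism from a weak morphism.
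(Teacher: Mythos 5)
Your proof is correct and follows essentially the same route as the paper's: factor $\beta$ outward via the weak-morphism identity so that each side of every hom-identity for $\mathcal{A}_\beta$ becomes $\beta^{\circ 2}$ of the corresponding side for $\mathcal{A}$, and invoke $\beta\circ\alpha=\alpha\circ\beta$ only for the multiplicativity claim. Your closing remark on why the full morphism hypothesis is needed only in item~3 is a nice clarification but does not change the argument.
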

\begin{proof}
  For the hom-associative and hom-Jacobi identities, it suffices to
  consider what a typical term in these identities looks like. We have
  \begin{multline*}
    m_\beta\bigl( \alpha_\beta(x), m_\beta(y,z) \bigr) =
    (\beta \circ m)\bigl( (\beta \circ \alpha)(x),
      (\beta \circ m)(y,z) \bigr)
      = \\ =
    \beta\Bigl( (m \circ \beta\otimes\beta)\bigl(
      \alpha(x), m(y,z) \bigr) \Bigr) =
    \beta\Bigl( (\beta \circ m)\bigl( \alpha(x), m(y,z) \bigr) \Bigr)
      = \\ =
    (\beta \circ \beta)\Bigl( m\bigl( \alpha(x), m(y,z) \bigr) \Bigr)
  \end{multline*}
  Hence either side of the hom-associative and hom-Jacobi respectively
  identities for $\mathcal{A}_\beta$ comes out as $\beta^{\circ 2}$ of
  the corresponding side of the corresponding identity for
  $\mathcal{A}$, and thus these identities for $\mathcal{A}_\beta$
  follow directly from their $\mathcal{A}$ counterparts. The
  anticommutativity identity similarly follows from its counterpart,
  as does the multiplicative identity via
  \begin{multline*}
    m_\beta\bigl( \alpha_\beta(x), \alpha_\beta(y) \bigr) =
    \beta\Bigl( m\bigl( \beta\bigl( \alpha(x),
      \beta\bigl( \alpha(y) \bigr) \bigr) \Bigr) =
    \beta^{\circ 2}\Bigl(
      m\bigl( \alpha(x), \alpha(y) \bigr) \Bigr) = \\ =
    \beta^{\circ 2}\Bigl( \alpha\bigl( m(x,y) \bigr) \Bigr) =
    \beta\Bigl( \alpha\bigl( (\beta \circ m)(x,y) \bigr) \Bigr) =
    \alpha_\beta\bigl( m_\beta(x,y) \bigr)
  \end{multline*}
  for all \(x,y \in A\).
\end{proof}

The \(\alpha = \mathrm{id}\) special case of Theorem~\ref{thm:firsttwist}
yields.

\begin{corollary} \label{cor1:twist}
  Let $(A,m)$ be an associative algebra and $\beta\colon A \longrightarrow A$ be
  an algebra endomorphism. Then $A_\beta = (A, m_\beta, \beta)$ where
  \(m_\beta = \beta \circ m\) is a multiplicative hom-associative
  algebra.
\end{corollary}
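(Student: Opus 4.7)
The plan is to view the given associative algebra as a special case of the hom-algebra set-up and then invoke Theorem~\ref{thm:firsttwist} directly. Concretely, I would start by promoting $(A,m)$ to the hom-algebra $\mathcal{A} = (A, m, \mathrm{id}_A)$. This is a hom-algebra since $\mathrm{id}_A$ is linear; it is hom-associative because the identity \eqref{Eq:hom-associativity} with $\alpha = \mathrm{id}_A$ collapses to the ordinary associative law which $m$ satisfies; and it is multiplicative because $m(\mathrm{id}_A(x), \mathrm{id}_A(y)) = m(x,y) = \mathrm{id}_A(m(x,y))$.

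Next I would check that $\beta$, regarded as a linear self-map of $\mathcal{A}$, is in fact a morphism of hom-algebras (not merely a weak morphism). The first morphism condition \eqref{Eq1:WeakMorphism}, namely $m(\beta(x),\beta(y)) = \beta(m(x,y))$, is exactly the hypothesis that $\beta$ is an algebra endomorphism of $(A,m)$. The second condition, $\mathrm{id}_A(\beta(x)) = \beta(\mathrm{id}_A(x))$, is tautological. So $\beta$ is a bona fide morphism of $\mathcal{A}$ to itself, and in particular a weak morphism.

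With these observations in place I would appeal to Theorem~\ref{thm:firsttwist}: the construction $\mathcal{A}_\beta = (A, m_\beta, \alpha_\beta)$ with $m_\beta = \beta \circ m$ and $\alpha_\beta = \beta \circ \mathrm{id}_A = \beta$ recovers precisely the triple $(A, m_\beta, \beta)$ in the statement of the corollary. Part~(1) of that theorem then yields hom-associativity of $(A, m_\beta, \beta)$ from the hom-associativity of $\mathcal{A}$, and part~(3) yields multiplicativity from the multiplicativity of $\mathcal{A}$ together with the fact (just established) that $\beta$ is a morphism.

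There is really no obstacle here; the work is entirely in identifying the right specialisation so that Theorem~\ref{thm:firsttwist} applies verbatim. The only point requiring a moment's care is to notice that for part~(3) one needs $\beta$ to be a morphism rather than only a weak morphism, but this is immediate from the triviality of the second morphism condition when $\alpha = \mathrm{id}_A$.
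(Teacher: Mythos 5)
Your proposal is correct and is exactly the paper's argument: the paper derives this corollary as the $\alpha = \mathrm{id}$ special case of Theorem~\ref{thm:firsttwist}, which is precisely the specialisation you carry out (your version merely spells out the verification that $\beta$ is a genuine morphism, which the paper leaves implicit).
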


That result also has the following partial converse.

\begin{corollary}[\cite{Gohr}]
  Let \(\mathcal{A} = (A,m,\alpha)\) be a multiplicative hom-algebra
  in which $\alpha$ is invertible. Then \(\mathcal{A}' =
  (A, \alpha^{-1} \circ\nobreak m, \mathrm{id})\) is a hom-algebra. In
  particular, any multiplicative hom-associative or hom-Lie algebra
  where $\alpha$ is invertible may be regarded as an ordinary
  associative or Lie respectively algebra, albeit with an awkwardly
  defined operation.
\end{corollary}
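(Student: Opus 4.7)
The plan is to derive this corollary as an application of Theorem~\ref{thm:firsttwist} with the weak morphism $\beta = \alpha^{-1}$. The first step, which is the only place where any real work happens, is to verify that $\alpha^{-1}$ is indeed a weak morphism of $\mathcal{A}$, i.e., that
\[
  m\bigl( \alpha^{-1}(x), \alpha^{-1}(y) \bigr) = \alpha^{-1}\bigl( m(x,y) \bigr)
  \quad\text{for all \(x,y \in A\).}
\]
To see this, write arbitrary $x,y \in A$ as $x = \alpha(x')$ and $y = \alpha(y')$ (possible since $\alpha$ is invertible), then apply $\alpha^{-1}$ to both sides of the multiplicativity identity $m\bigl( \alpha(x'), \alpha(y') \bigr) = \alpha\bigl( m(x',y') \bigr)$ to obtain $\alpha^{-1}\bigl( m(\alpha(x'), \alpha(y')) \bigr) = m(x',y') = m\bigl( \alpha^{-1}(x), \alpha^{-1}(y) \bigr)$, as required.

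Next, I would invoke Theorem~\ref{thm:firsttwist} with $\beta := \alpha^{-1}$ to conclude that $\mathcal{A}_\beta = (A, \beta \circ m, \beta \circ \alpha)$ is a hom-algebra, which preserves hom-associativity and the hom-Lie property. Since $\beta \circ \alpha = \alpha^{-1} \circ \alpha = \mathrm{id}$, this is precisely the asserted $\mathcal{A}' = (A, \alpha^{-1} \circ m, \mathrm{id})$. Note that $\alpha^{-1}$ is in fact a (full) morphism and not merely a weak one, since it automatically commutes with $\alpha$; one could therefore also invoke clause (3) of the theorem to see that $\mathcal{A}'$ is multiplicative, though that is trivial here because its twisting map is the identity.

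For the final ``in particular'' clause, observe that when the twisting map is $\mathrm{id}$ the hom-associative identity \eqref{Eq:hom-associativity} collapses to ordinary associativity $m\bigl( x, m(y,z) \bigr) = m\bigl( m(x,y), z \bigr)$, and likewise the hom-Jacobi identity \eqref{Eq:HomJacobi} collapses to the classical Jacobi identity; anticommutativity \eqref{Eq:anticommutative} is unaffected by the twisting map to begin with. Hence $(A, \alpha^{-1} \circ m)$ is an ordinary associative (resp.\ Lie) algebra whenever $\mathcal{A}$ was a multiplicative hom-associative (resp.\ hom-Lie) algebra with invertible $\alpha$.

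The main (and really only) obstacle is the verification that $\alpha^{-1}$ is a weak morphism; everything else is bookkeeping built on Theorem~\ref{thm:firsttwist}. Strictly speaking this verification requires nothing beyond a substitution trick, so the corollary is essentially a direct specialisation of the previous theorem, which makes it unsurprising that the statement is attributed as an observation rather than a deep theorem.
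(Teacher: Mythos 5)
Your proposal is correct and follows exactly the same route as the paper, whose entire proof is the single line ``Take \(\beta = \alpha^{-1}\) in Theorem~\ref{thm:firsttwist}.'' You simply supply the details the paper leaves implicit, namely the (correct) verification that multiplicativity of \(\mathcal{A}\) makes \(\alpha^{-1}\) a weak morphism, and the observation that the hom-identities with twisting map \(\mathrm{id}\) collapse to the classical ones.
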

\begin{proof}
  Take \(\beta = \alpha^{-1}\) in Theorem~\ref{thm:firsttwist}.
\end{proof}

An application of that corollary is the identity
\[
  m\bigl( x_0, m(x_1,x_2) \bigr) =
  m\Bigl( m\bigr( \alpha^{-1}(x_0), x_1\bigr), \alpha(x_2) \Bigr)
\]
which hold in multiplicative hom-associative algebras with invertible
$\alpha$, and generalises to change the ``tilt'' of longer products.
The idea is to rewrite the product in terms of the corresponding
associative multiplication \(\tilde{m} = \alpha^{-1} \circ m\), with
respect to which $\alpha$ and $\alpha^{-1}$ are also algebra
homomorphisms, and apply the ordinary associative law to change the
``tilt'' of the product before converting the result back to the
hom-associative product $m$.

Since many (hom-)Lie algebras of practical interest are
finite-dimensional, and injectivity implies invertibility for
linear operators on a finite-dimensional space, one might expect
hom-Lie algebras to be particularly prone to fall under the domain
of that corollary, but the important condition that should not be
forgotten is that of the algebra being multiplicative. For example
the $q$-deformed $\mathfrak{sl}_2$ of \eqref{Eq:q-def-sl2} is easily
seen to not be multiplicative for general $q$.

\medskip

An identity that may seem conspicuously missing from
Definition~\ref{Def:hom-algebra} is that of the unit; although they
do not make sense in Lie algebras due to contradicting
anticommutativity, units are certainly a standard feature of
associative algebras, so why has there been no mention of
hom-associative unital algebras? The reason is that they, by the
following theorem, constitute a subclass of that of hom-associative
algebras which is even more restricted than that of the
multiplicative hom-associative algebras.
Unitality of hom-associative algebras were discussed first
in~\cite{Gohr}.

\begin{theorem}
  Let $\mathcal{A}$ be a hom-associative algebra. If there is some
  \(e \in \mathcal{A}\) such that
  \begin{equation}
    m(e,x) = x = m(x,e) \qquad\text{for all \(x \in \mathcal{A}\)}
  \end{equation}
  then
  \begin{equation} \label{Eq:centroid}
    m\bigl( \alpha(x), y \bigr) =
    m\bigl( x, \alpha(y) \bigr) =
    \alpha\bigl( m(x,y) \bigr)
    \qquad\text{for all \(x,y \in \mathcal{A}\).}
  \end{equation}
\end{theorem}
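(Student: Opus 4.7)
The plan is to use the unit $e$ as a probe: substitute it for each of the three variables in the hom-associative identity in turn, and see what falls out. Since hom-associativity \eqref{Eq:hom-associativity} has three slots in $m(\alpha(x), m(y,z)) = m(m(x,y), \alpha(z))$, there are three natural specialisations to try, and I expect all three to contribute.

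First, I would set $y = e$ in hom-associativity. The left-hand side becomes $m(\alpha(x), m(e,z)) = m(\alpha(x), z)$ and the right-hand side becomes $m(m(x,e), \alpha(z)) = m(x, \alpha(z))$. This establishes the first claimed equality
\[
  m(\alpha(x), z) = m(x, \alpha(z))
  \qquad\text{for all $x, z \in \mathcal{A}$,}
\]
at no cost.

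Next I would pin down how $\alpha(e)$ interacts with the multiplication. Specialising the identity just derived at $x = e$ gives $m(\alpha(e), z) = m(e, \alpha(z)) = \alpha(z)$, and specialising it at $z = e$ gives $m(\alpha(x), e) = m(x, \alpha(e))$, i.e.\ $\alpha(x) = m(x, \alpha(e))$. So $\alpha(e)$ acts, via either left or right multiplication, as the operator $\alpha$ itself.

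Finally, I would substitute $z = e$ directly into hom-associativity to obtain
\[
  m(\alpha(x), y) = m(\alpha(x), m(y,e)) = m(m(x,y), \alpha(e)),
\]
and then use the relation $m(w, \alpha(e)) = \alpha(w)$ just established (with $w = m(x,y)$) to conclude that $m(\alpha(x), y) = \alpha(m(x,y))$. Combined with the first step, this yields the full chain \eqref{Eq:centroid}. There is no real obstacle here; the only thing to notice is that three single-variable substitutions of $e$ into hom-associativity are enough, so the proof is essentially three lines of bookkeeping once one sees that $\alpha(e)$ plays the role of $\alpha$ under multiplication.
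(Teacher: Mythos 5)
Your proof is correct and follows essentially the same route as the paper's: both derive the first equality by inserting $e$ into the middle slot of hom-associativity, and both then obtain the second equality by inserting $e$ into an outer slot and using the first equality to recognise that multiplication by $\alpha(e)$ acts as $\alpha$. The only difference is cosmetic: you substitute $z=e$ and use $m(w,\alpha(e))=\alpha(w)$, while the paper substitutes the first variable and uses $m(\alpha(e),w)=m(e,\alpha(w))=\alpha(w)$ — mirror images of the same argument.
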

\begin{proof}
  For the first equality,
  \[
    m\bigl( \alpha(x), y \bigr) =
    m\bigl( \alpha(x), m(e,y) \bigr) =
    m\bigl( m(x,e), \alpha(y) \bigr) =
    m\bigl( x, \alpha(y) \bigr)
  \]
  by hom-associativity. For the second equality,
  \begin{multline*}
    m\bigl( x, \alpha(y) \bigr) =
    m\bigl( m(e,x), \alpha(y) \bigr) =
    m\bigl( \alpha(e), m(x,y) \bigr) = \\ =
    m\Bigl( e, \alpha\bigl( m(x,y) \bigr) \Bigr) =
    \alpha\bigl( m(x,y) \bigr)
  \end{multline*}
  by hom-associativity and the first equality.
\end{proof}

An identity such as \eqref{Eq:centroid} has profound effects on
the structure of a hom-associative algebra. Basically, it means
applications of $\alpha$ are not located in any particular position
in a product, but can move around unhindered. At the same time, even
a single $\alpha$ somewhere will act as a powerful lubricant that
lets the hom-associative identity shuffle around parentheses as
easily as the ordinary associative identity. In particular, any
product of $n$ algebra elements $x_1,\dotsc,x_n$ where at least one
is in the image of $\alpha$ will effectively be an associative
product; probably not the wanted outcome if one's aim is to create
new structures through deformations of old ones.

On the other hand, $\alpha$ satisfying \eqref{Eq:centroid} obviously
have some rather special properties. One may for any algebra
\(\mathcal{A} = (A,m)\) define the \emph{centroid}
$\mathrm{Cent}(\mathcal{A})$ of $\mathcal{A}$ as the set of all linear
self-maps \(\alpha \colon A \longrightarrow A\) satisfying the condition
\(\alpha\bigl( m(x, y) \bigr) = m\bigl( \alpha(x), y\bigr) =
m \bigl( x, \alpha (y) \bigr)\) for all $x,y\in A$.  Notice that if
\(\alpha \in \mathrm{Cent}(\mathcal{A})\), then we have
\(m\bigl( \alpha^p(x), \alpha^q(y) \bigr) =
(\alpha^{p+q} \circ\nobreak m)(x,y)\) for all \(p,q \geqslant 0\).
The construction of hom-algebras using elements of the centroid was
initiated in~\cite{BenayadiMakhlouf} for Lie algebras. We have

\begin{proposition}
  Let $(\mathcal{A},m)$ be an associative algebra and \(\alpha \in
  \mathrm{Cent}(\mathcal{A})\). Set for $x,y\in\mathcal{A}$
  \begin{align*}
    m_1(x,y) ={}& m\bigl( \alpha(x), y \bigr) \text{,}\\
    m_2(x,y) ={}& m\bigl( \alpha(x), \alpha(y) \bigr) \text{.}
  \end{align*}
  Then $(\mathcal{A},m_1,\alpha)$ and $(\mathcal{A},m_2,\alpha)$
  are hom-associative algebras.
\end{proposition}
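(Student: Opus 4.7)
The plan is to verify the hom-associative identity for each of $m_1$ and $m_2$ by direct computation, using the centroid property of $\alpha$ as the only nontrivial ingredient beyond the ordinary associativity of $m$. The key observation, already recorded in the paragraph preceding the proposition, is that any centroid element satisfies $m\bigl(\alpha^p(x),\alpha^q(y)\bigr)=\alpha^{p+q}\bigl(m(x,y)\bigr)$ for all $p,q\geqslant 0$. In particular, $m_1 = \alpha\circ m$ and $m_2 = \alpha^2\circ m$, so both twisted multiplications are obtained from $m$ by post-composition with a power of $\alpha$.

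First I would handle $m_1$. Expanding the left-hand side of the hom-associative identity using $m_1=\alpha\circ m$ and then pushing the outer $\alpha$ through $m$ by the centroid rule, one arrives at
\[
  m_1\bigl(\alpha(x),m_1(y,z)\bigr) = \alpha^{3}\bigl(m(x,m(y,z))\bigr).
\]
Treating the right-hand side the same way gives
\[
  m_1\bigl(m_1(x,y),\alpha(z)\bigr) = \alpha^{3}\bigl(m(m(x,y),z)\bigr),
\]
and these two expressions agree by associativity of $m$. The computation for $m_2$ is identical in structure: both sides collapse to $\alpha^{5}\bigl(m(x,m(y,z))\bigr)$ and $\alpha^{5}\bigl(m(m(x,y),z)\bigr)$ respectively, which again coincide by associativity.

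There is no real obstacle here beyond bookkeeping: the whole argument is just a matter of collecting $\alpha$'s outside of nested $m$'s using the centroid identity and invoking associativity at the end. One small thing worth flagging is that $\alpha$ is \emph{not} in general an algebra endomorphism of $(\mathcal{A},m)$ (it satisfies $m(\alpha(x),\alpha(y))=\alpha^{2}(m(x,y))$, not $\alpha(m(x,y))$), so neither statement is a direct corollary of Theorem~\ref{thm:firsttwist}; the centroid relation must be used explicitly. Once one notices that the exponents of $\alpha$ on the two sides automatically match (they count the total number of $\alpha$'s introduced by the definitions of $m_i$ and the outer twists), the verification reduces to a one-line application of associativity.
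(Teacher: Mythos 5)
Your proof is correct and takes essentially the same route as the paper: a direct verification using the centroid identity together with ordinary associativity of $m$. Your normalization of both sides to $\alpha^{3}\bigl(m(x,m(y,z))\bigr)$ versus $\alpha^{3}\bigl(m(m(x,y),z)\bigr)$ (and $\alpha^{5}$ for $m_2$) is just a tidier, more symmetric organization of the step-by-step $\alpha$-shuffling the paper displays for $m_1$, and your remark that $\alpha$ is not a weak morphism --- so the result does not follow from Theorem~\ref{thm:firsttwist} --- is accurate.
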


Indeed we have
\begin{multline*}
  m_1\bigl( \alpha(x), m_1(y,z) \bigr) =
  m\Bigl( \alpha^2(x), m\bigl( \alpha(y), z \bigr) \Bigr) =
  \alpha\Bigl( m\bigl( \alpha(x), m( \alpha(y), z) \bigr) \Bigr)
  =  \\ =
  m\Bigl( \alpha (x), \alpha\bigl( m\bigl( \alpha (y), z
    \bigr) \bigr) \Bigr) =
  m\Bigl( \alpha (x), m\bigl( \alpha (y), \alpha(z) \bigr) \Bigr)
  \text{.}
\end{multline*}

\begin{remark}
The definition of unitality which fits with Corollary~\ref{cor1:twist}
was introduced in~\cite{Gohr} and then used for hom-bialgebra and
hom-Hopf algebras in~\cite{Canepl2009}.

  Let $(\mathcal{A},m,\alpha)$ be a hom-associative algebra.
  It is said to be unital if there is some \(e \in \mathcal{A}\)
  such that
  \begin{equation} \label{HomAssUnit}
    m(e,x) = \alpha(x) = m(x,e)
    \qquad\text{for all \(x \in \mathcal{A}\).}
  \end{equation}
  
Therefore, similarly to Corollary~\ref{cor1:twist}, a unital
associative algebra gives rise a unital hom-associative algebra.
\end{remark}

\subsection{Admissible and enveloping hom-algebras}
\label{Ssec:Yau-constructions}

Two concepts that are of key importance in the theory of ordinary Lie
algebras are those of Lie-admissible and enveloping algebras. In the
setting of hom-algebras, these concepts are defined as follows, with
the classical non-hom concepts arising in the special case \(\alpha =
\mathrm{id}\).

\begin{definition}
  Let a hom-algebra \(\mathcal{A} = (A, m, \alpha)\) be given.
  Define \(b(x,y) = m(x,y) - m(y,x)\) to be the commutator (bracket)
  corresponding to $m$, and let $\mathcal{A}^-$ be the hom-algebra
  $(A,b,\alpha)$. The algebra $\mathcal{A}$ is said to be \index{hom-Lie-admissible hom-algebra}
  \emph{hom-Lie-admissible} if the hom-algebra $\mathcal{A}^-$ is
  hom-Lie.

  Now let $\mathcal{L}$ be a hom-Lie algebra.
  $\mathcal{A}$ is said to be \index{enveloping algebra for hom-Lie algebra} an \emph{enveloping algebra} for
  $\mathcal{L}$ if $\mathcal{L}$ is isomorphic to some hom-subalgebra
  \(\mathcal{B} = \bigl( B, b_{\vert B \times B}, \alpha_{\vert B})\)
  of $\mathcal{A}^-$ such that $B$ generates $\mathcal{A}$.
\end{definition}

It was shown in~\cite[Prop.~1.6]{MS} that any hom-associative algebra
is hom-Lie-admissible. On one hand, this becomes another method of
constructing new hom-Lie algebras, but it is more interesting when
wielded to the opposite end of studying a given hom-Lie algebra
through a corresponding enveloping algebra. To explain why this is
so, we will briefly review the classical theory of ordinary Lie and
associative algebras.

On a Lie group, the exponential map \(v \mapsto \exp(v)\) allows
transitioning from tangent vectors to non-infinitesimal shifts;
$\exp(tv)$ is the point where you end up if travelling from the
identity point at velocity $v$ for time $t$. Under the interpretation
that identifies vectors with invariant vector fields, and vector
fields with derivations on the ring of scalar-valued functions
(``scalar fields'', in the physicist terminology), the exponential
map may in fact be defined via the elementary power series formula
\(\exp(v) = \sum_{n=0}^\infty \frac{v^n}{n!}\) (where multiplication
of vectors is composition of differential operators) and in the Lie
group $(\mathbb{R},+)$ this turns out to be Taylor's formula:
$\exp\left( t\frac{d}{dx} \right)$ is the shift operator mapping an
analytic function $f$ to the shifted variant \(x \mapsto
f(x +\nobreak t)\). When doing the same in a more general Lie group,
one must however be careful
to note that vector fields need not commute, and that already the
degree $2$ term of for example $\exp(u +\nobreak v)$ contains $uv$
and $vu$ terms that need not be equal. The role of the Lie algebra is
precisely to keep track of the extent to which vector fields do not
commute, so the proper place to do algebra with vector fields to the
aim of studying the exponential map must be in an enveloping algebra
of the Lie algebra of invariant vector fields on the underlying Lie
group.

Conversely, one may start with a Lie algebra $\mathfrak{g}$ and ask
oneself what the corresponding Lie group would be like, by studying
formal series in the basic vector fields, while keeping in mind that
these should satisfy the commutation relations encoded into
$\mathfrak{g}$; this leads to the concept of \emph{formal groups}.
An important step towards it is the construction of the
\emph{(associative) universal enveloping algebra}
$\mathrm{U}(\mathfrak{g})$, which starts with the free associative
algebra generated by $\mathfrak{g}$ as a module and imposes upon it
the relations that
\begin{equation}
  xy-yx = [x,y]
  \qquad\text{for all \(x,y \in \mathfrak{g}\),}
\end{equation}
where on the left hand side we have multiplication in
$\mathrm{U}(\mathfrak{g})$ but on the right hand side the bracket
operation of the Lie algebra $\mathfrak{g}$. More technically, the
free associative algebra in question can be constructed as the tensor
algebra \(\mathrm{T}(\mathfrak{g}) = \bigoplus_{n=0}^\infty
\mathfrak{g}^{\otimes n}\) where the product of \(x_1 \otimes \dotsb
\otimes x_m \in \mathfrak{g}^{\otimes m}\) and \(y_1 \otimes \dotsb
\otimes y_n \in \mathfrak{g}^{\otimes n}\) is \(x_1 \otimes \dotsb
\otimes x_m \otimes y_1 \otimes \dotsb \otimes y_n \in
\mathfrak{g}^{\otimes (m+n)}\). Imposing the commutation relations
can then be done by taking the quotient by the two-sided ideal
$J(\mathfrak{g})$ in $\mathrm{T}(\mathfrak{g})$ that is generated by
all \(xy - yx - [x,y]\) for \(x,y \in \mathfrak{g}\), i.e.,
\[
  \mathrm{U}(\mathfrak{g}) :=
  \mathrm{T}(\mathfrak{g}) \big/ J(\mathfrak{g}) =
  \mathrm{T}(\mathfrak{g}) \bigg/ \Big<
    \setOf[\big]{ xy - yx - [x,y] }{ x,y \in \mathfrak{g} }
  \Bigr> \text{.}
\]

With this in mind, it is only natural to generalise this construction
to the hom-case, and in~\cite{Yau:EnvLieAlg} Yau does so. Since he
in the non-associative case cannot take advantage of familiar
concepts such as the tensor algebra, this construction will however
involve a few steps more than one might be used to from the non-hom
setting.
Notably, Yau begins with setting up the free hom-algebra
$F_{\mathrm{HNAs}}(\mathfrak{g})$: neither hom-associativity
nor ordinary associativity is inherent. Then he goes on to impose
hom-associativity by taking a quotient, which results in the free
hom-associative algebra $F_{\mathrm{HAs}}(\mathfrak{g})$;
this is what corresponds to the tensor algebra
$\mathrm{T}(\mathfrak{g})$. Another quotient imposes also the
commutation relations, to finally yield the universal enveloping
hom-associative algebra $U_{\mathrm{HLie}}(\mathfrak{g})$.

When reading through the technical details of these constructions,
which we shall quote below for the reader's convenience, they may
seem a daring plunge forward into very general algebra, that
harnesses advanced combinatorial objects to achieve a clear picture
of the algebra. It may be that they are that, but our main point in
the next section is that they are also an entirely straightforward
application of the basic methods of universal algebra, so there is in
fact very little that was novel in these constructions. The reader
who has grasped the material in Section~\ref{Sec:ClassicalUA} will be
able to recreate something equivalent to the following 
(modulo some minor optimisations) from scratch.

For $n \geqslant 1$, let $T_n$ denote the set of isomorphism classes
of plane\footnote{
  Yau, like many other algebraists, actually uses the term `planar'
  rather than `plane', but this practice is simply wrong as the two
  words refer to slightly different graph-theoretical properties:
  a graph is \emph{planar} if it can be embedded in a genus $0$
  surface, but \emph{plane} if it is given with such an embedding.
  To speak of a
  `planar tree' is a tautology, because trees by definition contain
  no cycles, will therefore have no subdivided $K_5$ or $K_{3,3}$ as
  subgraph, and thus by Kuratowski's Theorem be planar. What is of
  utmost importance here is rather that the trees are given with a
  (combinatorial) \emph{embedding} into the plane, since that specifies
  a local cyclic order on edges incident with a vertex, which is what
  the isomorphisms spoken of are required to preserve. As rooted trees,
  the two elements of $T_3$ are isomorphic, but as plane rooted trees
  they are not.
%
} binary trees with $n$ leaves and one root.  The first $T_n$
are depicted below.
\begin{multline*}
  T_1 = \left\lbrace \tone \, \right\rbrace, \quad
  T_2 = \left\lbrace \ttwo \right\rbrace, \quad
  T_3 = \left\lbrace \tthreeone,\, \tthreetwo \right\rbrace,
  \\
  T_4 = \left\lbrace
    \tfourone,\, \tfourtwo,\, \tfourthree,\, \tfourfour,\, \tfourfive
  \right\rbrace
  \text{.}
\end{multline*}
Each dot represents either a leaf, which is always depicted at the
top, or an internal vertex.  An element in $T_n$ will be called an
\emph{$n$-tree}.  The set of nodes ($=$ leaves and internal vertices)
in a tree $\psi$ is denoted by $N(\psi)$.  The node of an $n$-tree
$\psi$ that is connected to the root (the lowest point in the
$n$-tree) will be denoted by $v_{\mathrm{low}}$.  In other words,
$v_{\mathrm{low}}$ is the lowest internal vertex in $\psi$ if
$n \geq 2$ and is the only leaf if $n = 1$.

Given an $n$-tree $\psi$ and an $m$-tree $\varphi$, their
\emph{grafting} $\psi \vee \varphi \in T_{n+m}$
is the tree obtained by placing $\psi$ on the left and $\varphi$ on
the right and joining their roots to form the new lowest internal
vertex, which is connected to the new root.  Pictorially, we have
$$
  \psi \vee \varphi \,=\, \psiwedgephi
  \text{.}
$$
Note that grafting is a nonassociative operation.  As we will discuss
below, the operation of grafting is for generating the multiplication
$m$ of a free nonassociative algebra.

To handle hom-algebras, we need to introduce weights on plane trees.
A \emph{weighted $n$-tree} is a pair \(\tau = (\psi, w)\), in which
$\psi \in T_n$ is an $n$-tree and
$w$ is a function from the set of internal vertices of $\psi$
to the set $\mathbb{N}$ of non-negative integers.
If $v$ is an internal vertex of $\psi$, then we call $w(v)$ the
weight of $v$. The $n$-tree $\psi$ is called the underlying $n$-tree of
$\tau$, and $w$ is called the weight function of $\tau$. The set of all
weighted $n$-trees is denoted $T^{\mathrm{wt}}_n$.
Since the $1$-tree has no internal vertex, we have that $T_1 =
T^{\mathrm{wt}}_1$. Likewise, the grafting of two weighted trees is
defined as above by connecting them to a new root for which the weight
is $0$. There is also an operation to change the weight; for \(\tau =
(\psi,w)\), we define \(\tau[r] = (\psi,w')\) where
\(w'(v_{\mathrm{low}}) = w(v_{\mathrm{low}}) + r\) and \(w'(v)=w(v)\)
for all internal vertices \(v \neq v_{\mathrm{low}}\).

Now let an $\mathcal{R}$-module $A$ and a linear map \(\alpha\colon A
\longrightarrow A\) be given. As a set,
$$
  F_{\mathrm{HNAs}}(A) =
  \bigoplus_{n\geq 1} \bigoplus_{\tau\in T_n^{\mathrm{wt}}}
    A^{\otimes n}
  \text{.}
$$
We write $A_\tau^{\otimes n}$ for the component in this direct
sum that corresponds to the values $n$ and $\tau$ of these summation
indices. There is a canonical isomorphism \(A_\tau^{\otimes n} \cong
A^{\otimes n}\). For any \(n \geqslant 1\), \(\tau \in
T_n^{\mathrm{wt}}\), and \(x_1,\dotsc,x_n \in A\), we write
$(x_1 \otimes\nobreak \dotsb \otimes\nobreak x_n)_\tau$ for the
element of $A_\tau^{\otimes n}$ that corresponds to \(x_1 \otimes
\dotsb \otimes x_n \in A^{\otimes n}\).
The linear map $\alpha$ is extended to a linear map \(\alpha_F\colon
F_{\mathrm{HNAs}}(A) \longrightarrow F_{\mathrm{HNAs}}(A)\) by the rule
$$
  \alpha_F \bigl( (x_1 \otimes \dotsb \otimes x_n)_\tau \bigr) =
  (x_1 \otimes \dotsb \otimes x_n)_{\tau[1]}
  \qquad\text{for \(\tau \notin T_1\)}
$$
and the multiplication $m_F$ on $F_{\mathrm{HNAs}}(A)$ is defined by
$$
  m_F\bigl(
    (x_1 \otimes\dotsb\otimes x_n)_\tau,
    (x_{n+1} \otimes\dotsb\otimes x_{n+m})_\sigma
  \bigr) = (x_1 \otimes\dotsb\otimes x_{n+m})_{\tau\vee\sigma}
$$
and bilinearity. This $\bigl( F_{\mathrm{HNAs}}(A), m_F, \alpha_F
\bigr)$ is the free (nonassociative) $\mathcal{R}$-hom-algebra
generated by the hom-module $(A,\alpha)$.

From there, the corresponding free hom-associative algebra is
constructed as the quotient
$$
  F_{\mathrm{HAs}}(A) :=
  F_{\mathrm{HNAs}}(A) \big/ J^\infty
$$
where \(J^\infty = \bigcup_{n\geq 1} J^n\) and \(J^1 \subseteq
J^2 \subseteq \dotsb \subseteq J^\infty \subset
F_{\mathrm{HNAs}}(A)\) is an ascending chain of two-sided ideals
defined by
\begin{align*}
  J^1 ={}& \Bigl< \operatorname{Im}\bigl(
    m_F \circ (m_F \otimes \alpha_F - \alpha_F \otimes m_F
  \bigr) \Bigr>
  \text{,}\\
  J^{n+1} ={}& \bigl< J^n \cup \alpha_F(J^n) \bigr>
    \qquad\text{for \(n \geqslant 1\).}
\end{align*}
The \emph{universal enveloping algebra} of a hom-Lie algebra
\((\mathfrak{g}, b, \alpha)\) is similarly obtained as the
quotient
$$
  U_{\mathrm{HLie}}(\mathfrak{g}) :=
  F_{\mathrm{HNAs}}(\mathfrak{g}) \big/ I^\infty
$$
where $I^\infty$ is the two-sided ideal obtained if one starts with
$$
  I^1 = \Bigl< \operatorname{Im}\bigl(
    m_F \circ (m_F \otimes \alpha_F - \alpha_F \otimes m_F)
  \bigr) \cup \bigl\{
     m_F(x,y) - m_F(y,x) - b(x,y) \bigm|  x,y\in \mathfrak{g}
  \bigr\} \Bigr>
$$
and then similarly lets \(I^{n+1} = \bigl< I^n \cup\nobreak
\alpha_F(I^n) \bigr>\) for \(n \geqslant 1\) and \(I^\infty =
\bigcup_{n \geqslant 1} I^n\). Since \(I^n \supseteq J^n\) for all
\(n \geqslant 1\), it follows that
$U_{\mathrm{HLie}}(\mathfrak{g})$ may alternatively be
regarded as a quotient of $F_{\mathrm{HAs}}(\mathfrak{g})$. This
further justifies labelling the hom-associative algebra
$U_{\mathrm{HLie}}(\mathfrak{g})$ as the hom-analogue for a
hom-Lie algebra $\mathfrak{g}$ of the universal enveloping algebra of
a Lie algebra.

There is however one important question regarding this
$U_{\mathrm{HLie}}$ which has not been answered by the
above, and in fact seems to be open in the literature: \emph{Is
$U_{\mathrm{HLie}}(\mathfrak{g})$ for every hom-Lie algebra
$\mathfrak{g}$ an enveloping algebra of $\mathfrak{g}$?}
It follows from the form of the construction that there is a linear
map \(j\colon \mathfrak{g} \longrightarrow
U_{\mathrm{HLie}}(\mathfrak{g})\) with the properties that
\begin{align*}
  j\bigl( [x,y] \bigr) ={}& j(x)j(y) - j(y)j(x)
    && \text{for all \(x,y \in \mathfrak{g}\),}\\
  j\big( \alpha(x) \bigr) ={}& \alpha\bigl( j(x) \bigr)
    && \text{for all \(x \in \mathfrak{g}\),}
\end{align*}
and hence $j$ becomes a morphism of hom-Lie algebras \(\mathfrak{g}
\longrightarrow U_{\mathrm{HLie}}(\mathfrak{g})^-\), but it is
entirely unknown whether $j$ is injective. A failure to be injective
would obviously render these hom-associative enveloping algebras of
hom-Lie algebras less important than the ordinary associative
enveloping algebras of ordinary Lie algebras, as they would fail to
capture all the information encoded into the hom-Lie algebra.

Another way of phrasing the conjecture that the canonical homomorphism
is injective is that the ideal $I^\infty$ used to construct
$U_{\mathrm{HLie}}(\mf{g})$ does not contain any degree~$1$ elements;
such elements would correspond to linear dependencies in
$U_{\mathrm{HLie}}(\mf{g})$ between the images of basis elements in
$\mf{g}$. A simple argument for this conjecture would be that such
dependencies do not occur in the associative case, and since the
hom-associative case has ``more degrees of freedom'' than the
associative case, it shouldn't happen here either. An argument
\emph{against} it comes from the converse of the
Poincar\'e--Birk\-hoff--Witt Theorem~\cite{ViktorAlgebraVI}:
\emph{If the canonical homomorphism \(\mf{g} \longrightarrow
\mathrm{U}(\mf{g})\) is injective, then $\mf{g}$ is a Lie
algebra;} the ordinary universal enveloping algebra construction only
manages to envelop the algebra one starts with if that algebra is a
Lie algebra. What can be hoped for is of course that the conditions
inherent in $U_{\mathrm{HLie}}$ have precisely those deformations
relative to the conditions of $U_{\mathrm{Lie}}$ that makes
everything work out for hom-Lie algebras instead, but they could just
as well end up going some other way.

To positively resolve the envelopment problem, one would probably
have to prove a hom-analogue of the Poincar\'e--Birk\-hoff--Witt
Theorem. Methods for this---particularly the Diamond Lemma---are
available, but the calculations required seem to be rather extensive.
To negatively resolve the envelopment problem, it would be sufficient
to find one hom-Lie algebra $\mf{g}$ for which the canonical
homomorphism \(\mf{g} \longrightarrow U_{\mathrm{HLie}}(\mf{g})\) is not
injective. Yau does show in~\cite[Th.~2]{Yau:EnvLieAlg} that
$U_{\mathrm{HLie}}(\mf{g})$ satisfies an universal property with
respect to hom-associative enveloping algebras, so a hom-Lie algebra
$\mf{g}$ which constitutes a counterexample cannot arise as a
subalgebra of $\mathcal{A}^-$ for any hom-associative algebra
$\mathcal{A}$.

\section{Classical universal algebra: free algebras and their quotients}
\label{Sec:ClassicalUA}

\subsection{Discrete free algebras}

A basic concept in universal algebra is that of the \emph{signature}.
A signature $\Omega$ is a set of formal symbols, together with a
function \(\mathrm{arity}\colon \Omega \longrightarrow \mathbb{N}\) that gives the arity,
or ``wanted number of operands'', for each symbol. Symbols with arity
$0$ are called \emph{constants} (or said to be \emph{nullary}),
symbols with arity $1$ are said to be \emph{unary}, symbols with
arity $2$ are said to be \emph{binary}, symbols with arity $3$ are
said to be \emph{ternary}, and so on; one may also speak about a
symbol being $n$-ary. A convenient shorthand, used in for
example~\cite{TATA}, for specifying signatures is as a set of
``function prototypes'': symbols of positive arity are followed by a
parenthesis containing one comma less than the arity, whereas
constants are not followed by a parenthesis. Hence \(\Omega = \bigl\{
\mathsf{a}(), \mathsf{m}({,}), \mathsf{x}, \mathsf{y} \bigr\}\) is the signature of four
symbols $\mathsf{a}$, $\mathsf{m}$, $\mathsf{x}$, and $\mathsf{y}$, where $\mathsf{a}$ is unary,
$\mathsf{m}$ is binary, and the remaining two are constants. The signature
for a hom-algebra is thus $\bigl\{ \mathsf{a}(), \mathsf{m}(,) \bigr\}$, whereas
the signature for a unary hom-algebra would be $\bigl\{ \mathsf{a}(),
\mathsf{m}(,), \mathsf{1} \bigr\}$; a unit would be an extra constant symbol.

Given a signature $\Omega$, a set $A$ is said to be an
$\Omega$-algebra if it for every symbol \(x \in \Omega\) comes with a
map \(f_x\colon A^{\mathrm{arity}(x)} \longrightarrow A\); these maps are the
\emph{operations} of the algebra. Note that no claim is made that the
operations fulfill any particular property (beyond matching the
respective arities of their symbols), so the $\Omega$-algebra
structure is not determined by $A$ unless that set has cardinality
$1$; therefore one might want to be more formal and say it is \(\mc{A}
= \bigl( A, \{f_x\}_{x \in \Omega} \bigr)\) that is the
$\Omega$-algebra, but we shall in what follows generally be concerned
with only one $\Omega$-algebra structure at a time on each base set.

What the $\Omega$-algebra concept suffices for, despite imposing
virtually no structure upon the object in question, is the definition
of an \emph{$\Omega$-algebra homomorphism}: a map \(\phi\colon A
\longrightarrow B\) is an $\Omega$-algebra homomorphism from $\bigl( A,
\{f_x\}_{x\in\Omega} \bigr)$ to $\bigl( B, \{g_x\}_{x\in\Omega}
\bigr)$ if
\begin{multline}
  \phi\bigl( f_x(a_1,\dotsc,a_{\mathrm{arity}(x)}) \bigr) =
  g_x\bigl( \phi(a_1), \dotsc, \phi(a_{\mathrm{arity}(x)}) \bigr)
  \\\text{for all \(a_1,\dotsc,a_{\mathrm{arity}(x)} \in A\) and
    \(x \in \Omega\).}
\end{multline}
It is easy to verify that these homomorphisms obey the axioms for
being the morphisms in the category of $\Omega$-algebras, so that
category $\Omega\mathtt{\txthyphen algebra}$ is what one gets.
One may then define (up to isomorphism) \emph{the free
$\Omega$-algebra} as being the free object in this category, or more
technically state that $\mc{F}_\Omega(X)$ together with \(i\colon X
\longrightarrow \mc{F}_\Omega(X)\) is \emph{the free $\Omega$-algebra generated
by $X$} if there for every $\Omega$-algebra $\mc{A}$ and every map
\(j\colon X \longrightarrow \mc{A}\) exists a unique $\Omega$-algebra
homomorphism \(\phi\colon \mc{F}_\Omega(X) \longrightarrow \mc{A}\) such that
\(j = \phi \circ i\). An alternative claim to the same effect is that
$\mc{F}_\Omega$, interpreted as a functor from $\mathtt{Set}$ to
$\Omega\mathtt{\txthyphen algebra}$, is left adjoint of the forgetful
functor mapping an $\Omega$-algebra to its underlying set.

Although these definitions may seem frightfully abstract, the objects
in question are actually rather easy to construct: $\mc{F}_\Omega(X)$
is merely the set $T(\Omega,X)$ of all \emph{formal terms} in $\Omega
\mathbin{\dot{\cup}} X$, where the elements of $X$ are interpreted as
symbols of arity $0$. Hence the first few elements of $T\bigl( \bigl\{
\mathsf{a}(), \mathsf{m}({,})\bigr\}, \{\mathsf{x}, \mathsf{y}\} \bigr)$ are
$$
  \mathsf{x}, \mathsf{y}, \mathsf{a}(\mathsf{x}), \mathsf{a}(\mathsf{y}),
  \mathsf{a}(\mathsf{a}(\mathsf{x})), \mathsf{a}(\mathsf{a}(\mathsf{y})),
  \mathsf{m}(\mathsf{x},\mathsf{x}), \mathsf{m}(\mathsf{x},\mathsf{y}), \mathsf{m}(\mathsf{y},\mathsf{x}), \mathsf{m}(\mathsf{y},\mathsf{y}),
  \dotsc
$$
and the operations $\{f_x\}_{x\in\Omega}$ in the free $\Omega$-algebra
$T(\Omega,X)$ merely produce their formal terms counterparts:
\begin{multline*}
  f_x( t_1, \dotsc, t_{\mathrm{arity}(x)} ) := x ( t_1, \dotsc, t_{\mathrm{arity}(x)} )
  \\ \text{for all \(t_1,\dotsc,t_{\mathrm{arity}(x)} \in T(\Omega,X)\) and
    \(x \in \Omega\).}
\end{multline*}
Conversely, the unique morphism $\phi$ of the universal property
turns out to evaluate formal terms in the codomain $\Omega$-algebra,
so for any given \(j\colon X \longrightarrow B\) it can be defined recursively
through
\begin{equation*}
  \phi(t) = \begin{cases}
    j(x)& \text{if \(t=x \in X\),}\\
    g_x\bigl( \phi(t_1),\dotsc,\phi(t_n) \bigr)&
      \text{if \(t = x(t_1,\dotsc,t_n)\) where \(x \in \Omega\)}
  \end{cases}
\end{equation*}
for all \(t \in T(\Omega,X)\).

\subsection{Quotient algebras}

Completely free algebras might be cute, but most of the time one is
rather interested in something with a bit more structure, in the sense
that certain identities are known to hold; in an associative algebra,
the associativity identity holds, whereas in a hom-associative algebra
the hom-associative identity \eqref{Eq:hom-associativity} holds. One
approach to imposing such properties on one's algebras is to restrict
attention to the subcategory of $\Omega$-algebras which satisfy the
wanted identities, and then look at the free object of that subcategory.
Another approach is to take a suitable quotient of the free object
from the full category.

In general $\Omega$-algebras, the denominator in a quotient is a
\emph{congruence relation} on the numerator, and an $\Omega$-algebra
congruence relation is an equivalence relation which is preserved by
the operations; $\equiv$ is a congruence relation on \(\mc{A} =
\bigl( A, \{f_x\}_{x\in\Omega} \bigr)$ if it is an equivalence
relation on $A$ and
\begin{multline*}
  f_x(a_1,\dotsc, a_n) \equiv f_x(b_1,\dotsc,b_n)\\
  \text{for all \(a_1,\dotsc,a_n,b_1,\dotsc,b_n \in A\), \(x \in
  \Omega\), and \(n=\mathrm{arity}(x)\)}\\
  \text{such that \(a_1 \equiv b_1\), \(a_2 \equiv b_2\), \dots, and
  \(a_n \equiv b_n\).}
\end{multline*}
The quotient \(\bigl( B, \{g_x\}_{x\in\Omega}\bigr) := \mc{A}/{\equiv}\)
then has $B$ equal to the set of $\equiv$-equivalence classes in $A$,
and operations defined by
\begin{multline*}
  g_x\bigl( [a_1], \dotsc, [a_{\mathrm{arity}(x)}] \bigr) =
  \bigl[ f_x(a_1, \dotsc, a_{\mathrm{arity}(x)}) \bigr] \\
  \text{for all \(a_1,\dotsc,a_{\mathrm{arity}(x)} \in A\) and \(x \in \Omega\);}
\end{multline*}
congruence relations are precisely those for which this definition
makes sense. Conversely, the relation $\equiv$ defined on some
$\Omega$-algebra $\mc{A}$ by \(a \equiv b\) iff \(\phi(a) = \phi(b)\)
will be a congruence relation whenever $\phi$ is an $\Omega$-algebra
homomorphism.

It should at this point be observed that defining specific congruence
relations to that they respect particular identities is not an
entirely straightforward matter;
it would for example be wrong to expect a simple formula such
as `\(b \equiv b'\) iff \(b = \mathsf{m}\bigl( \mathsf{a}(b_1), \mathsf{m}(b_2,b_3)
\bigr)\) and \(b' = \mathsf{m}\bigl( \mathsf{m}(b_1,b_2), \mathsf{a}(b_3) \bigr)\) for
some \(b_1,b_2,b_3 \in \mc{F}_\Omega(X)\)' to set up the
congruence relation imposing hom-associativity on $\mc{F}_\Omega(X)$,
as it actually fails even to define an equivalence relation. Instead
one considers the family of \emph{all} congruence relations which
fulfill the wanted identities, and picks the smallest of these, which
also happens to be the intersection of the entire family; this makes
precisely those identifications of elements which would be logical
consequences of the given axioms, but nothing more. Thus to construct
the free hom-associative $\bigl\{ \mathsf{a}(), \mathsf{m}(,) \bigr\}$-algebra
generated by $X$, one would let \(\Omega = \bigl\{ \mathsf{a}(), \mathsf{m}(,)
\bigr\}\) and form $T(\Omega,X) \big/ \equiv$, where $\equiv$ is
defined by
\begin{equation} \label{Eq:Kongr.ass.algebra}
  t \equiv t' \quad\Longleftrightarrow\quad
  \parbox[t]{0.7\linewidth}{\raggedright
    \(t \sim t'\) for every congruence relation $\sim$ on
    $T(\Omega,X)$ satisfying \(\mathsf{m}\bigl( \mathsf{a}(t_1), \mathsf{m}(t_2,t_3)
    \bigr) \sim \mathsf{m}\bigl( \mathsf{m}(t_1,t_2), \mathsf{a}(t_3) \bigr)\) for all
    \(t_1,t_2,t_3 \in T(\Omega,X)\).
  }
\end{equation}

Another thing that should be observed is that this construction of
the free hom-associative algebra is not \emph{effective}, i.e., one
cannot use it to implement the algebra on a computer, nor to reliably
carry out calculations with pen and paper. The
construction does suggest both an encoding of arbitrary algebra
elements---since the algebra elements are equivalence classes, just
use any element of a class to represent it---and an implementation
of operations---just perform the corresponding operation of
$\mc{F}_\Omega(X)$ on the equivalence representatives---but it does
then not suggest any algorithm for deciding equality. Providing such
an algorithm is of course equivalent to solving the word problem for
the algebra\slash congruence relation in question, so there cannot be
a universal method which works for arbitrary algebras, but nothing
prevents seeking a solution that works a particular algebra, and
indeed one should always consider this an important problem to solve
for every class of algebras one considers.

One common form of solutions to the word problem is to device a
\emph{normal form map} for the congruence relation $\equiv$: a map
\(N\colon T(\Omega,X) \longrightarrow T(\Omega,X)\) such that \(N(t) \equiv t\)
for all \(t \in T(\Omega,X)\) and \(t \equiv t'\) iff \(N(t)=N(t')\);
this singles out one element from each equivalence class as being the
\emph{normal form} representative of that class, thereby reducing the
problem of deciding congruence to that of testing whether the
respective normal forms are equal. Normal form maps are often
realised as the limit of a system of \emph{rewrite rules} derived
directly from the defining relations; we shall return to this matter
in Subsection~\ref{Ssec:DL}.

\subsection{Algebras with linear structure}

One thing that has so far been glossed over is that e.g.~a
hom-associative algebra is not just supposed to have a
non-associative multiplication $\mathsf{m}$ and a homomorphism $\mathsf{a}$, it
is also supposed to have addition and multiplication by a scalar. The
general way to ensure this is of course to extend the signature with
operations for these, and then impose the corresponding axioms on the
congruence relation used, but a more practical approach is usually to
switch to a category where the wanted linear structure is in place
from the start. As it turns out the free object in the category of
algebras with a linear structure can be constructed as the set of
formal linear combinations of elements in the free (without linear
structure) algebra, our constructions above remain highly useful.

Let $\mc{R}$ be an associative and commutative ring with unit. An
$\Omega$-algebra \(\bigl( A, \{f_x\}_{x\in\Omega} \bigr)$ is
\emph{$\mc{R}$-linear} if $A$ is an $\mc{R}$-module and each operation
$f_x$ is $\mc{R}$-multilinear, i.e., it is $\mc{R}$-linear in each
argument. An $\Omega$-algebra homomorphism \(\phi\colon \mc{A} \longrightarrow
\mc{B}\) is an \emph{$\mc{R}$-linear $\Omega$-algebra homomorphism}
if $\mc{A}$ and $\mc{B}$ are $\mc{R}$-linear $\Omega$-algebras and
$\phi$ is an $\mc{R}$-module homomorphism. An \emph{$\mc{R}$-linear
$\Omega$-algebra congruence relation} $\equiv$ is an $\Omega$-algebra
congruence relation on an $\mc{R}$-linear $\Omega$-algebra which is
preserved also by module operations, i.e., \(a_1 \equiv b_1\) and
\(a_2 \equiv b_2\) implies \(ra_1 \equiv rb_1\) (for all \(r \in
\mc{R}\)) and \(a_1+a_2 \equiv b_1+b_2\).

The free $\mc{R}$-linear $\Omega$-algebra generated by a set $X$ can
be constructed as the set of all formal linear combinations of
elements of $T(\Omega,X)$, i.e., as the free $\mc{R}$-module with
basis $T(\Omega,X)$; we will denote this free algebra by
$\mc{R}\{\Omega,X\}$ (continuing the notation family $\mc{R}[X]$,
$\mc{R}(X)$, $\mc{R}\langle X\rangle$). The universal property it
satisfies is that any function \(j\colon X \longrightarrow \mc{A}\) where
$\mc{A}$ is an $\mc{R}$-linear $\Omega$-algebra gives rise to a
unique $\mc{R}$-linear $\Omega$-algebra homomorphism \(\phi\colon
\mc{R}\{\Omega,X\} \longrightarrow \mc{A}\) such that \(j = \phi \circ i\),
where $i$ is the function \(X \longrightarrow \mc{R}\{\Omega,X\}\) such that
$i(x)$ is $x$, or more precisely the linear combination which has
coefficient $1$ for the formal term $x$ and coefficient $0$ for all
other terms.

A consequence of the above is that $\mc{R}\{\varnothing,X\}$ is the
\emph{free} $\mc{R}$-module with basis $X$, which might be seen as
restrictive. There is an alternative concept of free $\mc{R}$-linear
$\Omega$-algebra which is generated by an $\mc{R}$-module $\mc{M}$
rather than a set $X$, in which case the above universal property must
instead hold for $j$ being an $\mc{R}$-module homomorphism \(\mc{M}
\longrightarrow \mc{A}\); in more categoric terms, this corresponds to the
functor producing the free algebra being left adjoint of not the
forgetful functor from \texttt{$\mc{R}$-linear $\Omega$-algebra} to
\texttt{Set}, but left adjoint of the forgetful functor from
\texttt{$\mc{R}$-linear $\Omega$-algebra} to
\texttt{$\mc{R}$-module}. It is however quite possible to get to that
also by going via $\mc{R}\{\Omega,X\}$, as all one has to do is take
\(X = \mc{M}\) and then consider the quotient by the smallest
congruence relation $\equiv$ which has \(i(a) + i(b) \equiv
i(a +\nobreak b)\) and \(r i(a) \equiv i(ra)\) for all \(a,b \in
\mc{M}\) and \(r \in \mc{R}\) (it is useful here to make the
function \(i\colon X \longrightarrow \mc{R}\{\Omega,X\}\) figuring in the
universal property of $\mc{R}\{\Omega,X\}$ explicit, as $\equiv$
would otherwise seem a triviality); the result is the free object in
the category of $\mc{R}$-linear $\Omega$-algebras that are equipped
with an $\mc{R}$-module homomorphism $i'$ from $\mc{M}$, just like
the alternative universal property would require.

No doubt some readers may find this construction wasteful---a
separate constant symbol for every element of the module $\mc{M}$,
with a host of identities just to make them ``remember'' this module
structure, immediately rendering most of the symbols redundant---and
would rather prefer to construct the free $\mc{R}$-linear
$\Omega$-algebra on the $\mc{R}$-module $\mc{M}$ by direct sums of
appropriate tensor products of $\mc{M}$ with itself, somehow
generalising the tensor algebra construction \(\mathrm{T}(\mc{M}) =
\bigoplus_{n=0}^\infty \mc{M}^{\otimes n}\). However, from the
perspectives of constructive set theory and effectiveness, such
constructions are guilty of the exact same wastefulness; they only
manage to sweep it under the proverbial rug that is the definition
of the tensor product. As is quite often the case, one ends up doing
the same thing either way, although the presentation may obscure the
correspondencies between the two approaches.

Another stylistic detail is that of whether the denominator in a
quotient should be a congruence relation or an ideal. For
$\mc{R}$-linear $\Omega$-algebras, the equivalence class of $0$ turns
out to be an ideal, and conversely a congruence relation $\equiv$ is
uniquely determined by its equivalence class of $0$ since \(a \equiv
b\) if and only if \(a-b \equiv 0\). In our experience, an important
advantage of the congruence relation formalism is that it makes the
dependency on the signature $\Omega$ more explicit, since it is not
uncommon to see authors continue to associate ``ideal'' and\slash or
related concepts with the definition these have in a more traditional
setting; particularly continuing to use `two-sided ideal' and
`$\langle S\rangle$' as they would be defined in an $\bigl\{ \mathsf{m}({,})
\bigr\}$-algebra even though all objects under consideration are
really $\bigl\{ \mathsf{m}({,}), \mathsf{a}() \bigr\}$-algebras. To be explicit,
an \emph{ideal} $\mc{I}$ in an $\mc{R}$-linear $\Omega$-algebra
$\bigl( A, \{f_x\}_{x\in\Omega} \bigr)$ is an $\mc{R}$-submodule of
$A$ with the property that
\begin{multline*}
  f_x(a_1,\dotsc,a_{\mathrm{arity}(x)}) \in \mc{I}
  \quad\text{whevener \(\{a_1,\dotsc,a_{\mathrm{arity}(x)}\} \cap \mc{I} \neq
    \varnothing\),}\\
  \text{for all \(a_1,\dotsc,a_{\mathrm{arity}(x)} \in A\) and \(x \in \Omega\).}
\end{multline*}
Note that for constants $x$, the left operand of $\cap$ above is
always empty, and thus this condition does not require that (the
values of) constants would be in every ideal. It does however imply
that unary operations map ideals into themselves, and higher arity
operations take values within the ideal as soon as any operand is in
the ideal.

\subsection{Algebra constructions revisited}

Modulo some minor details, this universal algebra machinery allows
us to reproduce quickly the constructions of free hom-nonassociative
algebras, free hom-associative algebras, and universal enveloping
hom-associative algebras from Subsection~\ref{Ssec:Yau-constructions},
as well as various others that~\cite{Yau:EnvLieAlg} treat more cursory.
\index{binary tree} The plane binary trees are simply an alternative encoding of formal
terms over the signature $\bigl\{ \mathsf{m}(,) \bigr\}$; the correspondence
of one to the other is arguably not entirely trivial, but well-known,
and it is clearly the binary trees that have the weaker link to the
algebra. There is perhaps a slight mismatch in that a formal term
would encode an actual constant within each leaf, whereas the binary
trees as specified rather take the leaves to mark places where a
constant can be inserted, but we shall return to that in the next
section.

The weighting added to the trees is a method of encoding also
the $\alpha$ operation of a hom-algebra; the unstated idea is that
the weight $w(v)$ of a node $v$ specifies how many times $\alpha$
should be applied to the partial result of that node. This is thus
why grafting creates new nodes with weight $0$---grafting is
multiplication, so when the outermost operation was a multiplication,
no additional $\alpha$s are to be applied---and why $\alpha$ raises
the weight of the root node $v_{\mathrm{low}}$ only. One would like
to think of \index{weighted tree} a weighted $n$-tree as a specification of how $n$
elements in a hom-algebra are being composed---for example the term
$\alpha^3( m( \alpha(m(\alpha^2(x_1),x_2)), \alpha^4(x_3) ))$ would
correspond to the weighted $3$-tree
\begin{equation*}
  \setlength{\unitlength}{.14cm}
  \begin{picture}(18,15)(-2,3)
    \drawline(8,3)(8,6)(0,14)
    \drawline(4,10)(8,14)
    \drawline(8,6)(16,14)
    \put(0,14){\circle*{.7}}
    \put(16,14){\circle*{.7}}
    \put(8,14){\circle*{.7}}
    \put(8,6){\circle*{.7}}
    \put(4,10){\circle*{.7}}
    \put(-1,16){\makebox(0,0){\small$(2)$}}
    \put(8,16){\makebox(0,0){\small$(0)$}}
    \put(16,16){\makebox(0,0){\small$(4)$}}
    \put(0,9){\makebox(0,0){\small$(1)$}}
    \put(12.5,5.5){\makebox(0,0){\small$(3)$}}
  \end{picture}
\end{equation*}
---but there is a catch: weights were supposed to appear only on the
internal vertices, not on the leaves, so the above is not strictly a
weighted tree as defined in~\cite{Yau:EnvLieAlg}. This choice of
disallowing weights on leaves corresponds to the dichotomy in the
definition of $\alpha_F$ for $F_{\mathrm{HNAs}}$: as the underlying
$\alpha$ on $1$-tree terms, but as a shift $[1]$ on $n$-tree terms
for \(n>1\). This in turn corresponds to the choice of making
$F_{\mathrm{HNAs}}$ a functor from \texttt{$\mc{R}$-hom-module} to
\texttt{$\mc{R}$-hom-algebra} rather than a functor from
\texttt{$\mc{R}$-module} to \texttt{$\mc{R}$-hom-algebra}; the former
produces objects that are less free than those of the latter. It is
arguably a strength of the universal algebra method that this
distinction appears so clearly, and also a strength that it prefers
the more general approach.

What one would do in the universal algebra setting to recover the
exact same $F_{\mathrm{HNAs}}(A)$ as Yau defined is to impose
\(\mathsf{a}(x) \equiv \alpha(x)\) for all \(x \in A\) as conditions upon a
congruence relation $\equiv$, and then take the quotient by that.
Technically, one would start out with the free $\mc{R}$-linear
$\Omega$-algebra $\mathcal{R}\{\Omega,A\}$ and impose upon it (in
addition to \(\mathsf{a}(x) \equiv \alpha(x)\)) the silly-looking
congruences
\begin{align} \label{Eq:linearity}
  rx \equiv{}& (rx) \text{,}&
  x+y \equiv{}& (x+y) &&
  \text{for all \(x,y \in A\) and \(r \in \mc{R}\);}
\end{align}
the technical point here is that addition and multiplication in the
left hand sides refer to the operations in $\mathcal{R}\{\Omega,A\}$,
whereas those on the right hand side refer to operations in $A$. What
happens is effectively the same as in the set-theoretic construction
of tensor product of modules.
\begin{subequations} \label{Eqs:UHLie}
Similarly, to recover the hom-associative $F_{\mathrm{HAs}}(A)$ one
would start out with $\mathcal{R}\{\Omega,A\}$ for
\(\Omega = \{ \mathsf{a}(), \mathsf{m}(,) \}\) and
quotient that by the smallest $\mathcal{R}$-linear $\Omega$-algebra
congruence relation $\equiv$ satisfying the linearity condition
\eqref{Eq:linearity} and
\begin{align}
  \mathsf{a}(x) \equiv{}& \alpha(x) && \text{for all \(x \in A\),}
    \label{Eq:UHLie:alpha}\\
  \mathsf{m}\bigl( \mathsf{a}(t_1), \mathsf{m}(t_2,t_3) \bigr) \equiv{}&
    \mathsf{m}\bigl( \mathsf{m}(t_1,t_2) , \mathsf{a}(t_3) \bigr)
    && \text{for all \(t_1,t_2,t_3 \in T(\Omega,A)\).}
    \label{Eq:UHLie:hom-ass}
\end{align}
Finally, in order to recover $U_{\mathrm{HLie}}(\mathfrak{g})$ for the
hom-Lie algebra \(\mathfrak{g} = (A, b, \alpha)\), one needs only
impose also the condition
\begin{equation} \label{Eq:UHLie:commutator}
  \mathsf{m}(x,y) - \mathsf{m}(y,x) \equiv b(x,y)
  \qquad\text{for all \(x,y \in A\)}
\end{equation}
on the congruence relation $\equiv$. What in this step has been
noticeably simplified in comparison to the presentation of
Subsection~\ref{Ssec:Yau-constructions} is that the infinite sequence
of alternatingly generating two-sided ideals and applying $\alpha_F$
has been compressed into just one operation, namely that of forming
the generated congruence relation. This has not made the whole thing
more effective, but it greatly simplifies reasoning about it.
\end{subequations}

For the reader approaching the above as was it a deformation of the
associative universal enveloping algebra of a Lie algebra, it might
instead be more natural to impose the conditions in the order
\eqref{Eq:UHLie:hom-ass} first, \eqref{Eq:UHLie:commutator} second,
and \eqref{Eq:UHLie:alpha} last. Doing so might also raise the
question of why one should stop there, as opposed to imposing some
additional condition on $\mathsf{a}$, such as \(\mathsf{a}\bigl( \mathsf{m}(t_1,t_2)
\bigr) \equiv \mathsf{m}\bigl( \mathsf{a}(t_1), \mathsf{a}(t_2) \bigr)\)? The reason
not to ask for that particular condition is that it forces the
resulting hom-algebra to be multiplicative, and it is easily checked
that if $\mathcal{A}$ is a multiplicative hom-algebra, then
$\mathcal{A}^-$ is multiplicative as well; doing so would immediately
destroy all hope of getting an enveloping algebra, unless the hom-Lie
algebra one started with was already multiplicative.

For a hom-Lie algebra presented in terms of a basis, such as the
$q$-deformed $\mathfrak{sl}_2$ of \eqref{Eq:q-def-sl2}, it is usually
more natural to seek its $U_{\mathrm{HLie}}$ by starting with only the
basis elements as constant symbols. In that example one would instead
take \(X = \{\mathsf{e},\mathsf{f},\mathsf{h}\}\) and seek a
congruence relation on $\mathbb{K}\{\Omega,X\}$, namely that which satisfies
\begin{subequations}
  \begin{align} \label{Eq:UHLie-sl2:alpha}
    \mathsf{a}(\mathsf{e}) \equiv{}& q\,\mathsf{e} \text{,}&
    \mathsf{a}(\mathsf{f}) \equiv{}& q^2\mathsf{f} \text{,}&
    \mathsf{a}(\mathsf{h}) \equiv{}& q\,\mathsf{h} \text{,}
  \end{align}
  \begin{equation} \label{Eq:UHLie-sl2:hom-ass}
    \mathsf{m}\bigl( \mathsf{a}(t_1), \mathsf{m}(t_2,t_3) \bigr) \equiv
    \mathsf{m}\bigl( \mathsf{m}(t_1,t_2) , \mathsf{a}(t_3) \bigr)
    \qquad\text{for all \(t_1,t_2,t_3 \in T(\Omega,X)\),}
  \end{equation}
  \begin{multline} \label{Eq:UHLie-sl2:commutator}
    \mathsf{m}(\mathsf{e},\mathsf{f}) - \mathsf{m}(\mathsf{f},\mathsf{e})
      \equiv \tfrac{1}{2}(1+q)\mathsf{h} \text{,} \qquad
    \mathsf{m}(\mathsf{e},\mathsf{h}) - \mathsf{m}(\mathsf{h},\mathsf{e})
      \equiv -2\mathsf{e} \text{,} \\
    \mathsf{m}(\mathsf{h},\mathsf{f}) - \mathsf{m}(\mathsf{f},\mathsf{h})
      \equiv -2q\,\mathsf{f} \text{.}
  \end{multline}
\end{subequations}
It suffices to impose hom-associativity for monomial terms (those
that can be formed using $\mathsf{a}$, $\mathsf{m}$, and elements of $X$ only) as
anything else is a finite linear combination of such terms.

In these equations, it should be observed that
\eqref{Eq:UHLie-sl2:alpha} and \eqref{Eq:UHLie-sl2:commutator} are
three discrete conditions each,
whereas \eqref{Eq:UHLie-sl2:hom-ass} imposing hom-associativity is an
infinite family of conditions. This is mirrored in \eqref{Eqs:UHLie}
by the difference in ranges: in \eqref{Eq:UHLie:alpha}, $x$ ranges
only over elements of $A$ (i.e., terms that are constants), but in
\eqref{Eq:UHLie:hom-ass} the variables range over arbitrary
terms. Comparing this to presentations of \emph{associative} algebras
on the form $\mc{R}\langle x,y,z \mid\nobreak \dots \rangle$, the
discrete conditions are like prescribing a relation between the
generators $x$, $y$, and $z$, whereas the infinite family used for
hom-associativity is like prescribing a Polynomial Identity for the
algebra. In rewriting theory, one would rather say
\eqref{Eq:UHLie-sl2:alpha} and \eqref{Eq:UHLie-sl2:commutator} are
equations of \emph{ground terms} whereas \eqref{Eq:UHLie-sl2:hom-ass}
is an equation involving variables (note that this is a different
sense of `variable' than in `variable' as generator of $\mc{R}\langle
x,y,z\rangle$).

The exact same analysis can be carried out for the hom-dialgebras and
diweighted trees of~\cite[Secs.~5--6]{Yau:EnvLieAlg}; the main point
of deviation is merely that one starts out the signature $\bigl\{
\mathsf{a}(), \mathsf{l}(,), \mathsf{r}(,) \bigr\}$ (because a dialgebra has separate
left multiplication~$\dashv$ and right multiplication~$\vdash$)
rather than the hom-algebra signature $\bigl\{ \mathsf{a}(), \mathsf{m}(,)
\bigr\}$. The diweighted tree encoding takes another step away from
the canonical formal terms by bundling into the weight the left\slash
right nature of each multiplication with the number of $\alpha$s to
apply after it. This is not quite as \emph{ad hoc} as it may seem,
because in non-hom dialgebras the associativity-like axioms have the
effect that general products of $n$ elements look like $(\dotsb (x_1
\vdash x_2) \vdash \dotsb ) \vdash x_m \dashv ( \dotsb \dashv
(x_{n-1} \dashv x_n) \dotsb )$; the left\slash right nature of a
multiplication is pretty much determined by its position in relation
to the switchover factor $x_m$, so there it makes sense to seek a
mostly unified encoding of the two. It is however far from clear that
the same would be true also for general hom-dialgebras; free
hom-associative algebras are certainly far more complicated than free
associative algebras.

\section{A newer setting: free operads}
\label{Sec:OperadUA}

One awkward point above is that for example the hom-associativity
axiom, despite in some sense being just one identity, required an
infinite family of equations to be imposed upon the free hom-associative
algebra; shouldn't there be a way of imposing it in just one step?
Indeed there is, but it requires broadening one's view, and to think
in terms of operads rather than algebras. A programme for this was
outlined in~\cite{ROP}.

\subsection{What is an operad?}

Nowadays, many introductions to the operad concept are available,
for example \cite{LodayValette,MSS:Operads,WhatIsAnOperad}.
What is important for us to stress is the analogy with associative
algebras: Operators acting on (say) a vector space can be added
together, taken scalar multiples of, and composed; any given set of
operators will generate an associative algebra under these
operations. When viewed as functions, operators are only univariate
however, so one might wonder what happens if we instead consider
multivariate functions (still mapping some number of elements from a
vector space into that same space)? One way of answering that
question is that we get an operad.

Composition in operads work as when one uses dots `$\cdot$' to mark
the position of ``an argument'' in an expression: From the bivariate
functions $f(\cdot,\cdot)$ and $g(\cdot,\cdot)$, one may construct
the compositions $f(g(\cdot,\cdot),\cdot)$, $f(\cdot,g(\cdot,\cdot))$,
$g(f(\cdot,\cdot),\cdot)$, and $g(\cdot,f(\cdot,\cdot))$, which are
all trivariate. Note in particular that the ``variable-based'' style
of composition that permits forming e.g.~the bivariate function
\((x,y) \mapsto f\bigl( g(x,y), y\bigr)\) from $f$ and $g$ is not
allowed in an operad, because it destroys multilinearity; \(f(x,y) =
xy\) is a bilinear map \(\mathbb{R}^2 \longrightarrow \mathbb{R}\), but \(h(x) = f(x,x) = x^2\)
is nonlinear.\footnote{
  It may then seem serendipitous that Cohn~\cite[p.~127]{Cohn:UA}
  citing Hall calls an algebraic structure with the variable-based
  form of composition a \emph{clone}, since it gets its extra power
  from being able to ``clone'' input data, but he explains it as being
  a contraction of `closed set of operations'. In the world of
  Quantum Mechanics, the well-known `No cloning' theorem forbids that
  kind of behaviour (essentially because it violates multilinearity),
  so by sticking to operads we take the narrow road.
} In an expression that composes several operad elements into one,
one is however usually allowed to choose where the various arguments
are used: $g(f(x_1,x_2),x_3)$, $g(f(x_2,x_1),x_3)$,
$g(f(x_3,x_1),x_2)$, etc.\@ are all possible as operad elements. This
is formalised by postulating a right action of the group $\Sigma_n$ of
permutations of $\{1,\dotsc,n\}$ on those operad elements which take
$n$ arguments; in function notation one would have
\(f(x_{\sigma^{-1}(1)}, \dotsc, x_{\sigma^{-1}(n)}) =
(f\sigma)(x_1,\dotsc,x_n)\).

More formally, \index{operad} an operad $\mc{P}$ is a family $\bigl\{ \mc{P}(n)
\bigr\}_{n\in\mathbb{N}}$ of sets, where $\mc{P}(n)$ is ``the set of those
operad elements which have arity $n$''. Alternatively, an operad
$\mc{P}$ can be viewed as a set with an arity function, in which case
$\mc{P}(n)$ is a shorthand for $\setOf[\big]{ a \in \mc{P} }{
\mathrm{arity}(a)=n }$. Both approaches are (modulo some formal nonsense)
equivalent, and we will employ both since some concepts are easier
under one approach and others are easier under the other.

Composition can be given the
form of composing one element \(a \in \mc{P}(m)\) with the $m$
elements \(b_i \in \mc{P}(n_i)\) for \(i=1,\dotsc,m\) (i.e., one for
each ``argument'' of $a$) to form \(a \circ b_1 \otimes \dotsb
\otimes b_m \in \mc{P}\bigl( \sum_{i=1}^m n_i \bigr)\); note that the
`$\circ$' and the $m-1$ `$\otimes$' are all part of the same operad
composition. (There is a more general concept called PROP where
$b_1 \otimes \dotsb \otimes b_m$ would be an actual element, but we
won't go into that here.) Operad composition is associative in the
sense that the unparenthesized expression
\begin{equation*}
  a \circ b_1 \otimes \dotsb \otimes b_\ell \circ
  c_1 \otimes \dotsb \otimes c_m
\end{equation*}
is the same whether it is interpreted as
\begin{equation*}
  (a \circ b_1 \otimes \dotsb \otimes b_\ell) \circ
  c_1 \otimes \dotsb \otimes c_m
\end{equation*}
or as
\begin{equation*}
  a \circ
  (b_1 \circ c_1 \otimes \dotsb \otimes c_{m_1}) \otimes
  \dotsb \otimes
  (b_\ell \circ c_{m_1+\dotsb+m_{\ell-1}+1} \otimes \dotsb \otimes
    c_{m_1+\dotsb+m_\ell})
\end{equation*}
where \(m = \sum_{i=1}^\ell m_i\) and \(b_i \in \mc{P}(m_i)\) for
\(i=1,\dotsc,\ell\).\footnote{
  As the number of ellipses ($\dots$) above indicate, the axioms for
  operads are somewhat awkward to state, even though they only
  express familiar properties of multivariate functions. The PROP
  formalism may therefore be preferable even if one is only
  interested in an operad setting, since the PROP axioms can be
  stated without constantly going `$\dots$'.
}

Since $\Sigma_n$ acts on the right of each $\mc{P}(n)$, this action
satisfies \((a\sigma)\tau = a(\sigma\tau)\) for all
\(a \in \mc{P}(n)\) and \(\sigma,\tau \in \Sigma_n\). There is also a
condition called equivariance that
\begin{equation*}
  (a \sigma) \circ b_1 \otimes \dotsb \otimes b_m =
  (a \circ b_{\sigma^{-1}(1)} \otimes \dotsb \otimes
  b_{\sigma^{-1}(m)}) \tau
\end{equation*}
where $\tau$ is a block version of $\sigma$, such that the $k$th
block has size equal to the arity of $b_k$. Finally, it is usually
also required that there is an identity element \(\mathrm{id} \in \mc{P}(1)\)
such that \(\mathrm{id} \circ a = a = a \circ \mathrm{id}^{\otimes n}\) for all \(a
\in \mc{P}(n)\) and \(n \in \mathbb{N}\).

\begin{example}
  For every set $A$, there is an operad $\mathrm{Map}_A$ such that
  $\mathrm{Map}_A(n)$ is the set of all maps \(A^n \longrightarrow A\); in particular,
  $\mathrm{Map}_A(0)$ may be identified with $A$. For \(a \in \mathrm{Map}_A(m)\) and
  \(b_i \in \mathrm{Map}_A(n_i)\) for \(i=1,\dotsc,m\), the composition $a
  \circ b_1 \otimes \dotsb \otimes b_m$ is defined by
  \begin{multline*}
    (a \circ b_1 \otimes \dotsb \otimes b_m)
      (x_{1,1},\dotsc,x_{1,n_1},\dotsc,x_{m,1},\dotsc,x_{m,n_m})
    = \\ =
    a\bigl( b_1(x_{1,1},\dotsc,x_{1,n_1}), \dotsc,
      b_m(x_{m,1},\dotsc,x_{m,n_m}) \bigr)
  \end{multline*}
  for all \(x_{1,1},\dotsc,x_{m,n_m} \in A\). \(\mathrm{id} \in
  \mathrm{Map}_A(1)\) is the identity map on $A$. The permutation action
  is defined by \((a\sigma)(x_1,\dotsc,x_n) =
  a(x_{\sigma^{-1}(1)}, \dotsc, x_{\sigma^{-1}(n)})\).
\end{example}

An alternative notation for composition is \(\gamma(a,b_1,\dotsc,b_m)
= a \circ b_1 \otimes \dotsb \otimes b_m\); that $\gamma$ is then
called the \emph{structure map}, or \emph{structure maps} if one
requires each map to have a signature on the form \(\mc{P}(m) \times
\mc{P}(n_1) \times \dotsb \times \mc{P}(n_m) \longrightarrow
\mc{P}\bigl( \sum_{i=1}^m n_i \bigr)\). An alternative composition
\emph{concept} is the \emph{$i$th composition} $\circ_i$, which
satisfies \(a \circ_i b = a \circ \mathrm{id}^{\otimes (i-1)} \otimes b \otimes
\mathrm{id}^{\otimes (m-i)}\) for \(a \in \mc{P}(m)\) and \(i=1,\dotsc,m\).
Note that $i$th composition, despite being a binary operation, is not
at all associative in the usual sense and expressions involving it must
therefore be explicitly parenthesized; operad associativity does
however imply that subexpressions can be regrouped (informally:
``parentheses can be moved around'') provided that the position
indices are adjusted accordingly.

An \emph{operad homomorphism} \(\phi\colon \mc{P} \longrightarrow \mc{Q}\) is a
map that is compatible with the operad structures of $\mc{P}$ and
$\mc{Q}$: \(\mathrm{arity}_{\mc{Q}}\bigl( \phi(a) \bigr) = \mathrm{arity}_{\mc{P}}(a)\),
\(\phi(a \circ\nobreak b_1 \otimes\nobreak \dotsb \otimes\nobreak b_m) =
\phi(a) \circ \phi(b_1) \otimes \dotsb \otimes \phi(b_m)\),
\(\phi(a\sigma) = \phi(a)\sigma\), and \(\phi(\mathrm{id}_{\mc{P}}) =
\mathrm{id}_{\mc{Q}}\) for all \(a \in \mc{P}(m)\), \(b_i \in \mc{P}\) for
\(i=1,\dotsc,m\), \(\sigma \in \Sigma_m\), and \(m\in\mathbb{N}\). A
\emph{suboperad} of $\mc{P}$ is a subset of $\mc{P}$ that is closed
under composition, closed under permutation action, and contains the
identity element. The operad \emph{generated} by some \(\Omega
\subseteq \mc{P}\) is the smallest suboperad of $\mc{P}$ that
contains $\Omega$.

Let $\mc{R}$ be an associative and commutative unital ring. An operad
$\mc{P}$ is said to be \emph{$\mc{R}$-linear} if (i)~each $\mc{P}(n)$
is an $\mc{R}$-module, (ii)~every structure map \((a,b_1,\dotsb,b_m)
\mapsto a \circ b_1 \otimes \dotsb \otimes b_m\) is $\mc{R}$-linear
in each argument separately, and (iii)~each action of a permutation
is $\mc{R}$-linear.

\begin{example}
  The $\mathrm{Map}_A$ operad is in general not $\mc{R}$-linear, but if $A$
  is an $\mc{R}$-module, then the suboperad $\mathcal{E}\mkern-2mu\mathit{nd}_A$ where
  $\mathcal{E}\mkern-2mu\mathit{nd}_A(n)$ consists of all $\mc{R}$-multilinear maps \(A^n \longrightarrow
  A\) will be $\mc{R}$-linear. $\mathcal{E}\mkern-2mu\mathit{nd}_A(0)$ can also be identified
  with $A$.
\end{example}

The operad concept defined above is sometimes called \index{symmetric operad} a
\emph{symmetric} operad, because of the actions on it of the symmetric
groups. Dropping everything involving permutations above, one instead
arrives at the concept of a \emph{nonsymmetric} or
\emph{non-$\Sigma$} operad. Much of what is done below could just as
well be done in the non-$\Sigma$ setting, but we find the symmetric
setting to be more akin to classical universal algebra.

\subsection{Universal algebra for operads}

Regarding universal algebra, an interesting thing about operads is
that they may serve as generalisations of both the algebra concept
and the signature concept. The way that an operad $\mc{P}$ may
generalise a signature $\Omega$ is that a set $A$ is said to be a
\emph{$\mc{P}$-algebra} if it is given with an operad homomorphism
\(\phi\colon \mc{P} \longrightarrow \mathrm{Map}_A\); the operation $f_x$ of some \(x
\in \mc{P}\) is then simply $\phi(x)$. Being an operad-algebra is
however a stronger condition than being a signature-algebra, because
the map $\phi$ will only be a homomorphism if every identity in
$\mc{P}$ is also satisfied in $\phi(\mc{P})$; this can be used to
impose ``laws'' on algebras, and several elementary operads are
defined to precisely this purpose: an algebra is an $\mathcal{A}\mkern-1mu\mathit{ss}$-algebra
iff it is associative, a $\mathcal{C}\mkern-1mu\mathit{om}$-algebra iff it is commutative, a
$\mathcal{L}\mkern-1mu\mathit{ie}$-algebra iff it is a Lie algebra, a $\mathcal{L}\mkern-1mu\mathit{eib}$-algebra
iff it is a Leibniz algebra, and so on. It is therefore only natural
that we will shortly construct an operad $\mathcal{H\mkern-4muA}\mkern-1mu\mathit{ss}$ whose algebras
are precisely the hom-associative algebras.

Before taking on that problem, we should however give an example of
how identities in an operad become laws of its algebras. To that end,
consider $\mathbb{N}$ as an operad by making \(\mathrm{arity}(n)=n\); this uniquely
defines the operad structure, since the arity of any particular
composition is given by the axioms, and that in turn determines the
value since every $\mathbb{N}(n)$ only has one element. What can now be said
about an $\mathbb{N}$-algebra $A$ if \(f\colon \mathbb{N} \longrightarrow \mathrm{Map}_A\) is the given
operad homomorphism? Clearly $f(2)\colon A^2 \longrightarrow A$ is a binary
operation. If \(\tau \in \Sigma_2\) is the transposition, one
furthermore finds that
\begin{equation*}
  f(2)(x,y) = f(2 \tau)(x,y) = \bigl( f(2) \tau \bigr)(x,y) =
  f(2)(y,x)
\end{equation*}
for all \(x,y \in A\), so $f(2)$ is commutative. Similarly it follows
from \(2 \circ 1 \otimes 2 = 3 = 2 \circ 2 \otimes 1\) that
\(f(2)\bigl( x, f(2)(y,z) \bigr) = f(2)\bigl( f(2)(x,y), z \bigr)\)
for all \(x,y,z \in A\), and thus $f(2)$ is associative. Finally one
may deduce from \(\mathrm{id} = 1 = 2 \circ 0 \otimes 1\) that $f(0)$ is a
unit element with respect to $f(2)$, so in summary any $\mathbb{N}$-operad
algebra carries an abelian monoid structure. This is almost the same
as $\mathcal{C}\mkern-1mu\mathit{om}$ is supposed to accomplish, so one might ask whether
in fact \(\mathcal{C}\mkern-1mu\mathit{om} = \mathbb{N}\), but traditionally $\mathcal{C}\mkern-1mu\mathit{om}$,
$\mathcal{A}\mkern-1mu\mathit{ss}$, etc.\@ are taken to be the $\mc{R}$-linear (for
whatever ring $\mc{R}$ of scalars is being considered) operads that
impose the indicated laws on their algebras. $\mathcal{C}\mkern-1mu\mathit{om}$ is thus
rather characterised by having \(\dim\mathcal{C}\mkern-1mu\mathit{om}(n) = 1\) for all $n$,
and may if one wishes be constructed as \(\mc{R} \times \mathbb{N}\).

While specific operads may sometimes be constructed through elementary
methods as above, the general approach to constructing an operad that
corresponds to a specific set of laws is instead the universal
algebraic one, which rather employs the point of view that an
operad is a generalisation of an algebra. Obviously any specific
$\Omega$-algebra $\bigl( A, \{f_x\}_{x\in\Omega} \bigr)$ gives rise
to the operad $\mathrm{Map}_A$, but the operad that more naturally
generalises $A$ as an $\Omega$-algebra is the suboperad of $\mathrm{Map}_A$
that is generated by $\{f_x\}_{x\in\Omega}$. Conversely, if $A$ is
supposed to be some kind of free algebra, one may choose to construct
it as the constant component of the corresponding free operad.

An equivalence relation $\equiv$ on an operad $\mc{P}$ is \index{operad congruence relation} an
\emph{operad congruence relation} if:
\begin{enumerate}
  \item
    \(a \equiv a'\) implies \(\mathrm{arity}(a) = \mathrm{arity}(a')\),
  \item
    \(a \equiv a'\) and \(b_i \equiv b_i'\) for \(i=1,\dotsc,\mathrm{arity}(a)\)
    implies \(a \circ b_1 \otimes \dotsb \otimes b_{\mathrm{arity}(a)} \equiv
    a' \circ b_1' \otimes \dotsb \otimes b_{\mathrm{arity}(a)}'\), and
  \item
    \(a \equiv a'\) implies \(a\sigma \equiv a'\sigma\) for all
    \(\sigma \in \Sigma_{\mathrm{arity}(a)}\).
\end{enumerate}
As for algebras, it follows that the quotient \(\mc{P}/{\equiv}\)
carries an operad structure, and the canonical map \(\mc{P} \longrightarrow
\mc{P}/{\equiv}\) is an operad homomorphism. If additionally $\mc{P}$
is $\mc{R}$-linear and $\equiv$ is an $\mc{R}$-module congruence
relation on each $\mc{P}(n)$, then $\equiv$ is an
\emph{$\mc{R}$-linear operad congruence relation} and the
corresponding \index{operad ideal} \emph{operad ideal} $\mc{I}$ is defined by \(\mc{I}(n)
= \setOf[\big]{ a \in \mc{P}(n) }{ a \equiv 0 }\) for all \(n \in
\mathbb{N}\) (note that each $\mc{P}(n)$ has a separate $0$ element).
Equivalently, \(\mc{I} \subseteq \mc{P}\) is an operad ideal if each
$\mc{I}(n)$ is a submodule of $\mc{P}(n)$, each $\mc{I}(n)$ is closed
under the action of $\Sigma_n$, and \(a \circ b_1 \otimes \dotsb
\otimes b_m \in \mc{I}\) whenever at least one of $a,b_1,\dotsc,b_m$
is an element of $\mc{I}$.

So far, the operad formalism is very similar to that for algebras,
but an important difference occurs when one wishes to impose laws on
a congruence. For an algebra, the hom-associativity condition
\eqref{Eq:Kongr.ass.algebra} required an infinite family of
identities. The corresponding condition in the operad $\mathrm{Map}_A$
requires only the single identity \(f_\mathsf{m} \circ f_\mathsf{a} \otimes
f_\mathsf{m} \equiv f_\mathsf{m} \circ f_\mathsf{m} \otimes f_\mathsf{a}\), as the infinite
family is recovered from this using composition on the right:
\(f_\mathsf{m} \circ f_\mathsf{a} \otimes f_\mathsf{m} \circ t_1 \otimes t_2 \otimes t_3
\equiv f_\mathsf{m} \circ f_\mathsf{m} \otimes f_\mathsf{a} \circ t_1 \otimes t_2
\otimes t_3\). The $\mathcal{A}\mkern-1mu\mathit{ss}$, $\mathcal{C}\mkern-1mu\mathit{om}$, $\mathcal{L}\mkern-1mu\mathit{eib}$, etc.\@
operads can all be seen to be finitely presented, and the same holds
for their free algebras if generated as the arity~$0$ component of an
operad, even though they are not finitely presented within the
$\Omega$-algebra formalism!

The universal property satisfied by the free operad $\mc{F}$ on
$\Omega$ is that it is given with an arity-preserving map \(i\colon
\Omega \longrightarrow \mc{F}\) such that there for every operad $\mc{P}$ and
every arity-preserving map \(j\colon \Omega \longrightarrow \mc{P}\) exists a
unique operad homomorphism \(\phi\colon \mc{F} \longrightarrow \mc{P}\) such
that \(j = \phi \circ i\). A practical construction of that free
operad is to let $\mc{F}(n)$ be the set of all $n$-variable
\emph{contexts}~\cite[p.~17]{TATA}, but since we'll anyway need some
notation for these, we might as well give an explicit definition
based on Polish notation for expressions.

\begin{definition}
  A (left-)\emph{Polish term} on the signature $\Omega$ is a finite
  word on $\Omega \cup \{\Box_i\}_{i=1}^\infty$ (where it is presumed
  that \(\Box_i \notin \Omega\) and \(\mathrm{arity}(\Box_i) = 0\) for all $i$),
  which is either $\Box_i$ for some \(i \geqslant 1\), or
  $x \mu_1 \dotsb \mu_n$ where \(x \in \Omega\), \(n = \mathrm{arity}(x)\), and
  \(\mu_1,\dotsc,\mu_m\) are themselves Polish terms on $\Omega$.
  A Polish term is an \emph{$n$-context} if each symbol
  $\Box_i$ for \(i=1,\dotsc,n\) occurs exactly once and no symbol
  $\Box_i$ with \(i>n\) occurs at all.
  For $\Box_1,\dotsc,\Box_9$ we will write $1,\dotsc,9$ for short.
  Denote by $\mc{Y}_\Omega(n)$ the set of all $n$-contexts on
  $\Omega$.

  The action of \(\sigma \in \Sigma_n\) on $\mc{Y}_\Omega(n)$ is that
  each $\Box_i$ is replaced by $\Box_{\sigma^{-1}(i)}$. The
  composition $\mu \circ \nu_1 \otimes \dotsb \otimes \nu_n$ is a
  combined substitution and renumbering: first each $\Box_i$ in $\mu$
  is replaced by the corresponding $\nu_i$, then the $\Box_k$'s in
  the composite term are renumbered so that the term becomes a
  context---preserving the differences within each $\nu_i$ and giving
  $\Box_k$'s from $\nu_i$ lower indices than those from $\nu_j$
  whenever \(i<j\).

  For any associative and commutative unital ring $\mc{R}$, and for
  every \(n\in\mathbb{N}\), denote by $\mc{R}\{\Omega\}(n)$ the set of all
  formal $\mc{R}$-linear combinations of elements of
  $\mc{Y}_\Omega(n)$. Extend the action of \(\sigma \in \Sigma_n\) on
  $\mc{Y}_\Omega(n)$ to $\mc{R}\{\Omega\}(n)$ by linearity. Let
  \(\mc{R}\{\Omega\} = \bigcup_{n\in\mathbb{N}} \mc{R}\{\Omega\}(n)\). Extend
  the composition on $\mc{Y}_\Omega$ to $\mc{R}\{\Omega\}$ by
  multilinearity. When $\mc{Y}_\Omega$ is viewed as a subset of
  $\mc{R}\{\Omega\}$, its elements are called \emph{monomials}.

  With \(\mathrm{id} = 1 = \Box_1\), this makes $\mc{Y}_\Omega$ the free
  operad on $\Omega$ and $\mc{R}\{\Omega\}$ is the free $\mc{R}$-linear
  operad on $\Omega$.
\end{definition}

For \(\Omega = \bigl\{ \mathsf{x}, \mathsf{a}(), \mathsf{m}({,}) \bigr\}\), one may
thus find in $\mc{Y}_\Omega(0)$ elements such as $\mathsf{x}$, $\mathsf{a}\mathsf{x}$,
$\mathsf{m}\mathsf{x}\mathsf{x}$, $\mathsf{a}\mathsf{m}\mathsf{x}\mathsf{x}$, and $\mathsf{m}\mathsf{a}\mathsf{x}\mathsf{x}$ which in
parenthesized notation would rather have been written as $\mathsf{x}$,
$\mathsf{a}(\mathsf{x})$, $\mathsf{m}(\mathsf{x},\mathsf{x})$, $\mathsf{a}(\mathsf{m}(\mathsf{x},\mathsf{x}))$, and
$\mathsf{m}(\mathsf{a}(\mathsf{x}),\mathsf{x})$ respectively. In $\mc{Y}_\Omega(1)$ we
similarly find $1$, $\mathsf{a}1$, $\mathsf{a}\mathsf{a}1$, $\mathsf{m}\mathsf{x}1$, $\mathsf{m}1\mathsf{x}$,
and $\mathsf{m}\mathsf{a}\mathsf{x}\mathsf{m}1\mathsf{x}$ which in parenthesized notation could
have been written as $\Box_1$, $\mathsf{a}(\Box_1)$, $\mathsf{a}(\mathsf{a}(\Box_1))$,
$\mathsf{m}(\mathsf{x},\Box_1)$, $\mathsf{m}(\Box_1,\mathsf{x})$, and
$\mathsf{m}(\mathsf{a}(\mathsf{x}),\mathsf{m}(\Box_1,\mathsf{x}))$. In $\mc{R}\{\Omega\}(2)$ there
are elements such as $\mathsf{m}12 - \mathsf{m}21$ and $\mathsf{m}12 + \mathsf{m}21$ which
would be mapped to $0$ by any operad homomorphism $f$ to $\mathrm{Map}_A$ for
which $f(\mathsf{m})$ is commutative or anticommutative respectively.
Finally there is in $\mc{R}\{\Omega\}(3)$ the elements $\mathsf{m}1\mathsf{m}23 -
\mathsf{m}\mathsf{m}123$ and $\mathsf{m}\mathsf{a}1\mathsf{m}23 - \mathsf{m}\mathsf{m}12\mathsf{a}3$ which have
similar roles with respect to associativity and hom-associativity
respectively.

A practical problem, which is mostly common to the Polish and the
parenthesized notations, is that it can be difficult to grasp the
structure of one of these expressions just from a quick glance at the
written forms of them; small expressions may be immediately
recognised by the trained eye, but larger expressions almost always
require a conscious effort to parse. This is unfortunate, as the
exact structure is very important when working in a setting this
general. The structure can however be made more visible by
\emph{drawing} expressions rather than \emph{writing} them;
informally one depicts an expression using its abstract syntax tree,
but those of a more formalistic persuasion may think of these
drawings as graph-theoretical objects underlying the trees~(in the
sense of~\cite[pp.~15--16]{TATA}) of these terms. A few examples can
be
\begin{align*}
  \mathsf{m}12 ={}& \left[ \begin{sdpgf}{0}{0}{60}{-124}{0.2pt}
    \m 30 -119 \L 0 41 \S \m 19 -51 \C -4 4 0 27 0 15 \S \m 41 -51 \C
    4 4 0 27 0 15 \S \ov 14 -78 32 32 \S
  \end{sdpgf} \right] &
  \mathsf{m}21 ={}& \left[ \begin{sdpgf}{0}{0}{60}{-124}{0.2pt}
    \m 41 -51 \C 20 20 -46 10 0 16 \S \m 19 -51 \C -20 20 46 10 0 16
    \S \m 30 -119 \L 0 41 \S \ov 14 -78 32 32 \S
  \end{sdpgf} \right] &
  \mathsf{m}\mathsf{m}312 ={}& \left[ \begin{sdpgf}{0}{0}{90}{-196}{0.2pt}
    \m 41 -51 \C 20 20 -46 10 0 16 \S \m 56 -123 \C 13 13 6 20 0 18 \L
    0 20 \C 0 19 -30 9 0 19 \S \m 19 -51 \C -34 34 90 -6 0 18 \S \m 34
    -123 \C -4 4 0 26 0 15 \S \m 45 -191 \L 0 41 \S \ov 14 -78 32 32
    \S \ov 29 -150 32 32 \S
  \end{sdpgf} \right]
\end{align*}
and several more can be found below. A Polish term may even be read
as a direct instruction for how to draw these trees: in order to draw
\(\mu = x \nu_1 \dotsb \nu_{\mathrm{arity}(x)}\), first draw a vertex for $x$ as
the root, and then draw the subtrees $\nu_1$ through $\nu_{\mathrm{arity}(x)}$
above the $x$ vertex and side by side, letting the order of edges
along the top of a vertex show the order of the subexpressions. The
``inputs'' $\Box_k$ of a context are represented by edges to the top
side of the drawing, with $\Box_1$ being leftmost, $\Box_2$ being
second to left, and so on.

\begin{definition}
  An element of $\mc{Y}_\Omega(n)$ is said to be \emph{plane} if the
  $\Box_i$ symbols (if any) occur in ascending order: none to the
  left of $\Box_1$, only $\Box_1$ to the left of $\Box_2$, and so on.
  (Equivalently, the drawing procedure described above will not
  produce any crossing edges.) An element of $\mc{R}\{\Omega\}(n)$ is
  \emph{plane} if it is a linear combination of plane elements.
  An element of $\mc{R}\{\Omega\}(n)$ is \emph{planar} if it is of
  the form $a \sigma$ for some plane \(a \in \mc{R}\{\Omega\}(n)\)
  and \(\sigma \in \Sigma_n\). Finally, an ideal in
  $\mc{R}\{\Omega\}$ is said to be \emph{planar} if it is generated
  by planar elements.
\end{definition}

Elements in a planar ideal need not be planar, but every element in
a planar ideal can be written as a sum of planar elements that are
themselves in the ideal.

\subsection{The Diamond Lemma for operads}
\label{Ssec:DL}

This and the following sections rely heavily on results and concepts
from~\cite{GFDL}. We try to always give a reference, where a concept is
first used that will not be explained further here, to the exact
definition in~\cite{GFDL} of that concept.

Let a signature $\Omega$ and an associative and commutative unital
ring $\mc{R}$ be given. Consider the free $\mc{R}$-linear operad
$\mc{R}\{\Omega\}$ and its suboperad of monomials $\mc{Y}_\Omega$.
Let $V(i,j)$ be the set of all maps \(\mc{R}\{\Omega\}(j) \longrightarrow
\mc{R}\{\Omega\}(i)\) that are on the form
\begin{equation} \label{Eq:V-map}
  a \mapsto \bigl( \lambda \circ_k
  (a \circ \nu_1 \otimes \dotsb \otimes \nu_j) \bigr) \sigma
\end{equation}
where \(\nu_r \in \mc{Y}_\Omega(n_r)\) for \(r=1,\dotsc,j\),
\(\lambda \in \mc{Y}_\Omega(\ell)\), \(\ell \geqslant k \geqslant 1\),
\(\sigma \in \Sigma_i\), and \(i = \ell-1 + n_1 + \dotsc + n_j\).
The family \(V = \bigcup_{i,j \in \mathbb{N}} V(i,j)\)
is then a category~\cite[Def.~6.8]{GFDL}, and each \(v \in V(i,j)\) is
an injection \(\mc{Y}_\Omega(j) \longrightarrow \mc{Y}_\Omega(i)\). Also note
that with respect to the tree (drawing) forms of monomials, each \(v
\in V(i,j)\) defines an embedding of \(\mu \in \mc{Y}_\Omega(j)\)
into $v(\mu)$; this will be important for identifying $V$-critical
ambiguities.

\begin{definition}
  A \emph{rewriting system}  \index{rewriting system} for $\mc{R}\{\Omega\}$ is a set \(S =
  \bigcup_{i \in \mathbb{N}} S(i)\) such that \(S(i) \subseteq \mc{Y}_\Omega(i)
  \times \mc{R}\{\Omega\}(i)\) for all \(i\in\mathbb{N}\). The elements of a
  rewrite system are called \emph{(rewrite) rules}. The components of
  a rule $s$ are often denoted $\mu_s$ and $a_s$, meaning \(s =
  (\mu_s,a_s)\) for all rules $s$.

  For a given rewriting system $S$, define \(T_1(S)(i) =
  \bigcup_{j \in \mathbb{N}} \{t_{v,s}\}_{v \in V(i,j), s \in S(j)}\), where
  $t_{v,s}$ is the $\mc{R}$-linear map \(\mc{R}\{\Omega\}(i) \longrightarrow
  \mc{R}\{\Omega\}(i)\) which satisfies
  \begin{equation} \label{KonEq:t_v,s-reduktion}
    t_{v,s}(\lambda) = \begin{cases}
      v(a_s)& \text{if \(\lambda = v(\mu_s)\),}\\
      \lambda& \text{otherwise,}
    \end{cases}
    \qquad\text{for all \(\lambda \in \mc{Y}_\Omega(i)\).}
  \end{equation}
  The elements of $T_1(S)(i)$ are called the \emph{simple reductions}
  (with respect to $S$) on $\mc{R}\{\Omega\}(i)$.
  For each \(i \in \mathbb{N}\), let $T(S)(i)$ be the set of all finite
  compositions of maps in $T_1(S)(i)$.

  Sometimes, a claim that \(t_{v,s}(a) = b\) is more conveniently
  written as \(a \stackrel{s}{\rightarrow} b\) (for example when
  several such claims are being chained, as in \(a
  \stackrel{s_1}{\rightarrow} b \stackrel{s_2}{\rightarrow} c\)).
  When doing that, we may indicate what $v$ is by inserting
  parentheses into the Polish term on the tail side of the arrow that
  is being changed by the simple reduction: the outer parenthesis
  then surrounds the $\mu_s \circ \nu_1 \otimes \dotsb \otimes \nu_j$
  part, whereas inner
  parentheses surround the various $\nu_k$ subterms of it, although
  these inner parentheses are for brevity omitted where \(\nu_k =
  \mathrm{id}\). See Example~\ref{Ex:AssOperad} for some examples of this.
\end{definition}

With respect to $T(S)$, all
maps in $V$ are absolutely advanceable~\cite[Def.~6.1]{GFDL}. The
following subsets of $\mc{R}\{\Omega\}$ are defined
in~\cite[Def.~3.4]{GFDL}, but so important that we include the
definitions here:
\begin{align*}
  \mathrm{Irr}(S)(i) ={}& \setOf[\big]{ a \in \mc{R}\{\Omega\}(i) }{
    \text{\(t(a) = a\) for all \(t \in T(S)(i)\)} }\text{,}\\
  \mc{I}(S)(i) ={}& \sum_{t \in T(S)(i)}
    \setOf[\big]{ a - t(a) }{ a \in \mc{R}\{\Omega\}(i) }
\end{align*}
for all \(i \in \mathbb{N}\). We write \(a \equiv b \pmod{S}\)
for \(a - b \in \mc{I}(S)\). An \(a \in \mathrm{Irr}(S)\) is said to be a
\emph{normal form} of \(b \in \mc{R}\{\Omega\}\) if
\(a \equiv b \pmod{S}\).

$\mc{I}(S)$ is the operad ideal in $\mc{R}\{\Omega\}$ that is
generated by $\setOf{ \mu_s - a_s }{ s \in S}$. $\mathrm{Irr}(S)$ is what we
want to use as model for the quotient $\mc{R}\{\Omega\} \big/
\mc{I}(S)$, and we use Theorem~\ref{S:ODL} below to tell us that it
really is. An \emph{ambiguity}~\cite[Def.~5.9]{GFDL} of $T_1(S)(i)$
is a triplet $(t_{v_1,s_1},\mu,t_{v_2,s_2})$ such that
\(v_1(\mu_{s_1}) = \mu = v_2(\mu_{s_2})\). The ambiguity is
\emph{plane} if $\mu$ is plane.

\begin{theorem}[Basic Diamond Lemma for Symmetric Operads]
  \label{S:ODL}
\index{Diamond Lemma for Symmetric Operads}

  If $P(i)$ is a well-founded partial order on $\mc{Y}_\Omega(i)$
  such that \(a_s \in \mathrm{DSM}\bigl( \mu_s, P(i) \bigr)\) for all
  \(i\in\mathbb{N}\), and moreover for all \(i,j \in \mathbb{N}\) every \(v \in
  V(i,j)\) is monotone~\cite[Def.~6.4]{GFDL} with respect to $P(j)$
  and $P(i)$, then the following claims are equivalent:
  \begin{description}
    \item[\parenthetic{a}]
      For all \(i\in\mathbb{N}\), every ambiguity of $T_1(S)(i)$ is
      resolvable~\cite[Def.~5.9]{GFDL}.
    \item[\parenthetic{a\('\)}]
      For all \(i\in\mathbb{N}\), every $V$-critical~\cite[Def.~6.8]{GFDL}
      ambiguity of $T_1(S)(i)$ is resolvable.
    \item[\parenthetic{a\('\)\('\)}]
      For all \(i\in\mathbb{N}\), every plane $V$-critical ambiguity of
      $T_1(S)(i)$ is resolvable.
    \item[\parenthetic{b}]
      Every element of $\mc{R}\{\Omega\}$ is
      persistently~\cite[Def.~4.1]{GFDL} and uniquely~\cite[Def.~4.6]{GFDL}
      reducible, with normal form map $t^S$~\cite[Def.~4.6]{GFDL}.
    \item[\parenthetic{c}]
      Every element of $\mc{R}\{\Omega\}$ has a unique normal form, i.e.,
      \(\mc{R}\{\Omega\}(i) = \mc{I}(S)(i) \oplus \mathrm{Irr}(S)(i)\) for
      all \(i\in\mathbb{N}\).
  \end{description}
\end{theorem}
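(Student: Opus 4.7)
The plan is to reduce the theorem to the general Diamond Lemma framework developed in~\cite{GFDL}. The concrete setup has been carefully engineered so the general hypotheses hold: each $v \in V(i,j)$ is an injection $\mc{Y}_\Omega(j) \longrightarrow \mc{Y}_\Omega(i)$, the family $V$ forms a category closed under composition, and by construction every $v \in V$ is absolutely advanceable with respect to $T(S)$ (simple reductions act through the embedding and leave other monomials untouched). Combined with the well-foundedness of $P$ and the hypothesised monotonicity of $V$-maps, these are precisely the preconditions required by the general framework. Consequently, the equivalences \parenthetic{a}~$\Leftrightarrow$~\parenthetic{b}~$\Leftrightarrow$~\parenthetic{c} can be quoted directly: \parenthetic{c}~$\Rightarrow$~\parenthetic{b} is immediate because unique normal forms give unique reducibility; \parenthetic{b}~$\Rightarrow$~\parenthetic{a} because any ambiguity admits a common descendant via the unique normal form; and \parenthetic{a}~$\Rightarrow$~\parenthetic{c} is the core diamond argument, proved by well-founded induction on $P$ using resolvability to reconcile competing reductions.

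The forward implications \parenthetic{a}~$\Rightarrow$~\parenthetic{a\('\)}~$\Rightarrow$~\parenthetic{a\('\)\('\)} are trivial inclusions of classes of ambiguities, so the substance lies in the reverse directions. For \parenthetic{a\('\)}~$\Rightarrow$~\parenthetic{a}, I would argue that any ambiguity $(t_{v_1,s_1},\mu,t_{v_2,s_2})$ factors through a $V$-critical one: the two embeddings $v_1(\mu_{s_1}) = \mu = v_2(\mu_{s_2})$ have a largest common outer context $w \in V(i,k)$ together with factorisations $v_1 = w \circ v_1'$ and $v_2 = w \circ v_2'$ such that $(t_{v_1',s_1}, v_1'(\mu_{s_1}), t_{v_2',s_2})$ is $V$-critical. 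Resolvability of the critical ambiguity then transports along $w$, using that $w$ acts by embedding and commutes appropriately with reductions.

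For \parenthetic{a\('\)\('\)}~$\Rightarrow$~\parenthetic{a\('\)}, I would exploit the fact that the permutation action is built into the shape~\eqref{Eq:V-map} of $V$-maps. Given a $V$-critical ambiguity $(t_{v_1,s_1},\mu,t_{v_2,s_2})$ where $\mu$ is not plane, choose $\sigma \in \Sigma_i$ so that $\mu\sigma$ is plane; then replacing the terminal permutation factor of each $v_k$ by its composition with $\sigma$ yields a new pair of $V$-maps $v_k'$ exhibiting the plane monomial $\mu\sigma$ as a $V$-critical ambiguity. Resolvability of this plane ambiguity then pulls back along $\sigma^{-1}$ to give resolvability of the original. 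The equivariance axiom for operads and the $\mc{R}$-linearity of the permutation action ensure that this transformation preserves the simple-reduction relation.

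The main obstacle is the careful bookkeeping in the last step: verifying that absorbing $\sigma$ into the $V$-maps genuinely yields elements of $V$ in the prescribed form, and that resolvability is preserved under this manipulation. This requires tracking how $\sigma$ permutes the $\nu_r$ subterms of a $V$-map through the equivariance relation $(a\sigma) \circ b_1 \otimes \dotsb \otimes b_m = (a \circ b_{\sigma^{-1}(1)} \otimes \dotsb \otimes b_{\sigma^{-1}(m)})\tau$, and checking that the partial order $P$ behaves compatibly under permutation (which should follow from the monotonicity assumption applied to the $V$-maps that encode $\sigma$ as their terminal permutation factor).
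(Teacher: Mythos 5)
Your proposal follows essentially the same route as the paper: quote the general framework of~\cite{GFDL} (its Theorems~5.11 and~6.9 together with Construction~7.2) for the equivalence of \parenthetic{a}, \parenthetic{a\('\)}, \parenthetic{b}, \parenthetic{c}, and then derive \parenthetic{a\('\)}\ from \parenthetic{a\('\)\('\)}\ by planarising the site with a permutation $\sigma$. The ``careful bookkeeping'' you flag as the main obstacle largely dissolves once one observes, as the paper does, that \(w\colon a \mapsto a\sigma\) and \(w^{-1}\colon a \mapsto a\sigma^{-1}\) are themselves elements of \(V(i,i)\), so the original ambiguity is an absolute shadow of the plane \(V\)-critical one and \cite[Lemma~6.2]{GFDL} transfers resolvability directly.
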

\begin{proof}
  Taking \(\mc{M}(i) = \mc{R}\{\Omega\}(i)\) and \(\mc{Y}(i) =
  \mc{Y}_\Omega(i)\), this is mostly a combination of Theorem~5.11,
  Theorem~6.9, and Construction~7.2 of~\cite{GFDL}. Theorem~5.11
  provides the basic equivalence of \parenthetic{a}, \parenthetic{b},
  and \parenthetic{c}. Theorem~6.9 says \parenthetic{a\('\)} is
  sufficient, as resolvability implies resolvability relative to $P$.
  Construction~7.2 shows the $V$, $P$, and $T_1(S)$ defined above
  fulfill the conditions of these two theorems.

  What remains to show is that \parenthetic{a\('\)\('\)} implies
  \parenthetic{a\('\)}. Let $(t_{v_1,s_1},\mu,t_{v_2,s_2})$ be a
  $V$-critical ambiguity of some $T_1(S)(i)$, and let \(\sigma \in
  \Sigma_i\) be such that $\mu\sigma$ is plane. Then \(w\colon a \mapsto
  a \sigma\) and \(w^{-1}\colon a \mapsto a \sigma^{-1}\) are both
  elements of $V(i,i)$, and hence $(t_{v_1,s_1},\mu,t_{v_2,s_2})$ is
  an absolute shadow of the plane and $V$-critical ambiguity
  $(t_{w \circ v_1,s_1},\mu\sigma,t_{w \circ v_2,s_2})$. The latter is
  resolvable by \parenthetic{a\('\)\('\)}, so it follows from
  \cite[Lemma~6.2]{GFDL} that the former is resolvable as well.
\end{proof}

\begin{remark}
  Theorem~\ref{S:ODL} may also be viewed as a slightly streamlined
  version of~\cite[Cor.~10.26]{NetworkRewriting}, but that approach
  is probably overkill for readers uninterested in the PROP setting.
\end{remark}

It may be observed that $\mathrm{Irr}(S)(i)$ is closed under the action of
$\Sigma_i$, regardless of $S$; this is thus a restriction of the
applicability of this diamond lemma, as its conditions can never be
fulfilled when $\mc{R}\{\Omega\}(i) \big/ \mc{I}(S)(i)$ is fixed
under a non-identity element of $\Sigma_i$. All of that is however a
consequence of the choice of $V$, and a different choice of $V$
(e.g.~excluding the permutation $\sigma$ from \eqref{Eq:V-map}) will
result in a different (but very similar-looking) diamond lemma, with
a different set of critical ambiguities and a different domain of
applicability.

For an ambiguity $(t_{v_1,s_1},\mu,t_{v_2,s_2})$ to be $V$-critical
in this basic diamond lemma, it is necessary that the graph-theoretical
embeddings into $\mu$ of $\mu_{s_1}$ and $\mu_{s_2}$ have at least one
vertex in common (otherwise the ambiguity is a montage) and furthermore
these two embeddings must cover $\mu$ (otherwise the ambiguity is a
proper $V$-shadow). Enumerating the critical ambiguities formed by
two given rules $s_1$ and $s_2$ is thus mostly a matter of listing
the ways of superimposing the two trees $\mu_{s_1}$ and $\mu_{s_2}$.

\begin{example}[$\mathcal{A}\mkern-1mu\mathit{ss}$ operad] \label{Ex:AssOperad}
  Let \(\Omega = \bigl\{ \mathsf{m}({,}) \bigr\}\). Consider the rewriting
  system \(S = \{s\}\) where \(s = (\mathsf{m} 1\mathsf{m} 2 3, \mathsf{m}\mathsf{m} 1 2
  3)\). Graphically, this rule takes the form
  \begin{equation} \label{Eq:associativitet}
    \left[ \begin{sdpgf}{0}{0}{90}{-196}{0.2pt}
      \m 34 -123 \C -13 13 -6 20 0 18 \L 0 20 \L 0 47 \S \m 56 -123 \C
      4 4 0 26 0 15 \S \m 49 -51 \C -4 4 0 27 0 15 \S \m 71 -51 \C 4 4
      0 27 0 15 \S \m 45 -191 \L 0 41 \S \ov 44 -78 32 32 \S \ov 29
      -150 32 32 \S
    \end{sdpgf} \right]
    \rightarrow
    \left[ \begin{sdpgf}{0}{0}{90}{-196}{0.2pt}
      \m 34 -123 \C -4 4 0 26 0 15 \S \m 19 -51 \C -4 4 0 27 0 15 \S
      \m 41 -51 \C 4 4 0 27 0 15 \S \m 56 -123 \C 13 13 6 20 0 18 \L 0
      20 \L 0 47 \S \m 45 -191 \L 0 41 \S \ov 14 -78 32 32 \S \ov 29
      -150 32 32 \S
    \end{sdpgf} \right]
  \end{equation}
  The (non-unital) \emph{associative operad} $\mathcal{A}\mkern-1mu\mathit{ss}$ over $\mc{R}$
  can then be defined as the quotient $\mc{R}\{\Omega\} \big/ \mc{I}(S)$.

  One way of partially ordering trees that will be compatible with
  this rule is to count, separately for each input, the number of
  times the path from that input to the root enters an $\mathsf{m}$ vertex
  from the right; denote this number for input $i$ of the tree $\mu$
  by $h_i(\mu)$. Then define \(\mu \geqslant \nu \pin{P'(n)}\) if and
  only if \(h_i(\mu) \geqslant h_i(\nu)\) for all $i=1,\dotsc,n$, and
  define a partial order $P(n)$ by \(\mu > \nu \pin{P(n)}\) if and
  only if \(\mu \geqslant \nu \pin{P'(n)}\) and \(\mu \not\leqslant
  \nu \pin{P'(n)}\), i.e., let $P(n)$ be the restriction to a partial
  order of the quasi-order $P'(n)$. For the left hand side of $s$
  above one has \(h_1=0\), \(h_2=1\), and \(h_3=2\) whereas the left
  hand side has \(h_1=0\), \(h_2=1\), and \(h_3=1\), so $S$ is indeed
  compatible with $P$. Furthermore $P(n)$ is clearly well-founded;
  $\sum_{i=1}^n h_i(\mu)$ is simply the rank of $\mu$ in the poset
  $\bigl( \mc{Y}_\Omega(n), P(n) \bigr)$.

  The only plane critical ambiguity of $S$ is $(t_{v_1,s}, \mathsf{m} 1 \mathsf{m}
  2 \mathsf{m} 3 4, t_{v_2,s})$, where \(v_1(\mu) = \mu \circ_3 \mathsf{m} 12\)
  and \(v_2(\mu) = \mathsf{m}12 \circ_2 \mu\). This is resolved as follows:
  \[
    \begin{array}{c}
      \left[ \begin{array}{c}
        \begin{sdpgf}{0}{0}{120}{-268}{0.2pt}
          \m 49 -195 \C -13 13 -6 20 0 18 \L 0 20 \C 0 18 -15 16 0 18
          \L 0 20 \L 0 47 \S \m 64 -123 \C -13 13 -6 20 0 18 \L 0 20
          \L 0 47 \S \m 86 -123 \C 4 4 0 26 0 15 \S \m 79 -51 \C -4 4
          0 27 0 15 \S \m 101 -51 \C 4 4 0 27 0 15 \S \m 71 -195 \C 4
          4 0 26 0 15 \S \m 60 -263 \L 0 41 \S \ov 74 -78 32 32 \S \ov
          59 -150 32 32 \S \ov 44 -222 32 32 \S
        \end{sdpgf} \\
        (\mathsf{m} 1 \mathsf{m} 2 (\mathsf{m} 3 4))
      \end{array} \right]
      \xfmapsto{s}
      \left[ \begin{array}{c}
        \begin{sdpgf}{0}{0}{120}{-196}{0.2pt}
          \m 49 -123 \C -12 12 -7 17 0 16 \S \m 19 -51 \C -4 4 0 27
          0 15 \S \m 41 -51 \C 4 4 0 27 0 15 \S \m 71 -123 \C 12 12
          7 17 0 16 \S \m 79 -51 \C -4 4 0 27 0 15 \S \m 101 -51 \C
          4 4 0 27 0 15 \S \m 60 -191 \L 0 41 \S \ov 14 -78 32 32 \S
          \ov 74 -78 32 32 \S \ov 44 -150 32 32 \S
        \end{sdpgf} \\
        (\mathsf{m} (\mathsf{m} 1 2) \mathsf{m} 3 4)
      \end{array} \right]
      \xfmapsto{s}
      \left[ \begin{array}{c}
        \begin{sdpgf}{0}{0}{120}{-268}{0.2pt}
          \m 34 -123 \C -4 4 0 26 0 15 \S \m 19 -51 \C -4 4 0 27 0 15
          \S \m 41 -51 \C 4 4 0 27 0 15 \S \m 56 -123 \C 13 13 6 20 0
          18 \L 0 20 \L 0 47 \S \m 71 -195 \C 13 13 6 20 0 18 \L 0 20
          \C 0 18 15 16 0 18 \L 0 20 \L 0 47 \S \m 49 -195 \C -4 4 0
          26 0 15 \S \m 60 -263 \L 0 41 \S \ov 14 -78 32 32 \S \ov 29
          -150 32 32 \S \ov 44 -222 32 32 \S
        \end{sdpgf} \\
        \mathsf{m} \mathsf{m} \mathsf{m} 1 2 3 4
      \end{array} \right]
      \\
      \mathsf{m} 1 (\mathsf{m} 2 \mathsf{m} 3 4)
      \xfmapsto{s}
      \left[ \begin{array}{c}
        \begin{sdpgf}{0}{0}{120}{-268}{0.2pt}
          \m 49 -195 \C -13 13 -6 20 0 18 \L 0 20 \C 0 18 -15 16 0 18
          \L 0 20 \L 0 47 \S \m 64 -123 \C -4 4 0 26 0 15 \S \m 49 -51
          \C -4 4 0 27 0 15 \S \m 71 -51 \C 4 4 0 27 0 15 \S \m 86
          -123 \C 13 13 6 20 0 18 \L 0 20 \L 0 47 \S \m 71 -195 \C 4 4
          0 26 0 15 \S \m 60 -263 \L 0 41 \S \ov 44 -78 32 32 \S \ov
          59 -150 32 32 \S \ov 44 -222 32 32 \S
        \end{sdpgf} \\
        (\mathsf{m} 1 \mathsf{m} (\mathsf{m} 2 3) 4)
      \end{array} \right]
      \xfmapsto{s}
      \left[ \begin{array}{c}
        \begin{sdpgf}{0}{0}{120}{-268}{0.2pt}
          \m 34 -123 \C -13 13 -6 20 0 18 \L 0 20 \L 0 47 \S \m 56
          -123 \C 4 4 0 26 0 15 \S \m 49 -51 \C -4 4 0 27 0 15 \S \m
          71 -51 \C 4 4 0 27 0 15 \S \m 71 -195 \C 13 13 6 20 0 18 \L
          0 20 \C 0 18 15 16 0 18 \L 0 20 \L 0 47 \S \m 49 -195 \C -4
          4 0 26 0 15 \S \m 60 -263 \L 0 41 \S \ov 44 -78 32 32 \S \ov
          29 -150 32 32 \S \ov 44 -222 32 32 \S
        \end{sdpgf} \\
        \mathsf{m} (\mathsf{m} 1 \mathsf{m} 2 3) 4
      \end{array} \right]
      \xfmapsto{s}
      \mathsf{m} \mathsf{m} \mathsf{m} 1 2 3 4
    \end{array}
  \]
  Hence the conditions of Theorem~\ref{S:ODL} are fulfilled,
  \(\mc{R}\{\Omega\}(n) = \mc{I}(S)(n) \oplus \mathrm{Irr}(S)(n)\) for all
  \(n \in \mathbb{N}\), and \(\mathcal{A}\mkern-1mu\mathit{ss}(n) \cong \mathrm{Irr}(S)(n)\) as
  $\mc{R}$-modules for all \(n \in \mathbb{N}\). Since a monomial $\mu$ is
  irreducible iff it does not contain an $\mathsf{m}$ as right child of an
  $\mathsf{m}$, i.e., iff every right child of an $\mathsf{m}$ is an input, it
  follows that the only thing that distinguishes two irreducible
  elements of $\mc{Y}(n)$ is the order of the inputs. On the other
  hand, every permutation of the inputs gives rise to a distinct
  irreducible element, so \(\dim \mathcal{A}\mkern-1mu\mathit{ss}(n) = \lvert \Sigma_n
  \rvert = n!\) for all \(n \geqslant 1\), exactly as one would
  expect.

  For \(n=0\) one gets \(\dim \mathcal{A}\mkern-1mu\mathit{ss}(0) = \dim \mc{R}\{\Omega\}(0)
  = \bigl\lvert \mc{Y}(0) \bigr\rvert = 0\) however, which is perhaps
  not quite what the textbooks say $\mathcal{A}\mkern-1mu\mathit{ss}$ should have. The
  reason it comes out this way is that we took $\mathcal{A}\mkern-1mu\mathit{ss}$ to be the
  operad for associative algebras, period; had we instead taken it to
  be the operad for \emph{unital} associative algebras then
  \(\dim \mathcal{A}\mkern-1mu\mathit{ss}(n) = n!\) would have held also for \(n=0\).
  Obviously \(\dim \mc{R}\{\Omega\}(0) = 0\) because $\Omega$ doesn't
  contain any constants, but requiring a unit introduces such a
  constant $\mathsf{u}$. Making that constant behave like a unit requires two
  additional rules $(\mathsf{m}\mathsf{u}1,1)$ and $(\mathsf{m}1\mathsf{u},1)$ in the
  rewriting system however, and we felt the resolution of the
  resulting ambiguities are perhaps better left as exercises.
\end{example}

Another useful exercise is to similarly construct the $\mathcal{L}\mkern-1mu\mathit{eib}$
operad, which merely amounts to replacing the rewriting system $S$
with \(S' = \{s'\}\),  where \(s = (\mathsf{m} 1\mathsf{m} 2 3,
\discretionary{}{}{} \mathsf{m}\mathsf{m} 1 2 3
-\nobreak \mathsf{m}\mathsf{m} 1 3 2)\). Using brackets as notation for the
operation in a Leibniz algebra, this rule corresponds to the law that
\(\bigl[ x, [y,z] \bigr] = \bigl[ [x,y], z \bigr] - \bigl[ [x,z], y
\bigr]\). Graphically, $s'$ takes the form
\begin{equation}
  \left[ \begin{sdpgf}{0}{0}{90}{-196}{0.2pt}
    \m 34 -123 \C -13 13 -6 20 0 18 \L 0 20 \L 0 47 \S \m 56 -123 \C
    4 4 0 26 0 15 \S \m 49 -51 \C -4 4 0 27 0 15 \S \m 71 -51 \C 4 4
    0 27 0 15 \S \m 45 -191 \L 0 41 \S \ov 44 -78 32 32 \S \ov 29
    -150 32 32 \S
  \end{sdpgf} \right]
  \rightarrow
  \left[ \begin{sdpgf}{0}{0}{90}{-196}{0.2pt}
    \m 34 -123 \C -4 4 0 26 0 15 \S \m 19 -51 \C -4 4 0 27 0 15 \S
    \m 41 -51 \C 4 4 0 27 0 15 \S \m 56 -123 \C 13 13 6 20 0 18 \L 0
    20 \L 0 47 \S \m 45 -191 \L 0 41 \S \ov 14 -78 32 32 \S \ov 29
    -150 32 32 \S
  \end{sdpgf} \right]
  -
  \left[ \begin{sdpgf}{0}{0}{90}{-196}{0.2pt}
    \m 19 -51 \C -4 4 0 27 0 15 \S \m 56 -123 \C 13 13 6 20 0 18 \L 0
    20 \C 0 19 -30 9 0 19 \S \m 41 -51 \C 14 14 20 20 0 12 \S \m 34
    -123 \C -4 4 0 26 0 15 \S \m 45 -191 \L 0 41 \S \ov 14 -78 32 32
    \S \ov 29 -150 32 32 \S
  \end{sdpgf} \right]
\end{equation}
which unlike associativity is not planar, but that makes no
difference for the Diamond Lemma machinery. The left hand side of $s'$
is the same as the left hand side of $s$, so \(\mathrm{Irr}(S') = \mathrm{Irr}(S)\)
and both rewriting systems have the same sites of ambiguities. What
is different are the resolutions, where the resolution in the Leibniz
case is longer since it involves more terms; a compact notation such
as the Polish one is highly recommended when reporting the
calculations. Still, it is well within the realm of what can be done
by hand.

\subsection{The hom-associative operad}     \index{hom-associative operad}
When pursuing the same approach for the hom-associative identity, one
of course needs an extra symbol for the unary operation, so \(\Omega
= \bigl\{ \mathsf{m}({,}), \mathsf{a}() \bigr\}\). Drawing $\mathsf{m}$ as a circle and
$\mathsf{a}$ as a square, hom-associativity is then the congruence
\begin{equation} \label{Eq:rule1}
  \left[ \begin{sdpgf}{0}{0}{120}{-196}{0.2pt}
    \m 60 -191 \L 0 41 \S \m 30 -46 \L 0 41 \S \m 79 -51 \C -12 12 -7
    18 0 16 \S \m 101 -51 \C 14 14 -25 17 0 15 \S \m 49 -123 \C -12 12
    -7 17 0 16 \S \m 71 -123 \C 12 12 7 17 0 16 \S \ov 44 -150 32 32
    \S \re 14 -78 32 32 \S \ov 74 -78 32 32 \S
  \end{sdpgf} \right]
  \equiv
  \left[ \begin{sdpgf}{0}{0}{120}{-196}{0.2pt}
    \m 60 -191 \L 0 41 \S \m 19 -51 \C -14 14 25 17 0 15 \S \m 41 -51
    \C 12 12 7 18 0 16 \S \m 90 -46 \L 0 41 \S \m 49 -123 \C -12 12 -7
    17 0 16 \S \m 71 -123 \C 12 12 7 17 0 16 \S \ov 44 -150 32 32 \S
    \ov 14 -78 32 32 \S \re 74 -78 32 32 \S
  \end{sdpgf} \right]
\end{equation}
which can be expressed as a rule \(s = ( \mathsf{m} \mathsf{a} 1 \mathsf{m} 2 3, \mathsf{m}
\mathsf{m} 1 2 \mathsf{a} 3 )\). It is thus straightforward to define
\(\mathcal{H\mkern-4muA}\mkern-1mu\mathit{ss} = \mc{R}\{\Omega\} \big/ \mc{I}\bigl(\{s\}\bigr)\), but
not quite so straightforward to decide whether two elements of
$\mc{R}\{\Omega\}$ are congruent modulo $\mc{I}\bigl(\{s\}\bigr)$,
because $\{s\}$ is not a complete rewriting system; the ambiguity one
has to check fails to resolve:
\begin{equation} \label{Eq:rule2}
  \left[ \begin{sdpgf}{0}{0}{150}{-268}{0.2pt}
    \m 75 -263 \L 0 41 \S \m 34 -123 \C -13 13 -6 20 0 18 \L 0 20 \C 0
    16 15 15 0 16 \S \m 60 -46 \L 0 41 \S \m 109 -51 \C -12 12 -7 18 0
    16 \S \m 131 -51 \C 14 14 -25 17 0 15 \S \m 64 -195 \C -12 12 -7
    17 0 16 \S \m 86 -195 \C 12 12 7 17 0 16 \S \m 56 -123 \C 4 4 0 26
    0 15 \S \m 105 -118 \C 0 14 15 12 0 14 \S \ov 59 -222 32 32 \S \ov
    29 -150 32 32 \S \re 44 -78 32 32 \S \ov 104 -78 32 32 \S \re 89
    -150 32 32 \S
  \end{sdpgf} \right]
  \stackrel{s}{\leftarrow}
  \left[ \begin{sdpgf}{0}{0}{150}{-268}{0.2pt}
    \m 75 -263 \L 0 41 \S \m 45 -118 \C 0 19 -30 8 0 19 \L 0 20 \C 0
    16 15 15 0 16 \S \m 60 -46 \L 0 41 \S \m 109 -51 \C -12 12 -7 18 0
    16 \S \m 131 -51 \C 14 14 -25 17 0 15 \S \m 64 -195 \C -12 12 -7
    17 0 16 \S \m 86 -195 \C 12 12 7 17 0 16 \S \m 94 -123 \C -14 14
    -20 20 0 11 \S \m 116 -123 \C 4 4 0 26 0 15 \S \ov 59 -222 32 32
    \S \re 29 -150 32 32 \S \re 44 -78 32 32 \S \ov 104 -78 32 32 \S
    \ov 89 -150 32 32 \S
  \end{sdpgf} \right]
  \stackrel{s}{\rightarrow}
  \left[ \begin{sdpgf}{0}{0}{150}{-268}{0.2pt}
    \m 75 -263 \L 0 41 \S \m 45 -118 \C 0 19 -30 8 0 19 \L 0 20 \C 0
    16 15 15 0 16 \S \m 49 -51 \C -14 14 25 17 0 15 \S \m 71 -51 \C 12
    12 7 18 0 16 \S \m 120 -46 \L 0 41 \S \m 64 -195 \C -12 12 -7 17 0
    16 \S \m 86 -195 \C 12 12 7 17 0 16 \S \m 94 -123 \C -14 14 -20 20
    0 11 \S \m 116 -123 \C 4 4 0 26 0 15 \S \ov 59 -222 32 32 \S \re
    29 -150 32 32 \S \ov 44 -78 32 32 \S \re 104 -78 32 32 \S \ov 89
    -150 32 32 \S
  \end{sdpgf} \right]
  \stackrel{s}{\rightarrow}
  \left[ \begin{sdpgf}{0}{0}{150}{-268}{0.2pt}
    \m 75 -263 \L 0 41 \S \m 34 -123 \C -13 13 -6 20 0 18 \L 0 20 \C 0
    16 15 15 0 16 \S \m 49 -51 \C -14 14 25 17 0 15 \S \m 71 -51 \C 12
    12 7 18 0 16 \S \m 120 -46 \L 0 41 \S \m 64 -195 \C -12 12 -7 17 0
    16 \S \m 86 -195 \C 12 12 7 17 0 16 \S \m 56 -123 \C 4 4 0 26 0 15
    \S \m 105 -118 \C 0 14 15 12 0 14 \S \ov 59 -222 32 32 \S \ov 29
    -150 32 32 \S \ov 44 -78 32 32 \S \re 104 -78 32 32 \S \re 89 -150
    32 32 \S
  \end{sdpgf} \right]
  \text{.}
\end{equation}
Failed resolutions should not be taken as disasters however; they are
in fact opportunities to learn, since what the above demonstrates is
that \(\mathsf{m} \mathsf{m} 1 \mathsf{a} 2 \mathsf{a} \mathsf{m} 3 4 \equiv \mathsf{m} \mathsf{m} 1 \mathsf{m} 2 3
\mathsf{a} \mathsf{a} 4 \pmod{\{s\}}\) (or as a law: \(\bigl( x \alpha(y) \bigr)
\alpha(zw) = \bigl( x (y z) \bigr) \alpha\bigl( \alpha(w) \bigr)\)
for all $x,y,z,w$), which was probably not apparent from the definition
of hom-associativity. Therefore one's response to this discovery
should be to make a new rule out of this new and nontrivial
congruence, so that one can use it to better understand congruence
modulo hom-associativity.

A problem with this congruence is however that the left and right
hand sides are not comparable under the same partial order as worked
fine for the associative and Leibniz operads: \((h_0,h_1,h_2,h_3) =
(0,1,1,2)\) for the left hand side but \((h_0,h_1,h_2,h_3) =
(0,1,2,1)\) for the right hand side; finding a compatible order can
be a rather challenging problem for complex congruences. In the case
of hom-associativity though, the fact that all inputs are at the same
height in the left and right hand sides makes it possible to use
something very classical: a lexicographic order. Recursively it may
be defined as having \(\mu > \nu\) if:
\begin{itemize}
  \item
    \(\mu = \mathsf{m} \mu' \mu''\) and \(\nu = \mathsf{m} \nu' \nu''\), where
    \(\mu' > \nu'\), or
  \item
    \(\mu = \mathsf{m} \mu' \mu''\) and \(\nu = \mathsf{m} \nu' \nu''\), where
    \(\mu'=\nu'\) and \(\mu''>\nu''\), or
  \item
    \(\mu = \mathsf{a} \mu'\) and \(\nu = \mathsf{m} \nu' \nu''\), or
  \item
    \(\mu = \mathsf{a} \mu'\) and \(\nu = \mathsf{a} \nu'\), where \(\mu' >
    \nu'\).
\end{itemize}
Equivalently, one may define it as the word-lexicographic order on
the Polish notation, over the order on letters which has \(\mathsf{m} <
\mathsf{a}\) and each $\Box_i$ unrelated to all other letters. With this
order, it is clear that the congruence \eqref{Eq:rule2} should be
turned into the rule \((\mathsf{m} \mathsf{m} 1 \mathsf{a} 2 \mathsf{a} \mathsf{m} 3 4,
\mathsf{m} \mathsf{m} 1 \mathsf{m} 2 3 \mathsf{a} \mathsf{a} 4)\).

In general, the idea to ``find all ambiguities, try to resolve them,
make new rules out of everything that doesn't resolve, and repeat
until everything resolves'' is called the \emph{Critical Pairs\slash
Completion}~(CPC) procedure; its most famous instance is the Buchberger
algorithm for computing Gr\"obner bases. `Critical pairs' corresponds
to identifying ambiguities, whereas `completion' is the step of adding
new rules; a rewriting system is said to be \emph{complete} when all
ambiguities are resolvable.

In the case at hand, the calculations quickly become extensive, so we
make use of a program~\cite{cmplutil} one of us has written that
automates the CPC procedure in the operadic setting (actually, in the
more general PROP setting). Running it with \eqref{Eq:rule1} as input
quickly leads to the discovery of \eqref{Eq:rule2} and several more
identities:
\begin{gather} \label{Eq:rule3}
  \left[ \begin{sdpgf}{0}{0}{180}{-340}{0.2pt}
    \m 90 -335 \L 0 41 \S \m 49 -195 \C -13 13 -6 20 0 18 \L 0 20 \C 0
    18 -15 16 0 18 \L 0 20 \C 0 16 15 15 0 16 \S \m 64 -123 \C -13 13
    -6 20 0 18 \L 0 20 \C 0 16 15 15 0 16 \S \m 90 -46 \L 0 41 \S \m
    139 -51 \C -12 12 -7 18 0 16 \S \m 161 -51 \C 14 14 -25 17 0 15 \S
    \m 79 -267 \C -12 12 -7 17 0 16 \S \m 101 -267 \C 12 12 7 17 0 16
    \S \m 71 -195 \C 4 4 0 26 0 15 \S \m 86 -123 \C 4 4 0 26 0 15 \S
    \m 135 -118 \C 0 14 15 12 0 14 \S \m 120 -190 \C 0 14 15 12 0 14
    \S \ov 74 -294 32 32 \S \ov 44 -222 32 32 \S \ov 59 -150 32 32 \S
    \re 74 -78 32 32 \S \ov 134 -78 32 32 \S \re 104 -222 32 32 \S \re
    119 -150 32 32 \S
  \end{sdpgf} \right]
  \stackrel{\eqref{Eq:rule1}}{\equiv}
  \left[ \begin{sdpgf}{0}{0}{180}{-340}{0.2pt}
    \m 90 -335 \L 0 41 \S \m 60 -190 \C 0 19 -30 8 0 19 \L 0 20 \C 0
    18 -15 16 0 18 \L 0 20 \C 0 16 15 15 0 16 \S \m 64 -123 \C -13 13
    -6 20 0 18 \L 0 20 \C 0 16 15 15 0 16 \S \m 90 -46 \L 0 41 \S \m
    139 -51 \C -12 12 -7 18 0 16 \S \m 161 -51 \C 14 14 -25 17 0 15 \S
    \m 79 -267 \C -12 12 -7 17 0 16 \S \m 101 -267 \C 12 12 7 17 0 16
    \S \m 109 -195 \C -14 14 -20 20 0 11 \S \m 86 -123 \C 4 4 0 26 0
    15 \S \m 135 -118 \C 0 14 15 12 0 14 \S \m 131 -195 \C 4 4 0 26 0
    15 \S \ov 74 -294 32 32 \S \re 44 -222 32 32 \S \ov 59 -150 32 32
    \S \re 74 -78 32 32 \S \ov 134 -78 32 32 \S \ov 104 -222 32 32 \S
    \re 119 -150 32 32 \S
  \end{sdpgf} \right]
  \stackrel{\eqref{Eq:rule2}}{\equiv}
  \left[ \begin{sdpgf}{0}{0}{180}{-340}{0.2pt}
    \m 90 -335 \L 0 41 \S \m 60 -190 \C 0 19 -30 8 0 19 \L 0 20 \C 0
    18 -15 16 0 18 \L 0 20 \C 0 16 15 15 0 16 \S \m 64 -123 \C -13 13
    -6 20 0 18 \L 0 20 \C 0 16 15 15 0 16 \S \m 79 -51 \C -14 14 25 17
    0 15 \S \m 101 -51 \C 12 12 7 18 0 16 \S \m 150 -46 \L 0 41 \S \m
    79 -267 \C -12 12 -7 17 0 16 \S \m 101 -267 \C 12 12 7 17 0 16 \S
    \m 109 -195 \C -14 14 -20 20 0 11 \S \m 86 -123 \C 4 4 0 26 0 15
    \S \m 135 -118 \C 0 14 15 12 0 14 \S \m 131 -195 \C 4 4 0 26 0 15
    \S \ov 74 -294 32 32 \S \re 44 -222 32 32 \S \ov 59 -150 32 32 \S
    \ov 74 -78 32 32 \S \re 134 -78 32 32 \S \ov 104 -222 32 32 \S \re
    119 -150 32 32 \S
  \end{sdpgf} \right]
  \stackrel{\eqref{Eq:rule1}}{\equiv}
  \left[ \begin{sdpgf}{0}{0}{180}{-340}{0.2pt}
    \m 90 -335 \L 0 41 \S \m 49 -195 \C -13 13 -6 20 0 18 \L 0 20 \C 0
    18 -15 16 0 18 \L 0 20 \C 0 16 15 15 0 16 \S \m 64 -123 \C -13 13
    -6 20 0 18 \L 0 20 \C 0 16 15 15 0 16 \S \m 79 -51 \C -14 14 25 17
    0 15 \S \m 101 -51 \C 12 12 7 18 0 16 \S \m 150 -46 \L 0 41 \S \m
    79 -267 \C -12 12 -7 17 0 16 \S \m 101 -267 \C 12 12 7 17 0 16 \S
    \m 71 -195 \C 4 4 0 26 0 15 \S \m 86 -123 \C 4 4 0 26 0 15 \S \m
    135 -118 \C 0 14 15 12 0 14 \S \m 120 -190 \C 0 14 15 12 0 14 \S
    \ov 74 -294 32 32 \S \ov 44 -222 32 32 \S \ov 59 -150 32 32 \S \ov
    74 -78 32 32 \S \re 134 -78 32 32 \S \re 104 -222 32 32 \S \re 119
    -150 32 32 \S
  \end{sdpgf} \right]
  \displaybreak[0]\\
  \label{Eq:rule4}
  \left[ \begin{sdpgf}{0}{0}{210}{-340}{0.2pt}
    \m 105 -335 \L 0 41 \S \m 34 -123 \C -13 13 -6 20 0 18 \L 0 20 \C
    0 16 15 15 0 16 \S \m 60 -46 \L 0 41 \S \m 109 -51 \C -12 12 -7 18
    0 16 \S \m 131 -51 \C 14 14 -25 17 0 15 \S \m 116 -123 \C 17 17 32
    32 0 2 \L 0 20 \C 0 16 -15 15 0 16 \S \m 165 -118 \C 0 19 30 8 0
    19 \L 0 20 \C 0 16 -15 15 0 16 \S \m 94 -267 \C -12 12 -7 17 0 16
    \S \m 116 -267 \C 12 12 7 17 0 16 \S \m 64 -195 \C -12 12 -7 17 0
    16 \S \m 56 -123 \C 4 4 0 26 0 15 \S \m 94 -123 \C -20 20 46 10 0
    15 \S \m 86 -195 \C 12 12 7 17 0 16 \S \m 135 -190 \C 0 17 30 6 0
    17 \S \ov 89 -294 32 32 \S \ov 29 -150 32 32 \S \re 44 -78 32 32
    \S \ov 104 -78 32 32 \S \ov 89 -150 32 32 \S \re 149 -150 32 32 \S
    \ov 59 -222 32 32 \S \re 119 -222 32 32 \S
  \end{sdpgf} \right]
  \stackrel{\eqref{Eq:rule2}}{\equiv}
  \left[ \begin{sdpgf}{0}{0}{210}{-340}{0.2pt}
    \m 105 -335 \L 0 41 \S \m 49 -123 \C -15 15 -19 20 0 16 \L 0 20 \C
    0 16 15 15 0 16 \S \m 60 -46 \L 0 41 \S \m 109 -51 \C -12 12 -7 18
    0 16 \S \m 131 -51 \C 14 14 -25 17 0 15 \S \m 169 -51 \C -12 12 -7
    18 0 16 \S \m 191 -51 \C 14 14 -25 17 0 15 \S \m 94 -267 \C -12 12
    -7 17 0 16 \S \m 116 -267 \C 12 12 7 17 0 16 \S \m 64 -195 \C -4 4
    0 26 0 15 \S \m 71 -123 \C 14 14 -25 16 0 15 \S \m 120 -118 \L 0
    40 \S \m 135 -190 \C 0 19 30 8 0 19 \L 0 20 \C 0 16 15 14 0 16 \S
    \m 86 -195 \C 14 14 20 20 0 11 \S \ov 89 -294 32 32 \S \ov 44 -150
    32 32 \S \re 44 -78 32 32 \S \ov 104 -78 32 32 \S \ov 164 -78 32
    32 \S \ov 59 -222 32 32 \S \re 119 -222 32 32 \S \re 104 -150 32
    32 \S
  \end{sdpgf} \right]
  \stackrel{\eqref{Eq:rule2}}{\equiv}
  \left[ \begin{sdpgf}{0}{0}{210}{-340}{0.2pt}
    \m 105 -335 \L 0 41 \S \m 49 -123 \C -15 15 -19 20 0 16 \L 0 20 \C
    0 16 15 15 0 16 \S \m 49 -51 \C -14 14 25 17 0 15 \S \m 71 -51 \C
    12 12 7 18 0 16 \S \m 120 -46 \L 0 41 \S \m 169 -51 \C -12 12 -7
    18 0 16 \S \m 191 -51 \C 14 14 -25 17 0 15 \S \m 94 -267 \C -12 12
    -7 17 0 16 \S \m 116 -267 \C 12 12 7 17 0 16 \S \m 64 -195 \C -4 4
    0 26 0 15 \S \m 71 -123 \C 14 14 -25 16 0 15 \S \m 120 -118 \L 0
    40 \S \m 135 -190 \C 0 19 30 8 0 19 \L 0 20 \C 0 16 15 14 0 16 \S
    \m 86 -195 \C 14 14 20 20 0 11 \S \ov 89 -294 32 32 \S \ov 44 -150
    32 32 \S \ov 44 -78 32 32 \S \re 104 -78 32 32 \S \ov 164 -78 32
    32 \S \ov 59 -222 32 32 \S \re 119 -222 32 32 \S \re 104 -150 32
    32 \S
  \end{sdpgf} \right]
  \stackrel{\eqref{Eq:rule2}}{\equiv}
  \left[ \begin{sdpgf}{0}{0}{210}{-340}{0.2pt}
    \m 105 -335 \L 0 41 \S \m 34 -123 \C -13 13 -6 20 0 18 \L 0 20 \C
    0 16 15 15 0 16 \S \m 49 -51 \C -14 14 25 17 0 15 \S \m 71 -51 \C
    12 12 7 18 0 16 \S \m 120 -46 \L 0 41 \S \m 116 -123 \C 17 17 32
    32 0 2 \L 0 20 \C 0 16 -15 15 0 16 \S \m 165 -118 \C 0 19 30 8 0
    19 \L 0 20 \C 0 16 -15 15 0 16 \S \m 94 -267 \C -12 12 -7 17 0 16
    \S \m 116 -267 \C 12 12 7 17 0 16 \S \m 64 -195 \C -12 12 -7 17 0
    16 \S \m 56 -123 \C 4 4 0 26 0 15 \S \m 94 -123 \C -20 20 46 10 0
    15 \S \m 86 -195 \C 12 12 7 17 0 16 \S \m 135 -190 \C 0 17 30 6 0
    17 \S \ov 89 -294 32 32 \S \ov 29 -150 32 32 \S \ov 44 -78 32 32
    \S \re 104 -78 32 32 \S \ov 89 -150 32 32 \S \re 149 -150 32 32 \S
    \ov 59 -222 32 32 \S \re 119 -222 32 32 \S
  \end{sdpgf} \right]
  \displaybreak[0]\\
  \label{Eq:rule5}
  \left[ \begin{sdpgf}{0}{0}{180}{-340}{0.2pt}
    \m 90 -335 \L 0 41 \S \m 19 -51 \C -14 14 25 17 0 15 \S \m 41 -51
    \C 12 12 7 18 0 16 \S \m 90 -46 \L 0 41 \S \m 139 -51 \C -12 12 -7
    18 0 16 \S \m 161 -51 \C 14 14 -25 17 0 15 \S \m 79 -267 \C -12 12
    -7 17 0 16 \S \m 101 -267 \C 12 12 7 17 0 16 \S \m 49 -195 \C -13
    13 -6 20 0 18 \L 0 20 \L 0 46 \S \m 75 -118 \C 0 14 15 12 0 14 \S
    \m 135 -118 \C 0 14 15 12 0 14 \S \m 71 -195 \C 4 4 0 26 0 15 \S
    \m 120 -190 \C 0 14 15 12 0 14 \S \ov 74 -294 32 32 \S \ov 14 -78
    32 32 \S \re 74 -78 32 32 \S \ov 134 -78 32 32 \S \ov 44 -222 32
    32 \S \re 104 -222 32 32 \S \re 59 -150 32 32 \S \re 119 -150 32
    32 \S
  \end{sdpgf} \right]
  \stackrel{\eqref{Eq:rule1}}{\equiv}
  \left[ \begin{sdpgf}{0}{0}{180}{-340}{0.2pt}
    \m 90 -335 \L 0 41 \S \m 30 -118 \C 0 16 -15 14 0 16 \L 0 20 \C 0
    16 15 15 0 16 \S \m 79 -123 \C -15 15 -19 20 0 16 \L 0 20 \C 0 16
    15 15 0 16 \S \m 90 -46 \L 0 41 \S \m 139 -51 \C -12 12 -7 18 0 16
    \S \m 161 -51 \C 14 14 -25 17 0 15 \S \m 79 -267 \C -12 12 -7 17 0
    16 \S \m 101 -267 \C 12 12 7 17 0 16 \S \m 49 -195 \C -12 12 -7 17
    0 16 \S \m 71 -195 \C 12 12 7 17 0 16 \S \m 101 -123 \C 14 14 -25
    16 0 15 \S \m 150 -118 \L 0 40 \S \m 120 -190 \C 0 17 30 6 0 17 \S
    \ov 74 -294 32 32 \S \re 14 -150 32 32 \S \ov 74 -150 32 32 \S \re
    74 -78 32 32 \S \ov 134 -78 32 32 \S \ov 44 -222 32 32 \S \re 104
    -222 32 32 \S \re 134 -150 32 32 \S
  \end{sdpgf} \right]
  \stackrel{\eqref{Eq:rule3}}{\equiv}
  \left[ \begin{sdpgf}{0}{0}{180}{-340}{0.2pt}
    \m 90 -335 \L 0 41 \S \m 30 -118 \C 0 16 -15 14 0 16 \L 0 20 \C 0
    16 15 15 0 16 \S \m 79 -123 \C -15 15 -19 20 0 16 \L 0 20 \C 0 16
    15 15 0 16 \S \m 79 -51 \C -14 14 25 17 0 15 \S \m 101 -51 \C 12
    12 7 18 0 16 \S \m 150 -46 \L 0 41 \S \m 79 -267 \C -12 12 -7 17 0
    16 \S \m 101 -267 \C 12 12 7 17 0 16 \S \m 49 -195 \C -12 12 -7 17
    0 16 \S \m 71 -195 \C 12 12 7 17 0 16 \S \m 101 -123 \C 14 14 -25
    16 0 15 \S \m 150 -118 \L 0 40 \S \m 120 -190 \C 0 17 30 6 0 17 \S
    \ov 74 -294 32 32 \S \re 14 -150 32 32 \S \ov 74 -150 32 32 \S \ov
    74 -78 32 32 \S \re 134 -78 32 32 \S \ov 44 -222 32 32 \S \re 104
    -222 32 32 \S \re 134 -150 32 32 \S
  \end{sdpgf} \right]
  \stackrel{\eqref{Eq:rule1}}{\equiv}
  \left[ \begin{sdpgf}{0}{0}{180}{-340}{0.2pt}
    \m 90 -335 \L 0 41 \S \m 19 -51 \C -14 14 25 17 0 15 \S \m 41 -51
    \C 12 12 7 18 0 16 \S \m 79 -51 \C -14 14 25 17 0 15 \S \m 101 -51
    \C 12 12 7 18 0 16 \S \m 150 -46 \L 0 41 \S \m 79 -267 \C -12 12
    -7 17 0 16 \S \m 101 -267 \C 12 12 7 17 0 16 \S \m 49 -195 \C -13
    13 -6 20 0 18 \L 0 20 \L 0 46 \S \m 75 -118 \C 0 14 15 12 0 14 \S
    \m 135 -118 \C 0 14 15 12 0 14 \S \m 71 -195 \C 4 4 0 26 0 15 \S
    \m 120 -190 \C 0 14 15 12 0 14 \S \ov 74 -294 32 32 \S \ov 14 -78
    32 32 \S \ov 74 -78 32 32 \S \re 134 -78 32 32 \S \ov 44 -222 32
    32 \S \re 104 -222 32 32 \S \re 59 -150 32 32 \S \re 119 -150 32
    32 \S
  \end{sdpgf} \right]
  \displaybreak[0]\\
  \label{Eq:rule6}
  \left[ \begin{sdpgf}{0}{0}{210}{-412}{0.2pt}
    \m 105 -407 \L 0 41 \S \m 64 -267 \C -13 13 -6 20 0 18 \L 0 20 \C
    0 18 -15 16 0 18 \L 0 20 \C 0 18 -15 16 0 18 \L 0 20 \C 0 16 15 15
    0 16 \S \m 79 -195 \C -13 13 -6 20 0 18 \L 0 20 \C 0 18 -15 16 0
    18 \L 0 20 \C 0 16 15 15 0 16 \S \m 94 -123 \C -13 13 -6 20 0 18
    \L 0 20 \C 0 16 15 15 0 16 \S \m 120 -46 \L 0 41 \S \m 169 -51 \C
    -12 12 -7 18 0 16 \S \m 191 -51 \C 14 14 -25 17 0 15 \S \m 94 -339
    \C -12 12 -7 17 0 16 \S \m 116 -339 \C 12 12 7 17 0 16 \S \m 86
    -267 \C 4 4 0 26 0 15 \S \m 101 -195 \C 4 4 0 26 0 15 \S \m 116
    -123 \C 4 4 0 26 0 15 \S \m 165 -118 \C 0 14 15 12 0 14 \S \m 135
    -262 \C 0 14 15 12 0 14 \S \m 150 -190 \C 0 14 15 12 0 14 \S \ov
    89 -366 32 32 \S \ov 59 -294 32 32 \S \ov 74 -222 32 32 \S \ov 89
    -150 32 32 \S \re 104 -78 32 32 \S \ov 164 -78 32 32 \S \re 119
    -294 32 32 \S \re 149 -150 32 32 \S \re 134 -222 32 32 \S
  \end{sdpgf} \right]
  \stackrel{\eqref{Eq:rule2}}{\equiv}
  \left[ \begin{sdpgf}{0}{0}{240}{-412}{0.2pt}
    \m 120 -407 \L 0 41 \S \m 49 -123 \C -15 15 -19 20 0 16 \L 0 20 \C
    0 19 30 9 0 19 \S \m 60 -46 \C 0 14 15 13 0 14 \S \m 109 -123 \C
    -4 4 0 30 0 17 \L 0 20 \L 0 47 \S \m 150 -46 \C 0 14 -15 13 0 14
    \S \m 199 -51 \C -14 14 -20 20 0 12 \S \m 221 -51 \C 20 20 -46 10
    0 16 \S \m 109 -339 \C -13 13 -6 20 0 18 \L 0 20 \L 0 52 \L 0 20
    \C 0 19 -30 8 0 19 \S \m 131 -339 \C 4 4 0 26 0 15 \S \m 71 -123
    \C 14 14 -25 16 0 15 \S \m 124 -195 \C -4 4 0 26 0 15 \S \m 131
    -123 \C 12 12 7 17 0 16 \S \m 180 -118 \C 0 17 30 6 0 17 \S \m 135
    -262 \L 0 40 \S \m 146 -195 \C 14 14 20 20 0 11 \S \ov 104 -366 32
    32 \S \ov 44 -150 32 32 \S \re 44 -78 32 32 \S \ov 104 -150 32 32
    \S \re 134 -78 32 32 \S \ov 194 -78 32 32 \S \re 119 -294 32 32 \S
    \ov 119 -222 32 32 \S \re 164 -150 32 32 \S
  \end{sdpgf} \right]
  \stackrel{\eqref{Eq:rule2}}{\equiv}
  \left[ \begin{sdpgf}{0}{0}{240}{-412}{0.2pt}
    \m 120 -407 \L 0 41 \S \m 49 -123 \C -15 15 -19 20 0 16 \L 0 20 \C
    0 19 30 9 0 19 \S \m 60 -46 \C 0 14 15 13 0 14 \S \m 109 -123 \C
    -4 4 0 30 0 17 \L 0 20 \L 0 47 \S \m 139 -51 \C -4 4 0 27 0 15 \S
    \m 161 -51 \C 4 4 0 27 0 15 \S \m 210 -46 \C 0 14 -15 13 0 14 \S
    \m 109 -339 \C -13 13 -6 20 0 18 \L 0 20 \L 0 52 \L 0 20 \C 0 19
    -30 8 0 19 \S \m 131 -339 \C 4 4 0 26 0 15 \S \m 71 -123 \C 14 14
    -25 16 0 15 \S \m 124 -195 \C -4 4 0 26 0 15 \S \m 131 -123 \C 12
    12 7 17 0 16 \S \m 180 -118 \C 0 17 30 6 0 17 \S \m 135 -262 \L 0
    40 \S \m 146 -195 \C 14 14 20 20 0 11 \S \ov 104 -366 32 32 \S \ov
    44 -150 32 32 \S \re 44 -78 32 32 \S \ov 104 -150 32 32 \S \ov 134
    -78 32 32 \S \re 194 -78 32 32 \S \re 119 -294 32 32 \S \ov 119
    -222 32 32 \S \re 164 -150 32 32 \S
  \end{sdpgf} \right]
  \stackrel{\eqref{Eq:rule2}}{\equiv}
  \left[ \begin{sdpgf}{0}{0}{210}{-412}{0.2pt}
    \m 105 -407 \L 0 41 \S \m 64 -267 \C -13 13 -6 20 0 18 \L 0 20 \C
    0 18 -15 16 0 18 \L 0 20 \C 0 18 -15 16 0 18 \L 0 20 \C 0 16 15 15
    0 16 \S \m 79 -195 \C -13 13 -6 20 0 18 \L 0 20 \C 0 18 -15 16 0
    18 \L 0 20 \C 0 16 15 15 0 16 \S \m 94 -123 \C -13 13 -6 20 0 18
    \L 0 20 \C 0 16 15 15 0 16 \S \m 109 -51 \C -14 14 25 17 0 15 \S
    \m 131 -51 \C 12 12 7 18 0 16 \S \m 180 -46 \L 0 41 \S \m 94 -339
    \C -12 12 -7 17 0 16 \S \m 116 -339 \C 12 12 7 17 0 16 \S \m 86
    -267 \C 4 4 0 26 0 15 \S \m 101 -195 \C 4 4 0 26 0 15 \S \m 116
    -123 \C 4 4 0 26 0 15 \S \m 165 -118 \C 0 14 15 12 0 14 \S \m 135
    -262 \C 0 14 15 12 0 14 \S \m 150 -190 \C 0 14 15 12 0 14 \S \ov
    89 -366 32 32 \S \ov 59 -294 32 32 \S \ov 74 -222 32 32 \S \ov 89
    -150 32 32 \S \ov 104 -78 32 32 \S \re 164 -78 32 32 \S \re 119
    -294 32 32 \S \re 149 -150 32 32 \S \re 134 -222 32 32 \S
  \end{sdpgf} \right]
  \displaybreak[0]\\
  \label{Eq:rule7}
  \left[ \begin{sdpgf}{0}{0}{240}{-412}{0.2pt}
    \m 120 -407 \L 0 41 \S \m 49 -195 \C -13 13 -6 20 0 18 \L 0 20 \C
    0 18 -15 16 0 18 \L 0 20 \C 0 16 15 15 0 16 \S \m 64 -123 \C -13
    13 -6 20 0 18 \L 0 20 \C 0 16 15 15 0 16 \S \m 90 -46 \L 0 41 \S
    \m 139 -51 \C -12 12 -7 18 0 16 \S \m 161 -51 \C 14 14 -25 17 0 15
    \S \m 131 -195 \C 17 17 32 32 0 2 \L 0 20 \C 0 18 15 16 0 18 \L 0
    20 \C 0 16 -15 15 0 16 \S \m 180 -190 \C 0 19 30 8 0 19 \L 0 20 \C
    0 18 15 16 0 18 \L 0 20 \C 0 16 -15 15 0 16 \S \m 109 -339 \C -12
    12 -7 17 0 16 \S \m 131 -339 \C 12 12 7 17 0 16 \S \m 79 -267 \C
    -12 12 -7 17 0 16 \S \m 71 -195 \C 4 4 0 26 0 15 \S \m 86 -123 \C
    4 4 0 26 0 15 \S \m 135 -118 \C 0 14 15 12 0 14 \S \m 101 -267 \C
    12 12 7 17 0 16 \S \m 109 -195 \C -20 20 46 10 0 15 \S \m 150 -262
    \C 0 17 30 6 0 17 \S \ov 104 -366 32 32 \S \ov 44 -222 32 32 \S
    \ov 59 -150 32 32 \S \re 74 -78 32 32 \S \ov 134 -78 32 32 \S \ov
    104 -222 32 32 \S \re 164 -222 32 32 \S \ov 74 -294 32 32 \S \re
    134 -294 32 32 \S \re 119 -150 32 32 \S
  \end{sdpgf} \right]
  \stackrel{\eqref{Eq:rule2}}{\equiv}
  \left[ \begin{sdpgf}{0}{0}{240}{-412}{0.2pt}
    \m 120 -407 \L 0 41 \S \m 64 -195 \C -13 13 -6 20 0 18 \L 0 20 \C
    0 20 -30 12 0 20 \L 0 20 \C 0 16 15 15 0 16 \S \m 79 -123 \C -15
    15 -19 20 0 16 \L 0 20 \C 0 16 15 15 0 16 \S \m 90 -46 \L 0 41 \S
    \m 139 -51 \C -12 12 -7 18 0 16 \S \m 161 -51 \C 14 14 -25 17 0 15
    \S \m 199 -51 \C -12 12 -7 18 0 16 \S \m 221 -51 \C 14 14 -25 17 0
    15 \S \m 109 -339 \C -12 12 -7 17 0 16 \S \m 131 -339 \C 12 12 7
    17 0 16 \S \m 79 -267 \C -4 4 0 26 0 15 \S \m 86 -195 \C 4 4 0 26
    0 15 \S \m 101 -123 \C 14 14 -25 16 0 15 \S \m 150 -118 \L 0 40 \S
    \m 150 -262 \C 0 19 30 8 0 19 \L 0 20 \C 0 18 15 16 0 18 \L 0 20
    \C 0 16 15 14 0 16 \S \m 101 -267 \C 14 14 20 20 0 11 \S \m 135
    -190 \C 0 14 15 12 0 14 \S \ov 104 -366 32 32 \S \ov 59 -222 32 32
    \S \ov 74 -150 32 32 \S \re 74 -78 32 32 \S \ov 134 -78 32 32 \S
    \ov 194 -78 32 32 \S \ov 74 -294 32 32 \S \re 134 -294 32 32 \S
    \re 134 -150 32 32 \S \re 119 -222 32 32 \S
  \end{sdpgf} \right]
  \stackrel{\eqref{Eq:rule3}}{\equiv}
  \left[ \begin{sdpgf}{0}{0}{240}{-412}{0.2pt}
    \m 120 -407 \L 0 41 \S \m 64 -195 \C -13 13 -6 20 0 18 \L 0 20 \C
    0 20 -30 12 0 20 \L 0 20 \C 0 16 15 15 0 16 \S \m 79 -123 \C -15
    15 -19 20 0 16 \L 0 20 \C 0 16 15 15 0 16 \S \m 79 -51 \C -14 14
    25 17 0 15 \S \m 101 -51 \C 12 12 7 18 0 16 \S \m 150 -46 \L 0 41
    \S \m 199 -51 \C -12 12 -7 18 0 16 \S \m 221 -51 \C 14 14 -25 17 0
    15 \S \m 109 -339 \C -12 12 -7 17 0 16 \S \m 131 -339 \C 12 12 7
    17 0 16 \S \m 79 -267 \C -4 4 0 26 0 15 \S \m 86 -195 \C 4 4 0 26
    0 15 \S \m 101 -123 \C 14 14 -25 16 0 15 \S \m 150 -118 \L 0 40 \S
    \m 150 -262 \C 0 19 30 8 0 19 \L 0 20 \C 0 18 15 16 0 18 \L 0 20
    \C 0 16 15 14 0 16 \S \m 101 -267 \C 14 14 20 20 0 11 \S \m 135
    -190 \C 0 14 15 12 0 14 \S \ov 104 -366 32 32 \S \ov 59 -222 32 32
    \S \ov 74 -150 32 32 \S \ov 74 -78 32 32 \S \re 134 -78 32 32 \S
    \ov 194 -78 32 32 \S \ov 74 -294 32 32 \S \re 134 -294 32 32 \S
    \re 134 -150 32 32 \S \re 119 -222 32 32 \S
  \end{sdpgf} \right]
  \stackrel{\eqref{Eq:rule2}}{\equiv}
  \left[ \begin{sdpgf}{0}{0}{240}{-412}{0.2pt}
    \m 120 -407 \L 0 41 \S \m 49 -195 \C -13 13 -6 20 0 18 \L 0 20 \C
    0 18 -15 16 0 18 \L 0 20 \C 0 16 15 15 0 16 \S \m 64 -123 \C -13
    13 -6 20 0 18 \L 0 20 \C 0 16 15 15 0 16 \S \m 79 -51 \C -14 14 25
    17 0 15 \S \m 101 -51 \C 12 12 7 18 0 16 \S \m 150 -46 \L 0 41 \S
    \m 131 -195 \C 17 17 32 32 0 2 \L 0 20 \C 0 18 15 16 0 18 \L 0 20
    \C 0 16 -15 15 0 16 \S \m 180 -190 \C 0 19 30 8 0 19 \L 0 20 \C 0
    18 15 16 0 18 \L 0 20 \C 0 16 -15 15 0 16 \S \m 109 -339 \C -12 12
    -7 17 0 16 \S \m 131 -339 \C 12 12 7 17 0 16 \S \m 79 -267 \C -12
    12 -7 17 0 16 \S \m 71 -195 \C 4 4 0 26 0 15 \S \m 86 -123 \C 4 4
    0 26 0 15 \S \m 135 -118 \C 0 14 15 12 0 14 \S \m 101 -267 \C 12
    12 7 17 0 16 \S \m 109 -195 \C -20 20 46 10 0 15 \S \m 150 -262 \C
    0 17 30 6 0 17 \S \ov 104 -366 32 32 \S \ov 44 -222 32 32 \S \ov
    59 -150 32 32 \S \ov 74 -78 32 32 \S \re 134 -78 32 32 \S \ov 104
    -222 32 32 \S \re 164 -222 32 32 \S \ov 74 -294 32 32 \S \re 134
    -294 32 32 \S \re 119 -150 32 32 \S
  \end{sdpgf} \right]
  \displaybreak[0]\\
  \label{Eq:rule8}
  \left[ \begin{sdpgf}{0}{0}{240}{-412}{0.2pt}
    \m 120 -407 \L 0 41 \S \m 79 -267 \C -13 13 -6 20 0 18 \L 0 20 \C
    0 20 -30 12 0 20 \L 0 20 \C 0 18 -15 16 0 18 \L 0 20 \C 0 16 15 15
    0 16 \S \m 64 -123 \C -13 13 -6 20 0 18 \L 0 20 \C 0 16 15 15 0 16
    \S \m 90 -46 \L 0 41 \S \m 139 -51 \C -12 12 -7 18 0 16 \S \m 161
    -51 \C 14 14 -25 17 0 15 \S \m 146 -123 \C 17 17 32 32 0 2 \L 0 20
    \C 0 16 -15 15 0 16 \S \m 195 -118 \C 0 19 30 8 0 19 \L 0 20 \C 0
    16 -15 15 0 16 \S \m 109 -339 \C -12 12 -7 17 0 16 \S \m 131 -339
    \C 12 12 7 17 0 16 \S \m 101 -267 \C 4 4 0 26 0 15 \S \m 94 -195
    \C -12 12 -7 17 0 16 \S \m 86 -123 \C 4 4 0 26 0 15 \S \m 124 -123
    \C -20 20 46 10 0 15 \S \m 116 -195 \C 12 12 7 17 0 16 \S \m 165
    -190 \C 0 17 30 6 0 17 \S \m 150 -262 \C 0 14 15 12 0 14 \S \ov
    104 -366 32 32 \S \ov 74 -294 32 32 \S \ov 59 -150 32 32 \S \re 74
    -78 32 32 \S \ov 134 -78 32 32 \S \ov 119 -150 32 32 \S \re 179
    -150 32 32 \S \re 134 -294 32 32 \S \ov 89 -222 32 32 \S \re 149
    -222 32 32 \S
  \end{sdpgf} \right]
  \stackrel{\eqref{Eq:rule3}}{\equiv}
  \left[ \begin{sdpgf}{0}{0}{240}{-412}{0.2pt}
    \m 120 -407 \L 0 41 \S \m 79 -267 \C -13 13 -6 20 0 18 \L 0 20 \C
    0 18 -15 16 0 18 \L 0 20 \C 0 20 -30 12 0 20 \L 0 20 \C 0 16 15 15
    0 16 \S \m 79 -123 \C -15 15 -19 20 0 16 \L 0 20 \C 0 16 15 15 0
    16 \S \m 90 -46 \L 0 41 \S \m 139 -51 \C -12 12 -7 18 0 16 \S \m
    161 -51 \C 14 14 -25 17 0 15 \S \m 199 -51 \C -12 12 -7 18 0 16 \S
    \m 221 -51 \C 14 14 -25 17 0 15 \S \m 109 -339 \C -12 12 -7 17 0
    16 \S \m 131 -339 \C 12 12 7 17 0 16 \S \m 101 -267 \C 4 4 0 26 0
    15 \S \m 94 -195 \C -4 4 0 26 0 15 \S \m 101 -123 \C 14 14 -25 16
    0 15 \S \m 150 -118 \L 0 40 \S \m 165 -190 \C 0 19 30 8 0 19 \L 0
    20 \C 0 16 15 14 0 16 \S \m 150 -262 \C 0 14 15 12 0 14 \S \m 116
    -195 \C 14 14 20 20 0 11 \S \ov 104 -366 32 32 \S \ov 74 -294 32
    32 \S \ov 74 -150 32 32 \S \re 74 -78 32 32 \S \ov 134 -78 32 32
    \S \ov 194 -78 32 32 \S \re 134 -294 32 32 \S \ov 89 -222 32 32 \S
    \re 134 -150 32 32 \S \re 149 -222 32 32 \S
  \end{sdpgf} \right]
  \stackrel{\eqref{Eq:rule2}}{\equiv}
  \left[ \begin{sdpgf}{0}{0}{240}{-412}{0.2pt}
    \m 120 -407 \L 0 41 \S \m 79 -267 \C -13 13 -6 20 0 18 \L 0 20 \C
    0 18 -15 16 0 18 \L 0 20 \C 0 20 -30 12 0 20 \L 0 20 \C 0 16 15 15
    0 16 \S \m 79 -123 \C -15 15 -19 20 0 16 \L 0 20 \C 0 16 15 15 0
    16 \S \m 79 -51 \C -14 14 25 17 0 15 \S \m 101 -51 \C 12 12 7 18 0
    16 \S \m 150 -46 \L 0 41 \S \m 199 -51 \C -12 12 -7 18 0 16 \S \m
    221 -51 \C 14 14 -25 17 0 15 \S \m 109 -339 \C -12 12 -7 17 0 16
    \S \m 131 -339 \C 12 12 7 17 0 16 \S \m 101 -267 \C 4 4 0 26 0 15
    \S \m 94 -195 \C -4 4 0 26 0 15 \S \m 101 -123 \C 14 14 -25 16 0
    15 \S \m 150 -118 \L 0 40 \S \m 165 -190 \C 0 19 30 8 0 19 \L 0 20
    \C 0 16 15 14 0 16 \S \m 150 -262 \C 0 14 15 12 0 14 \S \m 116
    -195 \C 14 14 20 20 0 11 \S \ov 104 -366 32 32 \S \ov 74 -294 32
    32 \S \ov 74 -150 32 32 \S \ov 74 -78 32 32 \S \re 134 -78 32 32
    \S \ov 194 -78 32 32 \S \re 134 -294 32 32 \S \ov 89 -222 32 32 \S
    \re 134 -150 32 32 \S \re 149 -222 32 32 \S
  \end{sdpgf} \right]
  \stackrel{\eqref{Eq:rule3}}{\equiv}
  \left[ \begin{sdpgf}{0}{0}{240}{-412}{0.2pt}
    \m 120 -407 \L 0 41 \S \m 79 -267 \C -13 13 -6 20 0 18 \L 0 20 \C
    0 20 -30 12 0 20 \L 0 20 \C 0 18 -15 16 0 18 \L 0 20 \C 0 16 15 15
    0 16 \S \m 64 -123 \C -13 13 -6 20 0 18 \L 0 20 \C 0 16 15 15 0 16
    \S \m 79 -51 \C -14 14 25 17 0 15 \S \m 101 -51 \C 12 12 7 18 0 16
    \S \m 150 -46 \L 0 41 \S \m 146 -123 \C 17 17 32 32 0 2 \L 0 20 \C
    0 16 -15 15 0 16 \S \m 195 -118 \C 0 19 30 8 0 19 \L 0 20 \C 0 16
    -15 15 0 16 \S \m 109 -339 \C -12 12 -7 17 0 16 \S \m 131 -339 \C
    12 12 7 17 0 16 \S \m 101 -267 \C 4 4 0 26 0 15 \S \m 94 -195 \C
    -12 12 -7 17 0 16 \S \m 86 -123 \C 4 4 0 26 0 15 \S \m 124 -123 \C
    -20 20 46 10 0 15 \S \m 116 -195 \C 12 12 7 17 0 16 \S \m 165 -190
    \C 0 17 30 6 0 17 \S \m 150 -262 \C 0 14 15 12 0 14 \S \ov 104
    -366 32 32 \S \ov 74 -294 32 32 \S \ov 59 -150 32 32 \S \ov 74 -78
    32 32 \S \re 134 -78 32 32 \S \ov 119 -150 32 32 \S \re 179 -150
    32 32 \S \re 134 -294 32 32 \S \ov 89 -222 32 32 \S \re 149 -222
    32 32 \S
  \end{sdpgf} \right]
  \displaybreak[0]\\
  \label{Eq:rule9}
  \left[ \begin{sdpgf}{0}{0}{210}{-412}{0.2pt}
    \m 105 -407 \L 0 41 \S \m 64 -267 \C -13 13 -6 20 0 18 \L 0 20 \C
    0 18 -15 16 0 18 \L 0 20 \C 0 18 -15 16 0 18 \L 0 20 \C 0 16 15 15
    0 16 \S \m 49 -51 \C -14 14 25 17 0 15 \S \m 71 -51 \C 12 12 7 18
    0 16 \S \m 120 -46 \L 0 41 \S \m 169 -51 \C -12 12 -7 18 0 16 \S
    \m 191 -51 \C 14 14 -25 17 0 15 \S \m 94 -339 \C -12 12 -7 17 0 16
    \S \m 116 -339 \C 12 12 7 17 0 16 \S \m 86 -267 \C 4 4 0 26 0 15
    \S \m 79 -195 \C -13 13 -6 20 0 18 \L 0 20 \L 0 46 \S \m 105 -118
    \C 0 14 15 12 0 14 \S \m 165 -118 \C 0 14 15 12 0 14 \S \m 135
    -262 \C 0 14 15 12 0 14 \S \m 101 -195 \C 4 4 0 26 0 15 \S \m 150
    -190 \C 0 14 15 12 0 14 \S \ov 89 -366 32 32 \S \ov 59 -294 32 32
    \S \ov 44 -78 32 32 \S \re 104 -78 32 32 \S \ov 164 -78 32 32 \S
    \re 119 -294 32 32 \S \ov 74 -222 32 32 \S \re 89 -150 32 32 \S
    \re 149 -150 32 32 \S \re 134 -222 32 32 \S
  \end{sdpgf} \right]
  \stackrel{\eqref{Eq:rule1}}{\equiv}
  \left[ \begin{sdpgf}{0}{0}{210}{-412}{0.2pt}
    \m 105 -407 \L 0 41 \S \m 75 -262 \C 0 19 -30 8 0 19 \L 0 20 \C 0
    18 -15 16 0 18 \L 0 20 \C 0 18 -15 16 0 18 \L 0 20 \C 0 16 15 15 0
    16 \S \m 49 -51 \C -14 14 25 17 0 15 \S \m 71 -51 \C 12 12 7 18 0
    16 \S \m 120 -46 \L 0 41 \S \m 169 -51 \C -12 12 -7 18 0 16 \S \m
    191 -51 \C 14 14 -25 17 0 15 \S \m 94 -339 \C -12 12 -7 17 0 16 \S
    \m 116 -339 \C 12 12 7 17 0 16 \S \m 79 -195 \C -13 13 -6 20 0 18
    \L 0 20 \L 0 46 \S \m 105 -118 \C 0 14 15 12 0 14 \S \m 165 -118
    \C 0 14 15 12 0 14 \S \m 124 -267 \C -14 14 -20 20 0 11 \S \m 146
    -267 \C 4 4 0 26 0 15 \S \m 101 -195 \C 4 4 0 26 0 15 \S \m 150
    -190 \C 0 14 15 12 0 14 \S \ov 89 -366 32 32 \S \re 59 -294 32 32
    \S \ov 44 -78 32 32 \S \re 104 -78 32 32 \S \ov 164 -78 32 32 \S
    \ov 119 -294 32 32 \S \ov 74 -222 32 32 \S \re 89 -150 32 32 \S
    \re 149 -150 32 32 \S \re 134 -222 32 32 \S
  \end{sdpgf} \right]
  \stackrel{\eqref{Eq:rule5}}{\equiv}
  \left[ \begin{sdpgf}{0}{0}{210}{-412}{0.2pt}
    \m 105 -407 \L 0 41 \S \m 75 -262 \C 0 19 -30 8 0 19 \L 0 20 \C 0
    18 -15 16 0 18 \L 0 20 \C 0 18 -15 16 0 18 \L 0 20 \C 0 16 15 15 0
    16 \S \m 49 -51 \C -14 14 25 17 0 15 \S \m 71 -51 \C 12 12 7 18 0
    16 \S \m 109 -51 \C -14 14 25 17 0 15 \S \m 131 -51 \C 12 12 7 18
    0 16 \S \m 180 -46 \L 0 41 \S \m 94 -339 \C -12 12 -7 17 0 16 \S
    \m 116 -339 \C 12 12 7 17 0 16 \S \m 79 -195 \C -13 13 -6 20 0 18
    \L 0 20 \L 0 46 \S \m 105 -118 \C 0 14 15 12 0 14 \S \m 165 -118
    \C 0 14 15 12 0 14 \S \m 124 -267 \C -14 14 -20 20 0 11 \S \m 146
    -267 \C 4 4 0 26 0 15 \S \m 101 -195 \C 4 4 0 26 0 15 \S \m 150
    -190 \C 0 14 15 12 0 14 \S \ov 89 -366 32 32 \S \re 59 -294 32 32
    \S \ov 44 -78 32 32 \S \ov 104 -78 32 32 \S \re 164 -78 32 32 \S
    \ov 119 -294 32 32 \S \ov 74 -222 32 32 \S \re 89 -150 32 32 \S
    \re 149 -150 32 32 \S \re 134 -222 32 32 \S
  \end{sdpgf} \right]
  \stackrel{\eqref{Eq:rule1}}{\equiv}
  \left[ \begin{sdpgf}{0}{0}{210}{-412}{0.2pt}
    \m 105 -407 \L 0 41 \S \m 64 -267 \C -13 13 -6 20 0 18 \L 0 20 \C
    0 18 -15 16 0 18 \L 0 20 \C 0 18 -15 16 0 18 \L 0 20 \C 0 16 15 15
    0 16 \S \m 49 -51 \C -14 14 25 17 0 15 \S \m 71 -51 \C 12 12 7 18
    0 16 \S \m 109 -51 \C -14 14 25 17 0 15 \S \m 131 -51 \C 12 12 7
    18 0 16 \S \m 180 -46 \L 0 41 \S \m 94 -339 \C -12 12 -7 17 0 16
    \S \m 116 -339 \C 12 12 7 17 0 16 \S \m 86 -267 \C 4 4 0 26 0 15
    \S \m 79 -195 \C -13 13 -6 20 0 18 \L 0 20 \L 0 46 \S \m 105 -118
    \C 0 14 15 12 0 14 \S \m 165 -118 \C 0 14 15 12 0 14 \S \m 135
    -262 \C 0 14 15 12 0 14 \S \m 101 -195 \C 4 4 0 26 0 15 \S \m 150
    -190 \C 0 14 15 12 0 14 \S \ov 89 -366 32 32 \S \ov 59 -294 32 32
    \S \ov 44 -78 32 32 \S \ov 104 -78 32 32 \S \re 164 -78 32 32 \S
    \re 119 -294 32 32 \S \ov 74 -222 32 32 \S \re 89 -150 32 32 \S
    \re 149 -150 32 32 \S \re 134 -222 32 32 \S
  \end{sdpgf} \right]
  \displaybreak[0]\\
  \label{Eq:rule10}
  \left[ \begin{sdpgf}{0}{0}{210}{-412}{0.2pt}
    \m 105 -407 \L 0 41 \S \m 19 -51 \C -14 14 25 17 0 15 \S \m 41 -51
    \C 12 12 7 18 0 16 \S \m 79 -123 \C -4 4 0 30 0 17 \L 0 20 \C 0 16
    15 15 0 16 \S \m 120 -46 \L 0 41 \S \m 169 -51 \C -12 12 -7 18 0
    16 \S \m 191 -51 \C 14 14 -25 17 0 15 \S \m 94 -339 \C -12 12 -7
    17 0 16 \S \m 116 -339 \C 12 12 7 17 0 16 \S \m 64 -267 \C -13 13
    -6 20 0 18 \L 0 20 \L 0 52 \L 0 20 \C 0 16 -15 14 0 16 \S \m 90
    -190 \L 0 40 \S \m 101 -123 \C 12 12 7 17 0 16 \S \m 150 -118 \C 0
    17 30 6 0 17 \S \m 86 -267 \C 4 4 0 26 0 15 \S \m 135 -262 \C 0 14
    15 12 0 14 \S \m 150 -190 \L 0 40 \S \ov 89 -366 32 32 \S \ov 14
    -78 32 32 \S \ov 74 -150 32 32 \S \re 104 -78 32 32 \S \ov 164 -78
    32 32 \S \ov 59 -294 32 32 \S \re 119 -294 32 32 \S \re 74 -222 32
    32 \S \re 134 -150 32 32 \S \re 134 -222 32 32 \S
  \end{sdpgf} \right]
  \stackrel{\eqref{Eq:rule5}}{\equiv}
  \left[ \begin{sdpgf}{0}{0}{240}{-412}{0.2pt}
    \m 120 -407 \L 0 41 \S \m 19 -51 \C -20 20 46 10 0 16 \S \m 41 -51
    \C 14 14 20 20 0 12 \S \m 90 -46 \C 0 14 15 13 0 14 \S \m 150 -46
    \C 0 14 -15 13 0 14 \S \m 199 -51 \C -14 14 -20 20 0 12 \S \m 221
    -51 \C 20 20 -46 10 0 16 \S \m 109 -339 \C -13 13 -6 20 0 18 \L 0
    20 \L 0 46 \S \m 131 -339 \C 4 4 0 26 0 15 \S \m 79 -195 \C -13 13
    -6 20 0 18 \L 0 20 \C 0 19 -30 8 0 19 \S \m 105 -118 \C 0 14 -15
    12 0 14 \S \m 154 -123 \C -4 4 0 26 0 15 \S \m 176 -123 \C 14 14
    20 20 0 11 \S \m 101 -195 \C 4 4 0 26 0 15 \S \m 135 -262 \C 0 14
    15 12 0 14 \S \m 150 -190 \C 0 14 15 12 0 14 \S \ov 104 -366 32 32
    \S \ov 14 -78 32 32 \S \re 74 -78 32 32 \S \re 134 -78 32 32 \S
    \ov 194 -78 32 32 \S \ov 74 -222 32 32 \S \re 119 -294 32 32 \S
    \re 89 -150 32 32 \S \ov 149 -150 32 32 \S \re 134 -222 32 32 \S
  \end{sdpgf} \right]
  \stackrel{\eqref{Eq:rule1}}{\equiv}
  \left[ \begin{sdpgf}{0}{0}{240}{-412}{0.2pt}
    \m 120 -407 \L 0 41 \S \m 19 -51 \C -20 20 46 10 0 16 \S \m 41 -51
    \C 14 14 20 20 0 12 \S \m 90 -46 \C 0 14 15 13 0 14 \S \m 139 -51
    \C -4 4 0 27 0 15 \S \m 161 -51 \C 4 4 0 27 0 15 \S \m 210 -46 \C
    0 14 -15 13 0 14 \S \m 109 -339 \C -13 13 -6 20 0 18 \L 0 20 \L 0
    46 \S \m 131 -339 \C 4 4 0 26 0 15 \S \m 79 -195 \C -13 13 -6 20 0
    18 \L 0 20 \C 0 19 -30 8 0 19 \S \m 105 -118 \C 0 14 -15 12 0 14
    \S \m 154 -123 \C -4 4 0 26 0 15 \S \m 176 -123 \C 14 14 20 20 0
    11 \S \m 101 -195 \C 4 4 0 26 0 15 \S \m 135 -262 \C 0 14 15 12 0
    14 \S \m 150 -190 \C 0 14 15 12 0 14 \S \ov 104 -366 32 32 \S \ov
    14 -78 32 32 \S \re 74 -78 32 32 \S \ov 134 -78 32 32 \S \re 194
    -78 32 32 \S \ov 74 -222 32 32 \S \re 119 -294 32 32 \S \re 89
    -150 32 32 \S \ov 149 -150 32 32 \S \re 134 -222 32 32 \S
  \end{sdpgf} \right]
  \stackrel{\eqref{Eq:rule5}}{\equiv}
  \left[ \begin{sdpgf}{0}{0}{210}{-412}{0.2pt}
    \m 105 -407 \L 0 41 \S \m 19 -51 \C -14 14 25 17 0 15 \S \m 41 -51
    \C 12 12 7 18 0 16 \S \m 79 -123 \C -4 4 0 30 0 17 \L 0 20 \C 0 16
    15 15 0 16 \S \m 109 -51 \C -14 14 25 17 0 15 \S \m 131 -51 \C 12
    12 7 18 0 16 \S \m 180 -46 \L 0 41 \S \m 94 -339 \C -12 12 -7 17 0
    16 \S \m 116 -339 \C 12 12 7 17 0 16 \S \m 64 -267 \C -13 13 -6 20
    0 18 \L 0 20 \L 0 52 \L 0 20 \C 0 16 -15 14 0 16 \S \m 90 -190 \L
    0 40 \S \m 101 -123 \C 12 12 7 17 0 16 \S \m 150 -118 \C 0 17 30 6
    0 17 \S \m 86 -267 \C 4 4 0 26 0 15 \S \m 135 -262 \C 0 14 15 12 0
    14 \S \m 150 -190 \L 0 40 \S \ov 89 -366 32 32 \S \ov 14 -78 32 32
    \S \ov 74 -150 32 32 \S \ov 104 -78 32 32 \S \re 164 -78 32 32 \S
    \ov 59 -294 32 32 \S \re 119 -294 32 32 \S \re 74 -222 32 32 \S
    \re 134 -150 32 32 \S \re 134 -222 32 32 \S
  \end{sdpgf} \right]
\end{gather}
And so on\dots\ When we stopped it, the program had $1$ rule
\eqref{Eq:rule1} of order~(number of vertices) $3$, $1$ rule
\eqref{Eq:rule2} of order~$5$, $1$ rule \eqref{Eq:rule3} of order~$7$,
$2$ rules (\ref{Eq:rule4},\ref{Eq:rule5}) of order~$8$, $1$ rule
\eqref{Eq:rule6} of order~$9$, $4$ rules
(\ref{Eq:rule7}--\ref{Eq:rule10}) of order~$10$, $7$ rules of
order~$11$, $12$ rules of order~$12$, $19$ rules of order~$13$, and $38$
rules of order~$14$. Besides those $85$ ambiguities that had given rise
to new rules, $280$ had turned out to be resolvable and $22417$ had
still not been processed; obviously the program wasn't going to
finish anytime soon, and it's a fair guess that the complete
rewriting system it sought to compute is in fact infinite. Certainly
\eqref{Eq:rule1}, \eqref{Eq:rule2}, \eqref{Eq:rule3}, and
\eqref{Eq:rule6} look suspiciously like the beginning of an infinite
family of rules, and indeed the expected sequence with one tower of
$\mathsf{m}$'s and another tower of $\mathsf{a}$'s continues for as long as we have
run the computations.


What is now our next step, when automated deduction has failed to
deliver a complete answer? One approach is to try to guess the
general pattern for these rules, and from that construct a provably
complete rewriting system; we shall return to that problem in a later
article.
Right here and now, it is however possible to wash out several pieces
of hard information even from the incomplete rewriting system
presented above.

\subsection{Hilbert series and formal languages}
\label{Ssec:Hilbert}

A useful observation about the hom-associativity axiom
\eqref{Eq:rule1} is that it is homogeneous in pretty much every sense
imaginable: there are the same number of $\mathsf{m}$'s in the left and
right hand sides, there are the same number of $\mathsf{a}$'s in the left
and right hand sides, and the inputs are all at the same height in
the left as in the right hand sides. (The last is not even true for the
ordinary associativity rule~\eqref{Eq:associativitet}, so from a very
abstract symbolic point of view, hom-associativity may actually be
regarded as a homogenised form of ordinary associativity.) It is a
well-known principle in Gr\"obner basis calculations that CPC
procedures working on homogeneous rewriting systems \emph{only generates
homogeneous rules} and \emph{never derives smaller rules from larger
ones}; once the procedure has processed all ambiguities up to a
particular order, one knows for sure that no more rules of that order
remain to be discovered. Hence the ten rules shown above are all
there are of order $10$ or less, and since no advanceable map of those
used for Theorem~\ref{S:ODL} can reduce the order, it follows that
those rules do effectively describe $\mathcal{H\mkern-4muA}\mkern-1mu\mathit{ss}$ up to order~$10$.
There is of course nothing special about order~$10$, so we may state
these observations more formally as follows.

\begin{lemma} \label{L:ApproxHAss}
  Let $Y_{k,\ell}$ be the subset of $\mc{Y}_\Omega$ whose elements contain
  exactly $k$ vertices $\mathsf{a}$ and exactly $\ell$ vertices $\mathsf{m}$; it
  follows that \(\mc{Y}_\Omega(n) = \bigcup_{k=0}^\infty Y_{k,n-1}\).
  Let $S_{k,\ell}$ be the set of rules the CPC procedure has generated
  from $(\mathsf{m}\mathsf{a}1\mathsf{m}23,\mathsf{m}\mathsf{m}12\mathsf{a}3)$ after processing all
  ambiguities at sites in \(\bigcup_{i=0}^k \bigcup_{j=0}^\ell Y_{i,j}\)
  but no ambiguities with sites outside this set. Let
  \(S = \bigcup_{k,\ell\in\mathbb{N}} S_{k,\ell}\). Then the following holds:
  \begin{enumerate}
    \item
      \(S_{1,2} = \bigl\{ (\mathsf{m}\mathsf{a}1\mathsf{m}23,\mathsf{m}\mathsf{m}12\mathsf{a}3) \bigr\}\).
    \item
      \(S_{k,\ell} \subseteq S_{k+1,\ell}\) and \(S_{k,\ell} \subseteq
      S_{k,\ell+1}\) for all \(k,\ell \in \mathbb{N}\).
    \item \label{Monotonicitet}
      \(\mathrm{Irr}(S_{k,\ell}) \supseteq \mathrm{Irr}(S_{k+1,\ell})\) and
      \(\mathrm{Irr}(S_{k,\ell}) \supseteq \mathrm{Irr}(S_{k,\ell+1})\) for all
      \(k,\ell \in \mathbb{N}\).
    \item
      All ambiguities of $S$ are resolvable.
    \item \label{Konvergens}
      \(\mathrm{Irr}(S) \cap Y_{k,\ell} = \mathrm{Irr}(S_{k,\ell}) \cap Y_{k,\ell}\).
    \item
      Every element of $Y_{k,\ell}$ has a unique normal form modulo
      $S_{i,j}$, for all \(i \geqslant k\) and \(j \geqslant \ell\).
  \end{enumerate}
\end{lemma}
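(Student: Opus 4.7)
The backbone of my plan is a single \emph{bi-homogeneity} observation: the starting rule $(\mathsf{m}\mathsf{a}1\mathsf{m}23,\mathsf{m}\mathsf{m}12\mathsf{a}3)$ has its LHS and RHS in the same class $Y_{1,2}$, and the CPC procedure preserves this property throughout. I would first prove, by induction on the stages of the CPC procedure, that for every rule $s$ it generates, $\mu_s$ and every monomial occurring in $a_s$ lie in a common class $Y_{k',\ell'}$. The induction step traces how a new rule is built out of the two normal forms of the reducts $t_1,t_2$ of an ambiguity site $\mu\in Y_{k',\ell'}$: by inductive hypothesis every simple reduction preserves the pair $(k',\ell')$, so $t_1$, $t_2$ and their normal forms all live in $Y_{k',\ell'}$, and hence so does the new rule produced from their difference.

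Parts (1)--(3) then come cheaply. For (1), any non-trivial $V$-critical ambiguity of the single initial rule requires a site $\mu$ in which two copies of $\mathsf{m}\mathsf{a}1\mathsf{m}23$ embed with at least one shared vertex and together cover $\mu$; since each copy has three vertices, a site in $Y_{1,2}$ (which consists of three-vertex trees only) forces the two embeddings to coincide, yielding a trivially resolvable ambiguity and hence no new rule. Parts (2) and (3) are immediate from the CPC procedure being monotone in the set of processed ambiguity sites: more processed sites yield at least as many rules, and a larger rule set has at most as many irreducibles.

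The same bi-homogeneity carries the argument for (4) and (5). An ambiguity of $S$ with site $\mu\in Y_{k',\ell'}$ must involve rules $s_1,s_2$ whose LHS's embed into $\mu$, forcing $\mu_{s_1},\mu_{s_2}\in\bigcup_{i\leq k',j\leq\ell'}Y_{i,j}$. Both rules are therefore already present in $S_{k',\ell'}$, and the CPC step that processed this very site either found the ambiguity resolvable within $S_{k',\ell'}$ or added a rule to $S_{k',\ell'}$ that makes it so; either way it is resolvable within $S_{k',\ell'}\subseteq S$, giving (4). For (5), the inclusion $\mathrm{Irr}(S)\cap Y_{k,\ell}\subseteq\mathrm{Irr}(S_{k,\ell})\cap Y_{k,\ell}$ is immediate from $S_{k,\ell}\subseteq S$, and the reverse inclusion reuses the same counting argument: any rule in $S$ that can be applied to an element of $Y_{k,\ell}$ has its LHS in $\bigcup_{i\leq k,j\leq\ell}Y_{i,j}$ and so already lies in $S_{k,\ell}$.

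Part (6) is where the main obstacle lies, since Theorem~\ref{S:ODL} applied naively to $S_{i,j}$ would demand resolvability of \emph{all} ambiguities of $S_{i,j}$, whereas $S_{i,j}$ in general still has unprocessed ambiguities at sites beyond $(i,j)$. I would bypass this by combining (4) with the homogeneity remark: using the lexicographic order on Polish notation introduced earlier as the well-founded partial order, Theorem~\ref{S:ODL} together with (4) gives a unique normal form modulo $S$ for every element of $\mc{R}\{\Omega\}$. Now for $a\in Y_{k,\ell}$ and $i\geq k$, $j\geq\ell$, the key point is that every simple reduction applicable to an element of $Y_{k,\ell}$ uses a rule whose LHS has $\mathsf{a}$- and $\mathsf{m}$-counts dominated by $(k,\ell)$, and such a rule already lies in $S_{k,\ell}\subseteq S_{i,j}\subseteq S$; moreover the reduction preserves membership in $Y_{k,\ell}$ by homogeneity. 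Hence the reduction relations of $S$, $S_{i,j}$ and $S_{k,\ell}$ all agree on $Y_{k,\ell}$, so the unique $S$-normal form of $a$ serves equally as its unique $S_{i,j}$-normal form, proving (6).
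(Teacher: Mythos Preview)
Your proposal is correct and follows the same approach the paper takes: the paragraph immediately preceding the lemma invokes the ``well-known principle in Gr\"obner basis calculations that CPC procedures working on homogeneous rewriting systems only generate homogeneous rules and never derive smaller rules from larger ones,'' and your bi-homogeneity argument is precisely the fleshing-out of that principle, which the paper leaves implicit (no formal proof is given beyond that remark).
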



To finish off, we shall apply a bit of formal
language theory to compute the beginning of \index{Hilbert series for hom-associative operad}  the \emph{Hilbert series}
of $\mathcal{H\mkern-4muA}\mkern-1mu\mathit{ss}$. The kind of information encoded in this is, just
like the \(\dim \mathcal{A}\mkern-1mu\mathit{ss}(n) = n!\) result mentioned above, basically
the numbers of dimensions of the various components of the operad,
although in the case of $\mathcal{H\mkern-4muA}\mkern-1mu\mathit{ss}$ it is trivial to see that \(\dim
\mathcal{H\mkern-4muA}\mkern-1mu\mathit{ss}(n) = \infty\) for all \(n \geqslant 0\) since inserting
more $\mathsf{a}$'s into an expression does not change its arity. Instead
one should partition by both $\mathsf{a}$ and $\mathsf{m}$ to get
finite-dimensional components. Furthermore there is a rather boring
factorial factor which is due to the action of $\Sigma_n$, so we
restrict attention to plane monomials, factor out that factorial,
and define the Hilbert series of $\mathcal{H\mkern-4muA}\mkern-1mu\mathit{ss}$ to be the formal power
series
\begin{equation}
  H(a,m) = \sum_{i,j\in\mathbb{N}}
  \frac{ \bigl| Y_{i,j} \cap \mathrm{Irr}(S) \bigr| }{ (j+1)! } a^i m^j
  \text{.}
\end{equation}
Note that this is also the Hilbert series of the free hom-associative
algebra with one generator on which $f_\mathsf{a}$ acts freely. Indeed,
that algebra is preferably constructed as $\mc{R}\{\Omega'\}(0) \big/
\mc{I}(S)(0)$ where \(\Omega' = \bigl\{ \mathsf{m}({,}), \mathsf{a}(), \mathsf{x}
\bigr\}\), and since no rule in $S$ changes $\mathsf{x}$ in any way, it
follows that \(\mathrm{Irr}(S) \subseteq \mc{R}\{\Omega\}\) is in bijective
correspondence to \(\mathrm{Irr}(S)(0) \subseteq
\mc{R}\{\Omega'\}(0)\)---just put an $\mathsf{x}$ in every input! However,
if one prefers have the Hilbert series for the free algebra counting
$\mathsf{a}$ and $\mathsf{x}$ rather than $\mathsf{a}$ and $\mathsf{m}$, then it should
instead be stated as $x H(a,x)$, since there is always one $\mathsf{x}$
more in an element of $\mc{Y}_{\Omega'}(0)$ than there are $\mathsf{m}$'s.
Finally, the Hilbert series for the free hom-associative algebra with
$k$ generators $\mathsf{x}_1,\dotsc,\mathsf{x}_k$ is $kx H(a,kx)$, since there
for every constant symbol (which is what $x$ becomes the counting
variable for) are $k$ choices of what that symbol should be.

As approximations of $H(a,m)$, we furthermore define
\begin{equation}
  H_{k,\ell}(a,m) = \sum_{i,j\in\mathbb{N}}
  \frac{ \bigl| Y_{i,j} \cap \mathrm{Irr}(S_{k,\ell}) \bigr| }{ (j+1)! } a^i m^j
  \qquad\text{for all \(k,\ell\in\mathbb{N}\).}
\end{equation}
By claim~\ref{Monotonicitet} of Lemma~\ref{L:ApproxHAss}, \(H_{i,j}
\geqslant H_{k,\ell}\) coefficient by coefficient whenever \(i \leqslant
k\) and \(j \leqslant \ell\). By claim~\ref{Konvergens}, the coefficient
of $a^i m^j$ in $H(a,m)$ is equal to the coefficient in $H_{k,\ell}(a,m)$
whenever \(i \leqslant k\) and \(j \leqslant \ell\). Therefore, when one
wishes to compute the beginning of $H(a,m)$, one may alternatively
compute the beginning of $H_{k,\ell}(a,m)$ for sufficiently large $k$
and $\ell$.

To get an initial bound, let us first compute $H_{0,0}(a,m)$. From
the basic observation that a plane element of $\mc{Y}_\Omega$ is either
$\mathrm{id}$, $\mathsf{a}1 \circ \nu$ for some plane \(\nu \in \mc{Y}_\Omega\), or
$\mathsf{m}12 \circ \nu_1 \otimes \nu_2$ for some plane \(\nu_1,\nu_2 \in
\mc{Y}_\Omega\), it follows that the language\footnote{
  In formal language theory, a `language' is simply some set of the
  kind of objects being considered.
} $L$ of all plane elements of $\mc{Y}_\Omega$ satisfies the equation
\(L = \{\mathrm{id}\} \cup ( \mathsf{a}1 \circ L ) \cup ( \mathsf{m}12 \circ L \otimes L
)\), and consequently that $H_{0,0}$ satisfies
the functional equation
\begin{equation} \label{Eq1:H_{0,0}}
  H_{0,0}(a,m) = 1 + a H_{0,0}(a,m) + m H_{0,0}(a,m)^2;
\end{equation}
the details of this correspondence between combinatorial
constructions and functional equations can be found in for example
\cite[Ch.~1]{AnalyticCombinatorics}. Solving that equation
symbolically yields
\begin{equation}
  H_{0,0}(a,m) = \frac{ 1 - a - \sqrt{ (1-a)^2 - 4m } }{ 2m }
\end{equation}
and using Newton's generalised binomial theorem one can even get a
closed form formula for the coefficients:
\begin{align*}
  H_{0,0}(a,m) ={}&
  \frac{1}{2m} \Biggl( 1 - a - \sum_{n=0}^\infty
    \binom{\frac{1}{2}}{n} \bigl( (1-a)^2 \bigr)^{\frac{1}{2}-n}
    (-4m)^n \Biggr) = \\ ={}&
  - \frac{1}{2m} \sum_{n=1}^\infty
    \binom{\frac{1}{2}}{n} (1-a)^{1-2n}
    (-4m)^n \,
  \stackrel{(\ell = n-1)}{=} \displaybreak[0]\\={}&
  2 \sum_{\ell=0}^\infty
    \binom{\frac{1}{2}}{\ell+1} (1-a)^{-2\ell-1} (-4m)^\ell
  = \displaybreak[0]\\ ={}&
  \sum_{\ell=0}^\infty \sum_{k=0}^\infty 2 \binom{\frac{1}{2}}{\ell+1}
    \binom{-2\ell-1}{k} (-a)^k (-4m)^\ell
  = \\ ={}&
  \sum_{k,\ell\in\mathbb{N}}
    \frac{1}{\ell+1} \binom{k+2\ell}{k,\ell,\ell} a^k m^\ell
  \text{.}
\end{align*}
As expected, the coefficients for \(\ell=0\) are all $1$ and the
coefficients for \(k=0\) are the Catalan numbers. These remain that
way in all $H_{k,\ell}$, but away from the axes the various rules makes
a difference. In order to determine how much, it is time to take some
rules into account.

For any finite set of rules, it is straightforward to set up a system
of equations for the language $L_0$ of plane monomials that are
reducible by at least one of these rules; in the case of $S_{1,2}$,
one such equation system is
\begin{subequations} \label{Eq1:H_{1,2}}
\begin{align}
  L_0 ={}& (\mathsf{a}1 \circ L_0 ) \cup (\mathsf{m}12 \circ L_0 \otimes L_1)
    \cup (\mathsf{m}12 \circ L_1 \otimes L_0) \cup
    (\mathsf{m}12 \circ L_2 \otimes L_3) \text{,}\\
  L_1 ={}& (\mathsf{m}12 \circ L_1 \otimes L_1) \cup (\mathsf{a}1 \circ L_1 )
    \cup \{\mathrm{id}\} \text{,}\\
  L_2 ={}& \mathsf{a}1 \circ L_1 \text{,}\\
  L_3 ={}& \mathsf{m}12 \circ L_1 \otimes L_1
\end{align}
\end{subequations}
(where we as usual consider operad composition of sets to denote the
sets of operad elements that can be produced by applying the
composition to elements of the given sets).
A more suggestive presentation might however be as the BNF grammar
\begin{align*}
  \meta{reducible} ::={}&
    \mathsf{a}\meta{reducible} \mid
    \mathsf{m}\meta{reducible}\meta{arbitrary} \mid
    \mathsf{m}\meta{arbitrary}\meta{reducible} \\ & \quad{} \mid
    \mathsf{m}\meta{left}\meta{right}\\
  \meta{arbitrary} ::={}& \mathsf{a}\meta{arbitrary} \mid
    \mathsf{m}\meta{arbitrary}\meta{arbitrary} \mid \Box_i\\
  \meta{left} ::={}& \mathsf{a}\meta{arbitrary}\\
  \meta{right} ::={}& \mathsf{m}\meta{arbitrary}\meta{arbitrary}
\end{align*}
whose informal interpretation is that a Polish term is
\meta{reducible} by $S_{1,2}$ if one of the children of the root node
is itself \meta{reducible}, or if the root node is an $\mathsf{m}$ whose
\meta{left} child is an $\mathsf{a}$ and whose \meta{right} child is an
$\mathsf{m}$. This can be trivially
extended to larger sets of rules by adding to the formula for $L_0$
one production for each new rule (describing the root of the $m _s$
of that rule) and one new variable (together with its defining
equation) for every internal edge in the $m _s$ of the new rule.
Hence if also taking \eqref{Eq:rule2} into account, the system grows
to
\begin{align*}
  L_0 ={}& (\mathsf{a}1 \circ L_0 ) \cup (\mathsf{m}12 \circ L_0 \otimes L_1)
    \cup (\mathsf{m}12 \circ L_1 \otimes L_0) \\ &\qquad{} \cup
    (\mathsf{m}12 \circ L_2 \otimes L_3) \cup
    (\mathsf{m}12 \circ L_4 \otimes L_5) \text{,}\\
  L_1 ={}& (\mathsf{m}12 \circ L_1 \otimes L_1) \cup (\mathsf{a}1 \circ L_1 )
    \cup \{\mathrm{id}\} \text{,}\\
  L_2 ={}& \mathsf{a}1 \circ L_1 \text{,}\\
  L_3 ={}& \mathsf{m}12 \circ L_1 \otimes L_1 \text{,}\\
  L_4 ={}& \mathsf{m}12 \circ L_1 \otimes L_6 \text{,}\\
  L_5 ={}& \mathsf{a}1 \circ L_7 \text{,}\\
  L_6 ={}& \mathsf{a}1 \circ L_1 \text{,}\\
  L_7 ={}& \mathsf{m}12 \circ L_1 \otimes L_1 \text{.}
\end{align*}
Smaller systems for the same $L_0$ are often possible (and can save
work in the next step), but here we are content with observing that
a finite system exists.

While the system \eqref{Eq1:H_{1,2}} is of the same general type as
the equation that was used to derive \eqref{Eq1:H_{0,0}}, it would
not be correct to simply convert it in the same way to an equation
system for $H_{1,2}$, since there is a qualitative difference: the
unions in \eqref{Eq1:H_{1,2}} are not in general disjoint, for
example because \(L_0 \subset L_1\) and thus \(\mathsf{m}12 \circ L_0
\otimes L_0 \subseteq \mathsf{m}12 \circ L_0 \otimes L_1, \mathsf{m}12 \circ L_1
\otimes L_0\). This may be possible to overcome through
inclusion--exclusion style combinatorics, but we would rather like to
attack this issue using tools from formal language theory. In the
terminology of~\cite{TATA}, an equation system such as
\eqref{Eq1:H_{1,2}} defines a nondeterministic finite top-down tree
automaton; it is finite because the set of states is $\{0,1,2,3\}$
(finite) and it is the nondeterminism that can cause the unions to
be non-disjoint. By the Subset Construction~\cite[Th.~1.1.9]{TATA}
however, there exists an equivalent deterministic finite bottom-up
tree automaton whose states are subsets of the set of top-down
states; moreover this bottom-up automaton may be regarded as an
$\Omega$-algebra $\bigl( A, \{f_x\}_{x\in\Omega} \bigr)$. In the case
of \eqref{Eq1:H_{1,2}}, this $\Omega$-algebra has
\begin{equation*}
  A = \bigl\{
  \{1\}, \{1,2\}, \{1,3\}, \{0,1,3\}, \{0,1,2\}
  \bigr\}
\end{equation*}
and operations given by the tables
\begin{center}
  \begin{tabular}{c|c|ccccc} 
    first&&\multicolumn{5}{l}{$f_\mathsf{m}$ when second operad is:}\\
    operand& $f_\mathsf{a}$&
      $\{1\}$& $\{1,2\}$& $\{1,3\}$& $\{0,1,3\}$& $\{0,1,2\}$\\
    \hline
    $\{1\}$& $\{1,2\}$&
      $\{1,3\}$& $\{1,3\}$& $\{1,3\}$& $\{0,1,3\}$& $\{0,1,3\}$\\
    $\{1,2\}$& $\{1,2\}$&
      $\{1,3\}$& $\{1,3\}$& $\{0,1,3\}$& $\{0,1,3\}$& $\{0,1,3\}$\\
    $\{1,3\}$& $\{1,2\}$&
      $\{1,3\}$& $\{1,3\}$& $\{1,3\}$& $\{0,1,3\}$& $\{0,1,3\}$\\
    $\{0,1,3\}$& $\{0,1,2\}$&
      $\{0,1,3\}$& $\{0,1,3\}$& $\{0,1,3\}$& $\{0,1,3\}$& $\{0,1,3\}$\\
    $\{0,1,2\}$& $\{0,1,2\}$&
      $\{0,1,3\}$& $\{0,1,3\}$& $\{0,1,3\}$& $\{0,1,3\}$& $\{0,1,3\}$
  \end{tabular}
\end{center}
When such an $\Omega$-algebra $\bigl( A, \{f_x\}_{x\in\Omega} \bigr)$
is given, the equation system of generating functions takes the form
\begin{multline}
  G_b(a,m) = a \!\sum_{\substack{c \in A\\f_\mathsf{a}(c)=b}}\! G_c(a,m) +
  m \!\!\!\sum_{\substack{c,d \in A\\f_\mathsf{m}(c,d)=b}}\!\!\!
    G_c(a,m) G_d(a,m) +
  \begin{cases}
    1& \text{if \(b = \{1\}\),}\\
    0 & \text{otherwise}
  \end{cases}
  \\\text{for all \(b \in A\)}
\end{multline}
where the extra term for \(b=\{1\}\) is because that is the state
that inputs are considered to be in. The generating function for
reducible plane monomials is the sum of all $G_b$ such that \(b \owns
0\), since $0$ was the top-down \meta{reducible} state, and
consequently the generating function for irreducible plane monomials
is the sum of all $G_b$ such that \(b \not\owns 0\). Thus we have
\begin{align*}
  H_{1,2}(a,m) ={}& G_{\{1\}}(a,m) + G_{\{1,2\}}(a,m) +
    G_{\{1,3\}}(a,m) \text{,}\\
  G_{\{1\}}(a,m) ={}& 1 \text{,}\\
  G_{\{1,2\}}(a,m) ={}& a H_{1,2}(a,m) \text{,}\\
  G_{\{1,3\}}(a,m) ={}& m H_{1,2}(a,m)^2 - m G_{\{1,2\}}(a,m)G_{\{1,3\}}(a,m)
\end{align*}
where the definition of $H_{1,2}(a,m)$ was used to shorten the last
two right hand sides a bit. Solving as above is still possible, but
results in the somewhat messier expression
\begin{align*}
  H_{1,2}(a,m) ={}& \frac{1 - a - am^2 - \sqrt{ (1 - a - am^2)^2 +
    4 (1 - am + a^2m) m } }{ 2 (1 - am + a^2m) m }
  = \\ ={}&
  \sum_{k=0}^\infty 2 \binom{\frac{1}{2}}{k+1} (1 - a - am^2)^{-1-2k}
    4^k (1 - am + a^2m)^k m^k = \dotsb
\end{align*}
which is probably not so important to put on closed form; the
interesting quantity is $H(a,m)$, and the terms in $H_{1,2}$ which
coincide with their counterparts in $H(a,m)$ can be determined by an
ansatz in the equation system already.

\begin{theorem}
  The Hilbert series $H(a,m)$ for the hom-associative operad
  $\mathcal{H\mkern-4muA}\mkern-1mu\mathit{ss}$ satisfies
  \begin{math}
    H(a,m) =
    1 + m + a + 2 m^2 + 3 a m + a^2 + 5 m^3 + 9 a m^2 + 6 a^2 m + a^3 +
      14 m^4 + 30 a m^3 + 26 a^2 m^2 + 10 a^3 m + a^4 + 42 m^5 +
      105 a m^4 + 110 a^2 m^3 + 60 a^3 m^2 + 15 a^4 m + a^5 + 132 m^6 +
      378 a m^5 + 465 a^2 m^4 + 315 a^3 m^3 + 120 a^4 m^2 + 21 a^5 m +
      a^6 + 429 m^7 + 1386 a m^6 + 1960 a^2 m^5 + 1575 a^3 m^4 +
      770 a^4 m^3 + 217 a^5 m^2 + 28 a^6 m + a^7 + 1430 m^8 + 5148 a m^7
      + 8232 a^2 m^6 + 7644 a^3 m^5 + 4494 a^4 m^4 + 1680 a^5 m^3 +
      364 a^6 m^2 + 36 a^7 m + a^8 + \dotsb
  \end{math}. In particular, the difference to the Hilbert series
  $H_{0,0}(a,m)$ for the free hom-algebra operad is
  \begin{multline*}
    H_{0,0}(a,m) - H(a,m) = \\
    \begin{array}[t]{r@{}l @{{}+{}} r@{}l @{{}+{}} r@{}l
      @{{}+{}} r@{}l @{{}+{}} r@{}l @{{}+{}} r@{}l}
    \multicolumn{1}{l}{=}
    & a m^2 & 4&a^2 m^2& 10& a^3 m^2& 20& a^4 m^2 & 35& a^5 m^2&
      56& a^6 m^2 +\\
    5&a m^3 & 30& a^2 m^3& 105& a^3 m^3& 280& a^4 m^3& 630& a^5 m^3\\
    21& a m^4& 165& a^2 m^4& 735& a^3 m^4 & 2436& a^4 m^4\\
    84& a m^5& 812& a^2 m^5& 4368& a^3 m^5&\\
    330&a m^6& 3780& a^2 m^6&\\
    1287& a m^7& \dotsb
    \end{array}
  \end{multline*}
\end{theorem}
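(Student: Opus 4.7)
The plan is to leverage the framework already developed in this section: by Lemma~\ref{L:ApproxHAss}, once all ambiguities with sites in $\bigcup_{i\leqslant k}\bigcup_{j\leqslant \ell} Y_{i,j}$ have been processed, the truncation of $H_{k,\ell}(a,m)$ to degree $\leqslant k$ in $a$ and $\leqslant \ell$ in $m$ coincides with that of $H(a,m)$. So to reach the claimed coefficients (degrees up to $8$ in $m$ and $8$ in $a$), I will first run the CPC program to an order where every ambiguity in $\bigcup_{i\leqslant 8}\bigcup_{j\leqslant 8} Y_{i,j}$ has been processed; this produces a finite list $S_{8,8}$ of rewrite rules extending the explicit ten rules (\ref{Eq:rule1})--(\ref{Eq:rule10}) exhibited above.

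With $S_{8,8}$ in hand, I will describe the set $L_0$ of plane monomials that are \emph{reducible} modulo $S_{8,8}$ by a nondeterministic top-down tree automaton, exactly in the style of the BNF grammar given in Subsection~\ref{Ssec:Hilbert}: one nonterminal $L_0$ for ``reducible'', one nonterminal $L_1$ for ``arbitrary plane monomial'', and for each new rule $s$ one extra nonterminal per internal edge of $\mu_s$, together with a fresh production for $L_0$ describing the root of $\mu_s$. Applying the Subset Construction \cite[Th.~1.1.9]{TATA} converts this into a deterministic bottom-up tree automaton, i.e.~an $\Omega$-algebra on a finite set $A$ of states in which the ``input'' state is distinguished and the ``reducible'' states form a subset $A_{\mathrm{red}}\subseteq A$. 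The set of plane irreducible monomials is then precisely the language accepted when the accepting states are $A\setminus A_{\mathrm{red}}$.

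From the $\Omega$-algebra structure on $A$ I will extract, for each $b\in A$, a generating function $G_b(a,m)$ counting plane monomials that evaluate to the state $b$; these satisfy the polynomial system
\begin{equation*}
  G_b(a,m) = a \!\!\!\sum_{\substack{c\in A\\ f_\mathsf{a}(c)=b}}\!\!\! G_c(a,m)
  + m \!\!\!\sum_{\substack{c,d\in A\\ f_\mathsf{m}(c,d)=b}}\!\!\! G_c(a,m) G_d(a,m)
  + \delta_{b,\text{input}}
  \text{.}
\end{equation*}
Summing over $b\in A\setminus A_{\mathrm{red}}$ gives $H_{8,8}(a,m)$, and expanding the resulting algebraic system as a formal power series in $(a,m)$ up to total degree $16$ yields all the coefficients listed in the theorem. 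Subtracting from the explicit closed form $H_{0,0}(a,m) = \bigl(1-a-\sqrt{(1-a)^2-4m}\bigr)/(2m) = \sum_{k,\ell} \frac{1}{\ell+1}\binom{k+2\ell}{k,\ell,\ell} a^k m^\ell$ established earlier in Subsection~\ref{Ssec:Hilbert} produces the tabulated differences; the pattern $\binom{k+1}{2}a^km^2$ in the $m^2$ column, the Catalan-shifted coefficients $\binom{2\ell}{\ell}-\binom{2\ell}{\ell-1}$ on the boundaries, and the zero columns for $m\leqslant 1$ and $a=0$ can all be cross-checked by hand against the equation system.

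The main obstacle will be bookkeeping rather than mathematics: the number of rules in $S_{8,8}$ appears to grow roughly exponentially with the order (the excerpt already reports $38$ rules at order $14$), so the deterministic automaton produced by the Subset Construction may have on the order of hundreds of states, and the polynomial system for $\{G_b\}_{b\in A}$ must be truncated and solved by iterated substitution modulo $(a,m)^{17}$. This is entirely routine for a computer algebra system, but the correctness of the truncation depends critically on having verified, via Lemma~\ref{L:ApproxHAss}, that every ambiguity at sites in $\bigcup_{i\leqslant 8}\bigcup_{j\leqslant 8}Y_{i,j}$ has indeed been inspected and either resolved or turned into one of the rules in $S_{8,8}$; ensuring this completeness audit is the delicate point, since a missed ambiguity would corrupt the coefficients at and beyond its order.
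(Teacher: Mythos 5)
Your overall method is the paper's own: invoke Lemma~\ref{L:ApproxHAss} to reduce the computation to a finite approximant $H_{k,\ell}$, describe the reducible plane monomials by a nondeterministic top-down tree automaton, determinise via the Subset Construction, and read off the coefficients from the resulting polynomial system for the $G_b$'s. However, there is a concrete misstep in the truncation you choose, and as written your first step would fail. The theorem only asserts the coefficients of $a^im^j$ for \emph{total} degree $i+j\leqslant 8$, not for all $i,j\leqslant 8$. By homogeneity of the rewriting system (a rule in $Y_{k,\ell}$ can only reduce monomials in $Y_{i,j}$ with $i\geqslant k$ and $j\geqslant\ell$), these coefficients are already determined by the rules of order $\leqslant 8$, i.e.\ by \eqref{Eq:rule1}--\eqref{Eq:rule5}, and the completeness audit you rightly worry about only has to cover ambiguities at sites of order $\leqslant 8$ --- which the reported run of the CPC program (all sites of order $\leqslant 10$, in fact up to $14$) already certifies. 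This is exactly what the paper does, working with $S_{5,3}\cup S_{4,4}$. Your plan instead calls for computing $S_{8,8}$, which requires processing every ambiguity at sites with up to $8$ symbols $\mathsf{a}$ \emph{and} up to $8$ symbols $\mathsf{m}$, i.e.\ order up to $16$; the paper's own experiment was stopped at order $14$ with over $22000$ ambiguities still unprocessed and with strong evidence that the complete system is infinite, so that computation is far out of reach and, for the stated theorem, unnecessary. Expanding ``up to total degree $16$'' is likewise not needed.

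Two minor points on your proposed cross-checks: the $m^2$ row of the difference table is $1,4,10,20,35,56=\binom{k+2}{3}$, not $\binom{k+1}{2}$; and while the boundary checks (Catalan numbers along $a^0$, coefficient $1$ along $m^0$, and the vanishing of the difference for $j\leqslant 1$ because no left-hand side fits into a monomial with fewer than two $\mathsf{m}$'s) are valid, they do not substitute for the completeness audit at order $\leqslant 8$, which remains the load-bearing step.
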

\begin{remark}
  The interpretation of for example the term $4368a^3m^5$ above is
  thus that imposing the hom-associativity identity \eqref{Eq:rule1}
  reduces by $4368$ the dimension of the space of plane operad
  elements that can be formed with $3$ operations $\alpha$ and $5$
  multiplications.
\end{remark}
\begin{proof}
  As shown above for $H_{1,2}$, but taking all of
  \eqref{Eq:rule1}--\eqref{Eq:rule5} into account, so that one
  instead considers $S_{5,3} \cup S_{4,4}$ and thus gets all terms of
  total degree $\leqslant 8$.
\end{proof}


\begin{acknowledgement}
This work was partially supported by The Swedish Foundation for International Cooperation
in Research and Higher Education (STINT), The Swedish Research Council,
The Royal Swedish Academy of Sciences, The Royal Physiographic Society in Lund, The Crafoord Foundation and The Letterstedtska
F{\"o}reningen.
\end{acknowledgement}

\mathversion{normal}

\message{c@mv@normal = \number\csname c@mv@normal\endcsname}
\message{c@mv@normal2 = \number\csname c@mv@normal2\endcsname}

\tracingstats=1

\end{document}